\newtheorem{theorem}{Theorem}[section]
\newtheorem{lemma}[theorem]{Lemma}
\newtheorem{corollary}[theorem]{Corollary}
\theoremstyle{definition}
\newtheorem{definition}[theorem]{Definition}
\theoremstyle{remark}
\newtheorem{remark}[theorem]{Remark}
\let\olddefin\definition
\renewcommand{\definition}{\olddefin\normalfont}
\let\oldlemma\lemma
\renewcommand{\lemma}{\oldlemma\normalfont}
\let\oldtheorem\theorem
\renewcommand{\theorem}{\oldtheorem\normalfont}
\let\oldcoro\corollary
\renewcommand{\corollary}{\oldcoro\normalfont}
\let\oldremark\remark
\renewcommand{\remark}{\oldremark\normalfont}
\let\oldexample\example
\renewcommand{\example}{\oldexample\normalfont}
\numberwithin{equation}{section}
\begin{document}

\title{Higher algebraic K-theory and tangent spaces to Chow groups}

\author{Sen Yang}
\address{Mathematics Department\\
Louisiana State University\\
Baton Rouge, Louisiana}
\email{senyangmath@gmail.com}

\maketitle

\begin{abstract}
This is a reproduction of my thesis [35]. By using higher K-theory(Chern character, cyclic homology, effacement theorem and etc), we provide an answer to a question by Green-Griffiths which says the tangent sequence to Bloch-Gersten-Quillen sequence is Cousin resolution.
\end{abstract}

\tableofcontents

\section{Introduction}
\label{Introduction}
In this paper, using higher algebraic K-theory, we provide an answer to the following question asked by Green-Griffiths in [10](more details are discussed in section 2):

\begin{quote}
Can one define the Bloch-Gersten-Quillen sequence $\mathcal{G}_j$ on 
infinitesimal neighborhoods $X_j = X \times Spec(k[t]/(t^{j+1})$
so that 
\[
 ker(\mathcal{G}_1 \to \mathcal{G}_0) =  \underline{\underline{T}}\mathcal{G}_0,
\]
\end{quote}
here $ \underline{\underline{T}}\mathcal{G}_0$ should be the 
Cousin resolution of $TK_{m}(\mathcal{O}_X)$ and $X$ is any $n$-dimensional smooth projective variety over  a field $k$, $chark=0$.

Our main results are as follows.

\begin{definition}
definition-theorem[7]

Let $T_{j}$ denote $Spec(k[t]/(t^{j+1})$ and $X_{j}$ denote the $j$-th infinitesimal thickening, $X \times T_{j}$, the Bloch-Quillen-Gersten sequence $\mathcal{G}_j$ on $X_{j}$ is defined to be the following flasque resolution($m$ can be any integer):
{\footnotesize
\begin{align*}
0 \to & K_{m}(O_{X_{j}}) \to K_{m}(k(X)_{j}) \to \bigoplus_{x_{j} \in X_{j} ^{(1)}}\underline{K}_{m-1}(O_{X_{j},x_{j}} \ on \ x_{j}) \to \dots \\
 & \dots \to \bigoplus_{x_{j} \in X_{j} ^{(n)}}\underline{K}_{m-n}(O_{X_{j},x_{j}} \ on \ x_{j}) \to 0.
\end{align*}
}
where $O_{X_{j}}=O_{X\times T_{j}}$,  $k(X)_{j}= k(X)\times T_{j}$, $x_{j}=x \times T_{j}$. $\underline{K}_{p}(O_{X_{j},x_{j}} \ on \ x_{j})$ is the flasque sheaf $j_{x_{j}\ast}K_{p}(O_{X_{j},x_{j}} \ on \ x_{j})$, where 
$j_{x_{j}}$ is the immersion $\{x_{j}\} \to X$.

\end{definition}

\begin{remark}
The existence of the sequence $\mathcal{G}_j$ and the fact of flasque resolution follows from [7]. We refer to theorem 3.6 on page 8 and corollary 3.12 on page 14.
\end{remark}

The answer to Green-Griffiths' question is positive. The following theorems are proved in section 5.
\begin{theorem}
 The formal tangent sequence to the Bloch-Gersten-Quillen 
sequence is the Cousin resolution. That is there exists the following commutative diagram(each column is a flasque resolution, $m$ can be any integer, $\varepsilon$ is a nilpotent with $\varepsilon^2=0$. ):
{\footnotesize
\[
  \begin{CD}
     0 @. 0 @. 0\\
     @VVV @VVV @VVV\\
     \Omega_{O_{X}/ \mathbb{Q}}^{\bullet} @<tan1<< K_{m}(O_{X[\varepsilon]}) @<<< K_{m}(O_{X}) \\
     @VVV @VVV @VVV\\
     \Omega_{k(X)/ \mathbb{Q}}^{\bullet} @<tan2<<  K_{m}(k(X)[\varepsilon]) @<<< K_{m}(k(X)) \\
     @VVV @VVV @VVV\\
     \oplus_{d \in X^{(1)}}\underline{H}_{d}^{1}(\Omega_{O_{X}/\mathbb{Q}}^{\bullet}) @<tan3<< \oplus_{d[\varepsilon]\in X[\varepsilon]^{(1)}}\underline{K}_{m-1}(O_{X,d}[\varepsilon] \ on \ d[\varepsilon]) @<<<  \oplus_{d \in X^{(1)}}\underline{K}_{m-1}(O_{X,d} \ on \ d)\\
     @VVV @VVV @VVV\\
     \oplus_{y \in X^{(2)}}\underline{H}_{y}^{2}(\Omega_{O_{X}/ \mathbb{Q}}^{\bullet}) @<tan4<< \oplus_{y[\varepsilon] \in X[\varepsilon]^{(2)}}\underline{K}_{m-2}(O_{X,y}[\varepsilon] \ on \ y[\varepsilon]) @<<< \oplus_{y \in X^{(2)}}\underline{K}_{m-2}(O_{X,y} \ on \ y) \\
     @VVV @VVV @VVV\\
      \dots @<tan<< \dots @<<< \dots \\ 
     @VVV @VVV @VVV\\
     \oplus_{x\in X^{(n)}}\underline{H}_{x}^{n}(\Omega_{O_{X}/ \mathbb{Q}}^{\bullet}) @<tan(n+2)<< \oplus_{x[\varepsilon]\in X[\varepsilon]^{(n)}}\underline{K}_{m-n}(O_{X,x}[\varepsilon] \ on \ x[\varepsilon]) @<<<  \oplus_{x \in X^{(n)}}\underline{K}_{m-n}(O_{X,x} \ on \ x) \\
     @VVV @VVV @VVV\\
      0 @. 0 @. 0
  \end{CD}
\]
}
where
\begin{equation}
\begin{cases}
 \begin{CD}
 \Omega_{O_{X}/ \mathbb{Q}}^{\bullet} = \Omega^{m-1}_{O_{X}/\mathbb{Q}}\oplus \Omega^{m-3}_{O_{X}/\mathbb{Q}}\oplus \dots \\
 \Omega_{k(X)/ \mathbb{Q}}^{\bullet} = \Omega^{m-1}_{k(X)/\mathbb{Q}}\oplus \Omega^{m-3}_{k(X)/\mathbb{Q}}\oplus \dots
\end{CD}
\end{cases}
\end{equation}

\end{theorem}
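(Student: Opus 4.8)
The plan is to exhibit the left-hand column as the ``tangent'' (relative) complex associated with the map from the middle column to the right-hand column, to compute its terms through the Chern character using Goodwillie's theorem, and then to match its boundary maps with the residue maps of the Cousin complex by naturality of the Chern character with respect to the localization/coniveau machinery.

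The first step is to analyze the comparison $\mathcal{G}_1 \to \mathcal{G}_0$. On every term it is induced by the surjection $O_X[\varepsilon] \to O_X$, $\varepsilon \mapsto 0$ --- on the terms with supports, by the base change $-\otimes_{O_{X,x}[\varepsilon]}O_{X,x}$, which carries complexes supported on $x[\varepsilon]$ to complexes supported on $x$ --- and it is split, naturally, by $O_X \hookrightarrow O_X[\varepsilon]$. Hence as complexes of sheaves on $X$ (using the homeomorphism $X[\varepsilon] \cong X$) we get a splitting $\mathcal{G}_1 \cong \mathcal{G}_0 \oplus \mathcal{K}^\bullet$, where $\mathcal{K}^\bullet$ has $p$-th term $\bigoplus_{x \in X^{(p)}} \underline{K}_{m-p}(O_{X,x}[\varepsilon]\ \text{on}\ x[\varepsilon],(\varepsilon))$, relative $K$-theory with supports. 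Since $\mathcal{G}_0$ and $\mathcal{G}_1$ are flasque resolutions (the definition-theorem recalled above, which rests on the effacement theorem of [7]) and $\mathcal{G}_1 \to \mathcal{G}_0$ is termwise split surjective, the short exact sequence of complexes $0 \to \mathcal{K}^\bullet \to \mathcal{G}_1 \to \mathcal{G}_0 \to 0$ together with its long exact sequence of cohomology sheaves shows that $\mathcal{K}^\bullet$ is termwise flasque (a direct summand of a flasque sheaf) and is a resolution of the relative sheaf $T\mathcal{K}_m := \ker(\mathcal{K}_m^{X[\varepsilon]} \to \mathcal{K}_m^X)$. This already yields the left-hand column, the exactness of all three rows (each row being, locally, the split tangent sequence $0 \to K_m(O_X) \to K_m(O_X[\varepsilon]) \to TK_m(O_X) \to 0$), and the fact that the left-hand column is a flasque resolution; it remains to identify $\mathcal{K}^\bullet$ with the Cousin resolution of $\Omega^\bullet_{O_X/\mathbb{Q}}$.

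The second step identifies the terms. By Goodwillie's theorem --- over $\mathbb{Q}$, for a nilpotent ideal $I \subset A$ one has $K_n(A,I) \cong HC_{n-1}(A,I)$, and the same holds with supports by applying this to the relevant localization squares --- together with the Hochschild--Kostant--Rosenberg computation of the cyclic homology of the dual numbers, one obtains for any $\mathbb{Q}$-algebra $R$ that is formally smooth over $\mathbb{Q}$ (in particular $R = O_{X,x}$ or $R = k(X)$) the isomorphism $TK_m(R) = K_m(R[\varepsilon],(\varepsilon)) \cong \bigoplus_{i \ge 0} \Omega^{m-1-2i}_{R/\mathbb{Q}}$, and, for $x$ of codimension $p$, $\underline{K}_{m-p}(O_{X,x}[\varepsilon]\ \text{on}\ x[\varepsilon],(\varepsilon)) \cong \bigoplus_{i \ge 0} \underline{H}^p_x(\Omega^{m-1-2i}_{O_X/\mathbb{Q}})$: passing to supports at a codimension-$p$ point turns the relative Hochschild complex into the local cohomology $\underline{H}^p_x$ of the forms --- concentrated in degree $p$ because each quasi-coherent sheaf $\Omega^j_{O_X/\mathbb{Q}}$ is Cohen--Macaulay of dimension $\dim O_{X,x}$, which follows from the smoothness of $X/k$ and the fundamental exact sequence $0 \to \Omega^1_{k/\mathbb{Q}}\otimes_k O_X \to \Omega^1_{O_X/\mathbb{Q}} \to \Omega^1_{O_X/k} \to 0$ --- and the shift $p$ introduced by the supports exactly cancels the $p$ in the index $m-p$. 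Applied to $O_{X,x}$ and $k(X)$ this identifies $T\mathcal{K}_m$ with the sheaf $\Omega^\bullet_{O_X/\mathbb{Q}}$, the generic term of $\mathcal{K}^\bullet$ with $\Omega^\bullet_{k(X)/\mathbb{Q}}$, and the $p$-th term with $\bigoplus_{x \in X^{(p)}} \underline{H}^p_x(\Omega^\bullet_{O_X/\mathbb{Q}})$ --- the terms of the left-hand column --- the maps $tan(p+2)$ being the projection onto the relative summand followed by these Chern-character isomorphisms.

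The last step matches the differentials. The relative Chern character $K(-[\varepsilon],(\varepsilon)) \to HC(-[\varepsilon],(\varepsilon))[1]$ is a natural transformation of presheaves of spectra on $X$ which, by Goodwillie, is an equivalence; the coniveau (support) filtration is functorial in the presheaf, so this equivalence induces an isomorphism between the Gersten/Bloch--Gersten--Quillen resolution of the sheaf $\pi_m K(-[\varepsilon],(\varepsilon))$, namely $\mathcal{K}^\bullet$, and the Gersten resolution of $\pi_m(HC(-[\varepsilon],(\varepsilon))[1]) = \Omega^\bullet_{O_X/\mathbb{Q}}$, which --- since this sheaf is Cohen--Macaulay and, rationally, the relative mixed complex is formal --- is exactly the Cousin resolution $\bigoplus_{i} \mathrm{Cousin}(\Omega^{m-1-2i}_{O_X/\mathbb{Q}}) = \mathrm{Cousin}(\Omega^\bullet_{O_X/\mathbb{Q}})$. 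Equivalently, the tangent parts of the Bloch--Gersten--Quillen boundary maps --- ``residues in $K$-theory'' --- are carried by the Chern character to the classical residue maps of the Cousin complex of absolute Kähler differentials. I expect this compatibility to be the main obstacle: one must construct the relative Chern character at the level of presheaves of spectra (equivalently, of mixed complexes) compatibly with all the localization sequences that define both the Bloch--Gersten--Quillen complex and the Cousin complex, and carefully track supports, the Hochschild--Kostant--Rosenberg weight decomposition, and the degree shifts through the coniveau filtration; it is this bookkeeping, rather than any single computation, where the real work lies.
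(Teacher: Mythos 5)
Your proposal is correct and follows essentially the same route as the paper: the effacement theorem of Colliot-Th\'el\`ene--Hoobler--Kahn for the exactness of the columns, the relative Chern character of Goodwillie (in the spectrum-level form of Corti\~nas--Haesemeyer--Weibel) to identify relative $K$-theory with supports with relative negative cyclic homology with supports, the computation of $HN_*(R[\varepsilon],\varepsilon)$ as a sum of absolute K\"ahler differentials, and the depth argument concentrating the local hypercohomology in degree $p$ at a codimension-$p$ point. The only cosmetic difference is the order of operations --- you split off the relative subcomplex $\mathcal{K}^\bullet$ first and then apply the relative Chern character, whereas the paper applies the Chern character to the whole sequence for $X[\varepsilon]$ and then projects onto the relative summand ($tan_i = Pr_i\circ Ch$) --- and these agree since the Chern character commutes with the augmentation $\varepsilon=0$.
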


Adams operations $\psi^{k}$ can decompose the above diagram into eigen-components. To be precise, let $K^{(i)}_{m}(O_{X,x} \ on \ x)$ denote the eigen-space of $\psi^{k} = k^{i}$, we have the following refiner result.
\begin{theorem}
 There exists the following  commutative diagram((each column is a flasque resolution, $m$ can be any integer.):
{\footnotesize
\[
  \begin{CD}
     0 @. 0 @. 0\\
     @VVV @VVV @VVV\\
     \Omega_{O_{X}/ \mathbb{Q}}^{\bullet,(i)} @<tan1<< K^{(i)}_{m}(O_{X[\varepsilon]}) @<<< K^{(i)}_{m}(O_{X}) \\
     @VVV @VVV @VVV\\
     \Omega_{k(X)/ \mathbb{Q}}^{\bullet,(i)} @<tan2<<  K^{(i)}_{m}(k(X)[\varepsilon]) @<<< K^{(i)}_{m}(k(X)) \\
     @VVV @VVV @VVV\\
     \oplus_{d \in X^{(1)}}\underline{H}_{d}^{1}(\Omega_{O_{X}/\mathbb{Q}}^{\bullet,(i)}) @<tan3<< \oplus_{d[\varepsilon]\in X[\varepsilon]^{(1)}}\underline{K}^{(i)}_{m-1}(O_{X,d}[\varepsilon] \ on \ d[\varepsilon]) @<<<  \oplus_{d \in X^{(1)}}\underline{K}^{(i)}_{m-1}(O_{X,d} \ on \ d)\\
     @VVV @VVV @VVV\\
     \oplus_{y \in X^{(2)}}\underline{H}_{y}^{2}(\Omega_{O_{X}/ \mathbb{Q}}^{\bullet,(i)}) @<tan4<< \oplus_{y[\varepsilon] \in X[\varepsilon]^{(2)}}\underline{K}^{(i)}_{m-2}(O_{X,y}[\varepsilon] \ on \ y[\varepsilon]) @<<< \oplus_{y \in X^{(2)}}\underline{K}^{(i)}_{m-2}(O_{X,y} \ on \ y) \\
     @VVV @VVV @VVV\\
      \dots @<tan<< \dots @<<< \dots \\ 
     @VVV @VVV @VVV\\
     \oplus_{x\in X^{(n)}}\underline{H}_{x}^{n}(\Omega_{O_{X}/ \mathbb{Q}}^{\bullet,(i)}) @<tan(n+2)<< \oplus_{x[\varepsilon]\in X[\varepsilon]^{(n)}}\underline{K}^{(i)}_{m-n}(O_{X,x}[\varepsilon] \ on \ x[\varepsilon]) @<<<  \oplus_{x \in X^{(n)}}\underline{K}^{(i)}_{m-n}(O_{X,x} \ on \ x) \\
     @VVV @VVV @VVV\\
      0 @. 0 @. 0
  \end{CD}
\]
}
where 
\begin{equation}
\begin{cases}
 \begin{CD}
 \Omega_{O_{X}/ \mathbb{Q}}^{\bullet,(i)}= \Omega^{{2i-m+1}}_{O_{X}/ \mathbb{Q}}, for \  \frac{m-1}{2}  < \ i \leq m-1.\\
  \Omega_{O_{X}/ \mathbb{Q}}^{\bullet,(i)}= 0, else.
 \end{CD}
\end{cases}
\end{equation} 
\end{theorem}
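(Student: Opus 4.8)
The plan is to take the commutative diagram constructed in the previous theorem as given and to split it term by term into Adams eigen-components. The only genuinely new ingredients are: (i) that the Adams operations $\psi^{k}$ act compatibly on every object and every arrow of that diagram, and (ii) the identification of the weight grading that thereby appears on the left-hand column.

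For (i), I would first recall that the $\lambda$-operations, hence the operations $\psi^{k}$, are defined on the $K$-theory with supports of an arbitrary Noetherian scheme --- in particular on the non-regular schemes $X[\varepsilon]$ and $\mathrm{Spec}\,O_{X,x}[\varepsilon]$ that occur in the middle column --- are natural in the scheme, and are compatible with localization long exact sequences (Gillet--Soul\'e, Levine). Hence $\psi^{k}$ acts on each of the flasque sheaves $\underline{K}_{m-p}(O_{X,x} \ on \ x)$ and $\underline{K}_{m-p}(O_{X,x}[\varepsilon] \ on \ x[\varepsilon])$, commutes with the residue maps that make up the Gersten/Cousin differentials, and commutes with the horizontal pullbacks $K_{m}(O_{X})\to K_{m}(O_{X}[\varepsilon])$. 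So the two right-hand columns, with their horizontal maps, already form a diagram of $\lambda$-modules.

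It remains to promote the tangent maps $tan1,\dots,tan(n+2)$ to morphisms of $\lambda$-modules and to pin down the weights on their targets. Each $tan_{i}$ is built, as in the previous theorem, from Goodwillie's isomorphism $K_{m}(R[\varepsilon],(\varepsilon))\otimes\mathbb{Q}\cong HC_{m-1}(R[\varepsilon],(\varepsilon))\otimes\mathbb{Q}$ for the nilpotent extension $R\subset R[\varepsilon]$ together with the identification of the right-hand side with the K\"ahler-differential sheaves in the left column. Under Goodwillie's map the $\psi^{k}$-eigenspace splitting on the $K$-theory side corresponds to the Hodge ($\lambda$-)decomposition of cyclic homology (Cathelineau), and tracing the weight of each summand through Connes' operator $B$ places $\Omega^{p}_{O_{X}/\mathbb{Q}}$ in Adams weight $i$ with $p=2i-m+1$, so that $tan_{i}$ becomes a map of weight-graded objects. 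Verifying this weight assignment --- and checking it is compatible with the residue maps in the Cousin complex and with all the vertical differentials --- is the step I expect to be the main obstacle; the rest is formal.

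For (ii) and the conclusion: since $\mathrm{char}\,k=0$ the operations $\psi^{k}$ are diagonalizable on all groups in sight --- the relative (nilpotent) contributions are honest $\mathbb{Q}$-vector spaces by Goodwillie's theorem, and on the remaining terms $K^{(i)}$ means the weight-$i$ eigenspace of $K\otimes\mathbb{Q}$, equivalently the $i$-th graded piece of the $\gamma$-filtration. Therefore the whole diagram of the previous theorem decomposes as the direct sum over $i$ of its weight-$i$ subdiagrams, and taking weight-$i$ parts objectwise gives exactly the asserted diagram: the left column becomes the Cousin resolution of $\Omega^{2i-m+1}_{O_{X}/\mathbb{Q}}$, which vanishes unless $\frac{m-1}{2}<i\le m-1$, and the horizontal maps restrict to $tan_{i}^{(i)}$. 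Each column is again a flasque resolution because a direct summand of a flasque sheaf is flasque and the eigenspace functor $M\mapsto M^{(i)}$ is exact (it is projection onto a direct summand), so flasqueness of the terms and exactness of the columns are both inherited from the previous theorem.
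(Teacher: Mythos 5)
Your proposal matches the paper's own route: the paper obtains this theorem precisely by splitting the diagram of the preceding theorem into Adams eigen-components, using the spectrum-level splitting fibrations of Corti\~nas--Haesemeyer--Weibel to get the Cathelineau-type identification $K^{(i)}_{n}(X[\varepsilon]\text{ on }Y[\varepsilon],\varepsilon)=HN^{(i)}_{n}(X[\varepsilon]\text{ on }Y[\varepsilon],\varepsilon)$ together with the SBI-sequence computation $HN^{(i)}_{n}(R[\varepsilon],\varepsilon)=\Omega^{2i-n-1}_{R/\mathbb{Q}}$ and the depth argument for the terms with support --- which is exactly the weight bookkeeping you describe. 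The one point you elide that the paper flags explicitly is that the terms $\underline{K}_{m-p}$ may be \emph{negative} K-groups (since $m$ is arbitrary), on which the Adams operations must first be defined by descending induction via the Bass fundamental sequence (Weibel's method, recalled in the paper's Section 4.2) before the eigenspace projection makes sense there.
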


Moreover, we have 
\begin{theorem}
There exists the following commutative diagram(each column is a flasque resolution, $m$ can be any integer.):
{\footnotesize
\[
  \begin{CD}
     0 @. 0 @. 0\\
     @VVV @VVV @VVV\\
     (\Omega_{O_{X}/ \mathbb{Q}}^{\bullet})^{\oplus j} @<tan1<< K_{m}(O_{X_{j}}) @<<< K_{m}(O_{X}) \\
     @VVV @VVV @VVV\\
     (\Omega_{k(X)/ \mathbb{Q}}^{\bullet})^{\oplus j} @<tan2<<  K_{m}(k(X)_{j}) @<<< K_{m}(k(X)) \\
     @VVV @VVV @VVV\\
     \oplus_{d \in X^{(1)}}\underline{H}_{d}^{1}((\Omega_{O_{X}/\mathbb{Q}}^{\bullet})^{\oplus j} @<tan3<< \oplus_{d_{j} \in X_{j} ^{(1)}}\underline{K}_{m-1}(O_{X_{j},d_{j}} \ on \ d_{j}) @<<<  \oplus_{d \in X^{(1)}}\underline{K}_{m-1}(O_{X,d} \ on \ d)\\
     @VVV @VVV @VVV\\
     \oplus_{y \in X^{(2)}}\underline{H}_{y}^{2}((\Omega_{O_{X}/ \mathbb{Q}}^{\bullet})^{\oplus j} @<tan4<< \oplus_{y_{j} \in X_{j} ^{(2)}}\underline{K}_{m-2}(O_{X_{j},y_{j}} \ on \ y_{j}) @<<< \oplus_{y \in X^{(2)}}\underline{K}_{m-2}(O_{X,y} \ on \ y) \\
     @VVV @VVV @VVV\\
      \dots @<tan<< \dots @<<< \dots \\ 
     @VVV @VVV @VVV\\
     \oplus_{x\in X^{(n)}}\underline{H}_{x}^{n}((\Omega_{O_{X}/ \mathbb{Q}}^{\bullet})^{\oplus j} @<tan(n+2)<< \oplus_{x_{j} \in X_{j} ^{(n)}}\underline{K}_{m-n}(O_{X_{j},x_{j}} \ on \ x_{j}) @<<<  \oplus_{x \in X^{(n)}}\underline{K}_{m-n}(O_{X,x} \ on \ x) \\
     @VVV @VVV @VVV\\
      0 @. 0 @. 0
  \end{CD}
\]
}
where
\begin{equation}
\begin{cases}
 \begin{CD}
 \Omega_{O_{X}/ \mathbb{Q}}^{\bullet} = \Omega^{m-1}_{O_{X}/\mathbb{Q}}\oplus \Omega^{m-3}_{O_{X}/\mathbb{Q}}\oplus \dots \\
 \Omega_{k(X)/ \mathbb{Q}}^{\bullet} = \Omega^{m-1}_{k(X)/\mathbb{Q}}\oplus \Omega^{m-3}_{k(X)/\mathbb{Q}}\oplus \dots
\end{CD}
\end{cases}
\end{equation} 

\end{theorem}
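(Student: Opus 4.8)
The plan is to deduce this from the case $j=1$ (the first theorem above) by exploiting the grading of the truncated polynomial ring $O_{X_j}=O_X[t]/(t^{j+1})=\bigoplus_{w=0}^{j}t^{w}O_X$. The degree-zero inclusion $O_X\hookrightarrow O_{X_j}$ and the retraction $O_{X_j}\twoheadrightarrow O_X$, $t\mapsto0$, exhibit $\mathcal{G}_0$ as a split subcomplex of $\mathcal{G}_j$ (just as in the $j=1$ theorem), so each row of the asserted diagram is the corresponding split short exact sequence and $\ker(\mathcal{G}_j\to\mathcal{G}_0)$ is a direct summand of the flasque complex $\mathcal{G}_j$, hence a flasque complex. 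Everything then reduces to identifying this kernel complex, \emph{with its differentials}, with $j$ copies of the Cousin resolution of $\Omega^{\bullet}_{O_X/\mathbb{Q}}$.

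For the identification I would proceed as follows. Term by term, $\ker(\mathcal{G}_j\to\mathcal{G}_0)$ is assembled from the relative $K$-theory of the nilpotent ideal $(t)$, globally and at each local ring $O_{X_j,x_j}=O_{X,x}[t]/(t^{j+1})$ with supports on $x_j$. By the Chern character and Goodwillie's theorem these relative groups are computed by relative cyclic homology, which inherits the weight grading $w=1,\dots,j$ from $O_{X_j}=\bigoplus_w t^wO_X$; since the cyclic differentials preserve weight and, by naturality of the localization boundary maps, so do the differentials of $\mathcal{G}_j$, the kernel complex splits as $\bigoplus_{w=1}^{j}(\ker(\mathcal{G}_j\to\mathcal{G}_0))^{(w)}$. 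Next one observes that the weight-$w$ part of the Hochschild (hence cyclic) complex of $O_X[t]/(t^{j+1})$ is spanned by tensors $t^{a_0}\otimes\cdots\otimes t^{a_r}$ with $\sum a_i=w$; as long as $w\le j$ the relation $t^{j+1}=0$ is invisible, so it agrees with the weight-$w$ part of the Hochschild complex of the polynomial ring $O_X[t]$, and a one-line computation (dividing by $w$, using $\operatorname{char}k=0$) shows its cyclic homology is independent of $w\ge1$. The same argument, together with the effacement theorem / d\'evissage to the residue fields, handles the local terms. Hence the weight-$w$ summand, for every $1\le w\le j$, is isomorphic to the weight-$1$ summand, i.e. to $\ker(\mathcal{G}_1\to\mathcal{G}_0)$, which by the first theorem is the Cousin resolution of $\Omega^{\bullet}_{O_X/\mathbb{Q}}$. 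This yields $\ker(\mathcal{G}_j\to\mathcal{G}_0)\cong\bigl(\text{Cousin resolution of }\Omega^{\bullet}_{O_X/\mathbb{Q}}\bigr)^{\oplus j}$, the left column; the maps $\mathrm{tan}(i)$ are the weight-$(\ge1)$ projections, whose naturality gives commutativity of the whole diagram. (Equivalently one may use the $(t)$-adic filtration of $O_{X_j}$, each of the $j$ square-zero surjections $O_X[t]/(t^{i+1})\to O_X[t]/(t^i)$ contributing one copy of $\Omega^{\bullet}_{O_X/\mathbb{Q}}$ via Goodwillie; the grading merely makes this splitting canonical.)

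It remains to see that all three columns are flasque resolutions. The middle and right columns are $\mathcal{G}_j$ and $\mathcal{G}_0$, flasque resolutions by the Definition-theorem; the left column is $j$ copies of the Cousin complex of $\Omega^{\bullet}_{O_X/\mathbb{Q}}$, which is flasque since its terms are direct sums of skyscraper-type sheaves $j_{x\ast}\underline{H}^{p}_{x}$, and which is a resolution because $\Omega^{p}_{O_X/k}$ is locally free, so $\Omega^{p}_{O_X/\mathbb{Q}}$ is filtered by the locally free --- hence Cohen--Macaulay --- sheaves $\Omega^{i}_{k/\mathbb{Q}}\otimes_{k}\Omega^{p-i}_{O_X/k}$, each of which has an exact Cousin complex (Hartshorne, \emph{Residues and Duality}); this is the same point as in the proof of the first theorem.

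The step I expect to be the main obstacle is not counting the number of copies but establishing the identification \emph{at the level of complexes}: one must verify that the weight grading is respected by the boundary maps of the localization sequences that define $\mathcal{G}_j$, and, in each fixed weight, that the Goodwillie isomorphism and the Chern character intertwine those boundary maps with the differentials of the Cousin complex of $\Omega^{\bullet}_{O_X/\mathbb{Q}}$. In effect this means re-running the proof of the $j=1$ theorem inside each weight-$w$ eigenspace; granting that, assembling the diagram for general $j$ is formal.
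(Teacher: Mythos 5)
Your overall skeleton --- the split rows coming from the retraction $t\mapsto 0$, the reduction to identifying the relative (kernel) complex, flasqueness, and exactness of the Cousin complex via the depth of $\Omega^{p}_{O_X/\mathbb{Q}}$ --- is the same architecture the paper uses for $j=1$ (Chern character, effacement, Goodwillie with supports, then the local-cohomology/depth argument), and the paper itself only asserts that ``similar methods'' give the general case. But your central identification step is wrong, on two counts. First, the $t$-adic weight decomposition of the relative Hochschild/cyclic complex of $O_X[t]/(t^{j+1})$ has nonzero components in \emph{all} weights $w\geq 1$, not only $1\leq w\leq j$: a tensor $t^{a_0}\otimes\cdots\otimes t^{a_r}$ has $\sum a_i$ arbitrarily large even when every $a_i\leq j$, so $\ker(\mathcal{G}_j\to\mathcal{G}_0)$ is not exhausted by the weights $1,\dots,j$. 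Second, the weight-$w$ piece for $1\leq w\leq j$ (which, as you correctly observe, coincides with that of the polynomial ring $O_X[t]$) computes only the single top term $\Omega^{n}_{O_X/\mathbb{Q}}\cong HH_n(O_X)$, not the full $\Omega^{n}\oplus\Omega^{n-2}\oplus\cdots$. Already for $j=1$ this is visible: the summand $\Omega^{0}=O_X$ inside $HC_2(O_X[\varepsilon],\varepsilon)=\Omega^{2}_{O_X/\mathbb{Q}}\oplus O_X$ is carried by $\varepsilon$-weight $3$ (the class of $\varepsilon\otimes\varepsilon\otimes\varepsilon$), not by weight $1$. So ``weight-$w$ summand $\cong$ weight-$1$ summand $=\ker(\mathcal{G}_1\to\mathcal{G}_0)$'' conflates the weight-$1$ piece with the whole relative theory; the lower-degree forms in the answer come precisely from the weights $w>j$, where the truncation $t^{j+1}=0$ is \emph{not} invisible. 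Your parenthetical alternative has the same defect: for the square-zero surjection $O_X[t]/(t^{i+1})\to O_X[t]/(t^{i})$ the relative cyclic homology depends on the source ring, not just on $O_X$, so ``each step contributes one copy of $\Omega^{\bullet}_{O_X/\mathbb{Q}}$ via Goodwillie'' is essentially the assertion to be proved, not a consequence of Goodwillie's theorem.

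What is actually needed is the analogue of Theorem 4.2 / Corollary 4.4 for $t^{j+1}$: a computation of $HC_n(R[t]/(t^{j+1}),(t))$ for $R$ smooth over $k$, $\operatorname{char}k=0$, for instance by the same device the paper uses for the dual numbers (the $\lambda$-decomposition, vanishing of $S$ on the relative part as in Geller--Weibel, and induction through the short exact sequences $0\to HC^{(i-1)}_{n-1}\to HH^{(i)}_{n}\to HC^{(i)}_{n}\to 0$), or by quoting the known characteristic-zero computation of cyclic homology of truncated polynomial algebras. That computation does return $(\Omega^{n}_{R/\mathbb{Q}}\oplus\Omega^{n-2}_{R/\mathbb{Q}}\oplus\cdots)^{\oplus j}$, but the $j$ copies are not indexed by the weights $1,\dots,j$ in the way you propose. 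Once that input is supplied, the rest of your assembly --- Goodwillie with supports, the depth/local-cohomology spectral sequence, flasqueness, and the commutativity of the diagram --- does go through exactly as in the $j=1$ proof.
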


 The proof for the above theorem, given in section 5, requires non-trivial techniques from higher algebraic K-theory and negative cyclic homology.
 The main points are outlined as follows.
 \begin{itemize}
  \item  According to [5],  there exists a  Chern character from K-theory spectrum $\mathcal{K}$ to negative cyclic homology spectrum $\mathcal{HN}$,
\[
 Ch: \mathcal{K} \to \mathcal{HN},
\]
here $\mathcal{K}$ is the Thomason-Trobaugh spectrum and $\mathcal{HN}$ is the spectrum associated to negative cyclic complex constructed by Keller[18,20].
This Chern character induces maps from the coniveau spectral sequence associated to $\mathcal{K}$ to the coniveau spectral sequence associated to $\mathcal{HN}$.

\item Effacement  theorem.

In our approach, $\mathcal{K}$ and $\mathcal{HN}$ are considered as ``cohomology theories with support'' in the sense of [7].
Both $\mathcal{K}$ and $\mathcal{HN}$ satisfy \'etale excision and projective bundle formula. Therefore, according to [7], $\mathcal{K}$ and $\mathcal{HN}$ 
are effaceable functors. Effacement theorem gives us the exactness and universal exactness of sheafified Bloch-Gersten-Quillen sequence.

\item Goodwillie-type  and  Cathelineau-type   results

Goodwillie -type and Cathelineau-type results enable us to compute relative K-groups(with support) in terms of relative negative cyclic groups(with support). Our computation is based on a recent version
by Corti\~nas-Haesemeyer-Weibel[6].
 \end{itemize}

This paper is organized as follows.  We begin with an introduction of Green-Griffiths' work and their question in section 2. 

In section 3, we discuss effacement theorem and Chern character which are the first two ingredients for proving our main result.

Lambda and Adams operations are discussed in section 4. We show Goodwillie-type and Cathelineau-type results, which are the third ingredient for proving our main results which are proved in section 5.
\section*{Acknowledgements}
This note is a reproduction of my thesis. I would like to express my deep gratitude to professor J.W.Hoffman for being a wonderful advisor. I also want to sincerely thank professor M.Schlichting for teaching me higher K-theory and for inviting me to visit Warwick Mathematics Institute. Many thanks to K-theory experts P.Balmer, C.Soul\'e and C.Weibel. 

I am very grateful to professor M.Green and professor P.Griffiths for asking interesting questions and for enlightening discussions. Their questions, talks and papers have completely reformulated my understanding of algebraic cycles.

It is a pleasure to thank professor James Oxley for various help during the last 5 years.

Last not least, I want to sincerely thank Department of Mathematics of LSU for financial support and for providing me with a pleasant working environment.

\section{Green-Griffiths' question}
\label{Green-Griffiths' question}
 
 Green-Griffiths initiated the study of tangent spaces to Chow groups in [10]. In the following, we use points on a surface, $CH^{2}(X)$, to explain their ideas. 
 
The well-known Bloch-Gersten-Quillen flasque resolution
\[
    0 \to K_{2}(O_{X}) \to K_{2}(\mathbb{C}(X)) \to \bigoplus_{y \in X^{(1)}}\underline{K}_{1}( \mathbb{C}(y)) \to \bigoplus_{x\in X^{(2)}} \underline{K}_{0}( \mathbb{C}(x)) \to 0 
\]
leads to the Bloch's formula:
\[
 CH^{2}(X)=H^{2}(X,K_{2}(O_{X})).
\]

Recall that the tangent space to $K_{2}(O_{X})$ is defined to be $TK_{2}(O_{X})= Ker\{K_{2}(O_{X}[\varepsilon]) \xrightarrow{\varepsilon = 0} K_{2}(O_{X}) \}$. Combining with Van der Kallen's isomorphism which identifies $TK_{2}(O_{X})$ with absolute differentials
  \[
     TK_{2}(O_{X})=\Omega_{X/ \mathbb{Q}}^{1},
  \]
one can formally define tangent space to Chow group to be  
 \[
     TCH^{2}(X)= H^{2}(X,TK_{2}(O_{X}))=H^{2}(X,\Omega_{X/ \mathbb{Q}}^{1}).
  \]
  
Green and Griffiths would like to understand the geometric significance of the above isomorphism. The clue to do this is from the Cousin flasque resolution
\[ 
    0 \to \Omega_{X/ \mathbb{Q}}^{1} \to \Omega_{\mathbb{C}(X)/ \mathbb{Q}}^{1} \to \bigoplus_{y \in X^{(1)}}H_{y}^{1}(\Omega_{X/ \mathbb{Q}}^{1}) \xrightarrow{\partial}  \bigoplus_{x\in X^{(2)}} H_{x}^{2}(\Omega_{X/ \mathbb{Q}}^{1}) \to 0 
\]
which gives rise to 
 \[
   H^{2}(X,\Omega_{X/ \mathbb{Q}}^{1})=\dfrac{\bigoplus_{x\in X^{(2)}}H_{x}^{2}(\Omega_{X/ \mathbb{Q}}^{1})}{Im(\partial)}.
 \]
 
Following the above information and the classical definition$CH^{2}(X)=\dfrac{Z^{2}(X)}{Z^{2}_{rat}(X)}$, Green and Griffiths define the tangent space $TZ^{2}(X)$
to the 0-cycles on $X$ as 
\[
 TZ^{2}(X)=\bigoplus_{x\in X^{(2)}} H_{x}^{2}(\Omega_{X/ \mathbb{Q}}^{1}),
\]
and define a tangent subspace $TZ^{2}_{rat}(X)$ to the rational equivalence as 
\[
 TZ^{2}_{rat}(X)=Im(\partial).
\]

The question is to show that there is really a tangent map from cycles to the local cohomology. In other words, Green and Griffiths ask for a Bloch-Gersten-Quillen
 type exact sequence which can fill in the middle in the following diagram
\[\displaystyle
  \begin{CD}
     0 @. 0 @. 0\\
     @VVV @VVV @VVV\\
     \Omega_{X/ \mathbb{Q}}^{1} @<tan1<< K_{2}(X[\varepsilon]) @>\varepsilon=0>> K_{2}(X) \\
     @VVV @VVV @VVV\\
     \Omega_{\mathbb{C}(X)/ \mathbb{Q}}^{1} @<tan2<<  K_{2}(\mathbb{C}(X)[\varepsilon]) @>\varepsilon=0>> K_{2}(\mathbb{C}(X)) \\
     @VVV @VVV @VVV\\
     \bigoplus_{y\in X^{(1)}}H_{y}^{1}(\Omega_{X/\mathbb{Q}}^{1}) @<tan3<<  Arcs^{1}(X) @>\varepsilon=0>> \bigoplus_{y\in X^{(1)}}K_{1}(\mathbb{C}(y)) \\
     @VVV @VVV @VVV\\
     \bigoplus_{x\in X^{(2)}} H_{x}^{2}(\Omega_{X/ \mathbb{Q}}^{1}) @<tan4<<  Arcs^{2}(X) @>\varepsilon=0>> \bigoplus_{x\in X^{(2)}}K_{0}(\mathbb{C}(x)) \\
     @VVV @VVV @VVV\\
      0 @. 0 @. 0
  \end{CD}
\]
where $Arcs^{1}(X)$ and  $Arcs^{2}(X)$ stand for the arc space associated with $\bigoplus_{y\in X^{(1)}}K_{1}(\mathbb{C}(y))$ and $\bigoplus_{x\in X^{(2)}}K_{0}(\mathbb{C}(x))$ respectively.

Green and Griffiths implicitly introduce groups of ``Arcs". The idea is that, an element of $Arcs^{1}(X)$ should be a formal sum of the expression(a pair) $\{div(f+\varepsilon f_{1}),g+\varepsilon g_{1}\}$,
where $f=0$ is a local expression for a divisor on $X$ and $g \in \mathbb{C}(Y)^{\ast}$. We think of $f+\varepsilon f_{1}$ as the $1^{st}$ order deformation of $Y=div(f)$
and $g+\varepsilon g_{1}$ is a deformation of $g$. The tangent to the $\{div(f+\varepsilon f_{1}),g+\varepsilon g_{1}\}$ is defined in the following way:

First, the following diagram
\begin{equation}
\begin{cases}
 \begin{CD}
   O_{X,y} @>f>> O_{X,y} @>>> O_{X,y}/(f) @>>> 0  \\
   O_{X,y} @>\frac{g_{1} df}{g}-\frac{f_{1} dg}{g}>> \Omega_{X/ \mathbb{Q},y}^{1}
 \end{CD}
\end{cases}
\end{equation} 
gives an element $\alpha$ in $Ext_{O_{X,y}}^{1}(O_{X,y}/(f),\Omega_{X/ \mathbb{Q},y}^{1})$. Noting that 
\[
H_{y}^{1}(\Omega_{X/\mathbb{Q}}^{1})=\varinjlim_{n \to \infty}Ext_{O_{X,y}}^{1}(O_{X,y}/(f)^{n},\Omega_{X/ \mathbb{Q},y}^{1}),
\]
the image $[\alpha]$ of $\alpha$ under the limit is in $H_{y}^{1}(\Omega_{X/\mathbb{Q}}^{1})$ and it is the tangent to $\{div(f+\varepsilon f_{1}),g+\varepsilon g_{1}\}$.

Similarly, an element of $Arcs^{2}(X)$ is a first order deformation of 0-cycles on $X$. To be precise, it is of the form $Z_{\varepsilon}=V(u+\varepsilon u_{1},v+\varepsilon v_{1})$, where
$Z=V(u,v)$ is supported on $x$ and $V(-)$ means zero set. The tangent to $Z_{\varepsilon}$ is defined in the following way. First, the following diagram
\begin{equation}
\begin{cases}
 \begin{CD}
  O_{X,x} @>(v,-u)>> O_{X,x}^{\oplus 2} @>(u,v)>>  O_{X,x} @>>> O_{X,x}/(u,v) @>>> 0\\
  O_{X,x} @>v_{1} du-u_{1} dv>> \Omega_{X/ \mathbb{Q},x}^{1}
 \end{CD}
\end{cases}
\end{equation}
gives an element $\beta$ in $Ext_{O_{X,x}}^{2}(O_{X,x}/(u,v),\Omega_{X/ \mathbb{Q},x}^{1})$. Noting that 
\[
H_{x}^{2}(\Omega_{X/\mathbb{Q}}^{1})=\varinjlim_{n \to \infty}Ext_{O_{X,x}}^{2}(O_{X,x}/(u,v)^{n},\Omega_{X/ \mathbb{Q},x}^{1}),
\]
the image $[\beta]$ of $\beta$ under the limit is in $H_{x}^{2}(\Omega_{X/\mathbb{Q}}^{1})$ and it is the tangent to $Z_{\varepsilon}=V(u+\varepsilon u_{1},v+\varepsilon v_{1})$.

By some heuristic arguments, Green and Griffiths conclude that 

\begin{theorem}

The tangent sequence to the Bloch-Gersten-Quillen sequence 
\[
    0 \to K_{2}(X) \to K_{2}(\mathbb{C}(X)) \to \bigoplus_{y \in X^{(1)}}K_{1}( \mathbb{C}(y)) \to \bigoplus_{x\in X^{(2)}} K_{0}( \mathbb{C}(x)) \to 0 
\]
is the Cousin flasque resolution of $\Omega_{X/ \mathbb{Q}}^{1}$
\[ 
    0 \to \Omega_{X/ \mathbb{Q}}^{1} \to \Omega_{\mathbb{C}(X)/ \mathbb{Q}}^{1} \to \bigoplus_{y \in X^{(1)}}H_{y}^{1}(\Omega_{X/ \mathbb{Q}}^{1}) \to  \bigoplus_{x\in X^{(2)}} H_{x}^{2}(\Omega_{X/ \mathbb{Q}}^{1}) \to 0.
\]
 
\end{theorem}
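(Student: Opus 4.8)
This is the case $n=\dim X=2$, $m=2$ of Theorem~1.1, and the plan is to realize the ``tangent sequence'' as the kernel of a morphism between two genuine flasque resolutions and then to identify that kernel using the Chern character to negative cyclic homology together with Goodwillie's theorem. Concretely, I would place the Bloch--Gersten--Quillen flasque resolution $\mathcal{G}_1$ of $K_2(O_{X[\varepsilon]})$ on $X[\varepsilon]=X\times\mathrm{Spec}\,\mathbb{C}[\varepsilon]/(\varepsilon^{2})$ next to the resolution $\mathcal{G}_0$ of $K_2(O_X)$ on $X$; that both exist and are flasque resolutions is precisely the effacement theorem of [7], since $\mathcal{K}$ (the Thomason--Trobaugh spectrum) is a cohomology theory with supports satisfying \'etale excision and the projective bundle formula, hence is effaceable. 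Setting $\varepsilon=0$ gives a morphism of complexes $\mathcal{G}_1\to\mathcal{G}_0$, and because the ring maps $\mathbb{C}\hookrightarrow\mathbb{C}[\varepsilon]/(\varepsilon^{2})\to\mathbb{C}$ split, this morphism is a termwise split surjection: on the $p$-th term it is $\underline{K}_{2-p}(O_{X,x}[\varepsilon]\ \mathrm{on}\ x[\varepsilon])\to\underline{K}_{2-p}(O_{X,x}\ \mathrm{on}\ x)$, with kernel I call $\underline{TK}_{2-p}(O_{X,x}\ \mathrm{on}\ x)$; for $p=0$ this is the tangent sheaf of $K_2$, and Green--Griffiths' ``$\mathrm{Arcs}^p(X)$'' is the globalization of the middle term. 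Forming $0\to T\mathcal{G}_0\to\mathcal{G}_1\to\mathcal{G}_0\to0$ and passing to cohomology, the splitting kills every connecting map, so $T\mathcal{G}_0$ is again a flasque resolution (a direct summand of a flasque sheaf is flasque), now of $\underline{TK}_2(O_X):=\ker\big(K_2(O_{X[\varepsilon]})\to K_2(O_X)\big)$. This gives the left two columns of the Green--Griffiths diagram as honest resolutions; it remains to recognize them.

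To identify the terms, Van der Kallen's isomorphism gives $TK_2(O_{X,x})=\Omega^{1}_{O_{X,x}/\mathbb{Q}}$ at every point, hence $\underline{TK}_2(O_X)=\Omega^{1}_{X/\mathbb{Q}}$ and the generic term is $\Omega^{1}_{\mathbb{C}(X)/\mathbb{Q}}$. The substantive point is $\underline{TK}_{2-p}(O_{X,x}\ \mathrm{on}\ x)\cong\underline{H}^{p}_{x}(\Omega^{1}_{X/\mathbb{Q}})$ for $p=1,2$: here $O_{X,x}$ is regular but $O_{X,x}[\varepsilon]$ is not, which is exactly why these support groups are nonzero and why one cannot argue by d\'evissage. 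For this I would bring in the Chern character $\mathrm{Ch}\colon\mathcal{K}\to\mathcal{HN}$ to Keller's negative cyclic homology spectrum. Relative to $\varepsilon=0$, Goodwillie's theorem---in the Goodwillie-- and Cathelineau-type form of Corti\~nas--Haesemeyer--Weibel---says that the relative Chern character is an equivalence on the nilpotent extension $O_{X,x}[\varepsilon]/O_{X,x}$, and the same with support on $x$; so the relative coniveau spectral sequence of $\mathcal{K}$ agrees with that of $\mathcal{HN}$. Since $\mathcal{HN}$ is itself a cohomology theory with supports satisfying \'etale excision and the projective bundle formula, its Cousin complex is a flasque resolution by [7], and its relative version over $\varepsilon=0$ is, by the Cathelineau-type computation of relative negative cyclic homology, the Cousin complex of $\Omega^{1}_{X/\mathbb{Q}}$. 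Matching $p$-th terms then yields $\underline{TK}_{2-p}(O_{X,x}\ \mathrm{on}\ x)=\underline{H}^{p}_{x}(\Omega^{1}_{X/\mathbb{Q}})$, and matching differentials identifies $T\mathcal{G}_0$ with the Cousin resolution $0\to\Omega^{1}_{X/\mathbb{Q}}\to\Omega^{1}_{\mathbb{C}(X)/\mathbb{Q}}\to\bigoplus_{y}H^{1}_{y}(\Omega^{1}_{X/\mathbb{Q}})\to\bigoplus_{x}H^{2}_{x}(\Omega^{1}_{X/\mathbb{Q}})\to0$, which is the claim.

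The bookkeeping that produces $T\mathcal{G}_0$ as a flasque resolution is routine once effacement for $\mathcal{K}$ is granted. The hard part will be the support version of the Goodwillie/Chern-character comparison: one must check that $\mathcal{HN}$ and its relative theory are effaceable, that the relative Chern character with support on each $x$---not merely on affine schemes, as in the classical Goodwillie theorem---is an equivalence, and, most delicately, that under these identifications the residue/Gysin boundary maps in the coniveau tower of $\mathcal{K}$ correspond to the connecting maps of the Cousin complex of $\Omega^{\bullet}$; equivalently, one needs the compatibility of three families of boundary maps: localization in $K$-theory, the $SBI$ and connecting maps in cyclic homology, and the local-cohomology connecting maps for differential forms. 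Carrying this compatibility through the whole coniveau filtration is where the nontrivial work lies, and it is done in Section~5.
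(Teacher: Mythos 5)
Your proposal is correct and follows essentially the same route as the paper: the paper treats this statement (Green--Griffiths' heuristic conclusion for $m=n=2$) as the special case of Theorem 5.6, obtained exactly as you describe --- effacement/universal exactness of $\mathcal{K}$ and $\mathcal{HN}$ for $X\times T$ gives the two flasque resolutions and their split surjection, the spectrum-level Chern character gives a map of coniveau spectral sequences (hence compatibility of all differentials), and the Goodwillie--Cathelineau/Corti\~nas--Haesemeyer--Weibel relative equivalence with support plus the depth computation of $HN_{m}(O_{X,y}[\varepsilon]\ \mathrm{on}\ y[\varepsilon],\varepsilon)=H^{j}_{y}(\Omega^{\bullet})$ identifies the kernel column with the Cousin resolution of $\Omega^{1}_{X/\mathbb{Q}}$. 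The only detail worth adding is the paper's remark that the support groups for $\mathcal{HN}$ are defined by direct limits, which do not automatically agree with the sheaf-theoretic ones of [5]; the paper resolves this for the relative groups via $HN_{p}(-,\varepsilon)=HC_{p-1}(-,\varepsilon)$ before invoking the Goodwillie-type theorem.
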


More generally, Green and Griffths pose the following question:
\begin{quote}
Can one define the Bloch-Quillen-Gersten sequence $\mathcal{G}_j$ on 
infinitesimal neighborhoods $X_j = X \times Spec(k[t]/(t ^{j+1})$
so that 
\[
 ker(\mathcal{G}_1 \to \mathcal{G}_0) =  \underline{\underline{T}}\mathcal{G}_0
\]
\end{quote}
What's meant here is that $ \underline{\underline{T}}\mathcal{G}_0$ should be the 
Cousin resolution of $TK_m (\mathcal{O}_X)$. In more
detail, 
if $X$ is smooth of dimension $n$ over a field $k$ of characteristic $0$, and if we
denote by 
$\mathcal{G}_0$ the Bloch-Quillen-Gersten resolution($m$ can be any integer) 

{\footnotesize
\begin{align*}
0 \to & K_{m}(O_{X}) \to K_{m}(k(X)) \to \bigoplus_{d \in X ^{(1)}}\underline{K}_{m-1}(O_{X,d} \ on \ d) \to \dots \\
 & \dots \to \bigoplus_{x \in X ^{(n)}}\underline{K}_{m-n}(O_{X,x} \ on \ x) \to 0.
\end{align*}
}
They are asking for analogs of this with $X$ replaced by infinitesimal 
thickenings $X_j$ in such a way that 
$ker(\mathcal{G}_1 \to \mathcal{G}_0)$ is the Cousin complex of 
$TK_m (\mathcal{O}_X)$.

We will provide an answer to this question in the following sections. 

\begin{remark}
In our setting, $Arcs^{1}(X)$ and $Arcs^{2}(X)$ are defined as(in theorem 5.6, $m=n=2$.) 
\[
 Arcs^{1}(X) = \bigoplus_{y[\varepsilon]\in X[\varepsilon]^{(1)}}K_{1}(O_{X,y}[\varepsilon] \ on \ y[\varepsilon]),
\]
\[
 Arcs^{2}(X) = \bigoplus_{x[\varepsilon]\in X[\varepsilon]^{(2)}}K_{0}(O_{X,x}[\varepsilon]  \ on \ x[\varepsilon]).
\]

On the other hand, our tangent maps are induced by Chern character, corollary 5.5. It is a quite interesting question to compare our tangent maps(induced by Chern character) with Green-Griffiths'.  
\end{remark}

\section{Effacement theorem and Chern character}
\label{Effacement theorem and Chern character}

In this section, we discuss effacement theorem and Chern character which are the first two ingredients for proving our main result. 

In section 3.1, we discuss effeacement theorem,
mainly following [7] and [28]. That is, we consider $\mathcal{K}$ and $\mathcal{HN}$ as ``cohomology theories with support'' in the sense of [8]. Both $\mathcal{K}$ and $\mathcal{HN}$
are effaceable functors, since they satisfy \'etale excision and projective bundle formula. 

In section 3.2, we recall the existence of Chern character from K-theory spectrum $\mathcal{K}$ to negative cyclic homology 
spectrum $\mathcal{HN}$, following [5].  This Chern character induces maps from the coniveau spectral sequence associated to $\mathcal{K}$ to  the coniveau spectral sequence associated to $\mathcal{HN}$. 
\subsection{Effacement theorem}
\label{effacement theorem}
Now, we would like to discuss the effacement theorem which  enables one to prove the Bloch-Ogus theorem. The following background is from [7]. The interested readers can check more  detail of the effacement theorem in [7]. 

As an important result in algebraic geometry,
the Bloch-Ogus theorem, briefly described, is as follows. Given a smooth algebraic variety $X$ and a cohomology theory $h^{\ast}$ satisfying \'etale excision
and ``Key lemma", filtration by codimension of support yields Cousin complexes which form the $E_{1}$-terms of the coniveau
spectral sequence converging to $h^{\ast}(X)$. Restriction of the Cousin complexes to the open subsets of $X$ defines complexes of flasque Zariski sheaves.
The Bloch-Ogus theorem says that these complexes of sheaves are acyclic, except in degree $0$ where their cohomology is the Zariski sheaf 
$\mathcal{H^{\ast}}$ associated to the presheaf $U \to h^{\ast}(U)$. This identifies the $E_{2}$-term of the coniveau spectral sequence to $H^{\ast}(X,\mathcal{H^{\ast}})$.

Bloch-Ogus reduce their theorem to proving the ''effacement theorem"(see below for precise statement) which is proved by using a geometric presentation lemma. Later, 
Gabber gave a different proof of effacement theorem for \'etale cohomology by essentially using the section at infinity (coming from an embedding of the affine line into the 
projective line) as well as a computation of the cohomology of the projective line.

Gabber's proof gives us more. In [7], Colliot-Th\'el\`ene, Hoobler and Kahn axiomatize Gabber's argument and show that Gabber's argument applies to any ``Cohomology theory with support'' which satisfies
\'etale excision and ``Key lemma". The latter follows either from homotopy invariance or from projective bundle formula. For a list of such cohomology theory with support,
we refer the readers to [7].

Now we adopt Colliot-Th\'el\`ene, Hoobler and Kahn's discussion to our setting. The following expression is essentially following the lecture given by M.Schlichting[28].

Let's begin by defining a ``cohomology theory with support" to a pair $(X,Z)$, where $Z$ is closed in a scheme $X$.
\begin{definition}[28]
 Let $\mathcal{A}$ be a functor from the category $Sch^{op}/k$ to
spectra or chain complexes:(The readers can take $\mathcal{A}$ to be K-theory for spectra and (negative)cyclic homology for complexes)
\[
 \mathcal{A} : Sch^{op}/k \to spectra
\]
or
\[
 \mathcal{A} : Sch^{op}/k \to chain \  complexes
\]
then we can extend $\mathcal{A}$ to a pair $(X,Z)$, where $Z$ is closed in $X$ as follows. 

For $\mathcal{A}$ spectrum-valued,
$\mathcal{A}(X \ on \ Z)$ is defined as the homotopy fiber of $\mathcal{A}(X)\to \mathcal{A}(X - Z)$ 
\[
 \mathcal{A}(X \ on \ Z) \to \mathcal{A}(X)\to \mathcal{A}(X - Z)
\]
and $\mathcal{A}^{q}(X \ on \ Z)$ is defined as $\pi_{-q}(\mathcal{A}(X \ on \ Z))$.

For $\mathcal{A}$ chain complexes-valued, if we write $C^{\bullet}$ to be the cone of $\mathcal{A}(X)\to \mathcal{A}(X - Z)$,
$\mathcal{A}(X \ on \ Z)$ is defined as $C^{\bullet}[-1]$ 
\[
 \mathcal{A}(X \ on \ Z) \to \mathcal{A}(X)\to \mathcal{A}(X - Z)
\]
and $\mathcal{A}^{q}(X \ on \ Z)$ is defined as $H_{-q}(\mathcal{A}(X \ on \ Z))$.
\end{definition}

Now we recall the following three definitions. 
\begin{definition}[7]
\'Etale excision

The functor $\mathcal{A}$ is said to satisfy \'etale excision if for any given diagram:
\[
 \begin{CD}
  Z  @>>> X^{'}\\
  @V=VV  @VfVV\\
  Z @>j>> X
 \end{CD}
\]
where $j: Z \to X$ is the closed immersion and $f$ is \'etale, the pullback
\[
 f^{\ast} : \mathcal{A}^{q}( X \ on \ Z) \xrightarrow{\simeq} \mathcal{A}^{q}( X^{'} \ on \ Z)
\]
is an isomorphism for any integer $q$. 
\end{definition}

\begin{definition}[7]
Zariski excision

The functor $\mathcal{A}$ is said to satisfy Zariski excision 
if the pullback
\[
 f^{\ast} : \mathcal{A}^{q}( X \ on \ Z) \xrightarrow{\simeq} \mathcal{A}^{q}( X^{'} \ on \ Z)
\]
is an isomorphism for any integer $q$, when $f$ runs over all  open immersions. 
\end{definition}

When the functor $\mathcal{A}$ satisfies Zariski excision, one can check that the above definition 3.1 does define a ``cohomology theory with support'' in the sense of [7]. The naturality can be verified by Octahedral axioms.

\begin{definition}[7]
Projective bundle formula for $\mathbb{P}^{1}$

 Let $X$ be any scheme over $k$ and $\mathbb{P}_{X}^{1}$ be the projective line over $X$. We write $\pi$ for the natural projection
\[
 \pi : \mathbb{P}_{X}^{1} \to X.
\]

The functor $\mathcal{A}$ is said to satisfy projective bundle formula for $\mathbb{P}^{1}$ if
\[
 (\pi^{\ast},O_{\mathbb{P}^{1}}(-1)\otimes \pi^{\ast}): \mathcal{A}^{q}( X )\oplus \mathcal{A}^{q}( X ) \xrightarrow{\simeq} \mathcal{A}^{q}(\mathbb{P}_{X}^{1})
\]
is an isomorphism for any integer $q$.
\end{definition}

Now, let's recall the following facts.
\begin{theorem}[5,30]
 Let $X$ be a noetherian scheme with finite dimension, then both the Thomason-Trobaugh K-theory spectrum $\mathcal{K}$ and negative cyclic homology $\mathcal{HN}$ satisfy \'etale excision and Projective bundle formula for $\mathbb{P}^{1}$.
\end{theorem}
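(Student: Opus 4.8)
The plan is to verify the two asserted properties separately, reducing each to the cited theorems of Thomason--Trobaugh [30] and Keller [18,20] together with standard structural facts about the dg-category $\mathbf{Perf}(X)$ of perfect complexes on $X$. The hypotheses that $X$ be noetherian of finite Krull dimension serve only to place us comfortably inside the scope of [30] and [5]: they guarantee that $\mathbf{Perf}(X)$ is a well-behaved idempotent-complete dg-category whose additive invariants recover the geometric spectra $\mathcal{K}(X)$ and $\mathcal{HN}(X)$. For a closed $Z\subseteq X$ with open complement $U=X-Z$, write $\mathbf{Perf}_{Z}(X)\subseteq\mathbf{Perf}(X)$ for the full dg-subcategory of perfect complexes acyclic on $U$; by [30] the sequence $\mathbf{Perf}_{Z}(X)\to\mathbf{Perf}(X)\to\mathbf{Perf}(U)$ is, up to idempotent completion, a Verdier localization, so $\mathbf{Perf}_{Z}(X)$ is the kernel of restriction to $U$.

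\emph{\'Etale excision.} I would first note that, in the sense of Definition 3.3, \'etale excision for a functor $\mathcal{A}$ is equivalent to the assertion that $\mathcal{A}$ sends every distinguished square
\[
\begin{CD}
X'-Z @>>> X' \\
@VVV @VfVV \\
X-Z @>>> X
\end{CD}
\]
(with $f$ \'etale and $f^{-1}(Z)\cong Z$) to a homotopy cartesian square: passing to vertical homotopy fibres turns such a square into the map $\mathcal{A}(X\text{ on }Z)\to\mathcal{A}(X'\text{ on }Z)$, and homotopy-cartesianness is precisely the statement that this map induces isomorphisms on all homotopy (resp. (co)homology) groups. For $\mathcal{K}$ this is the Nisnevich descent theorem of [30]. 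For $\mathcal{HN}$ I would argue structurally: the square of dg-categories $\mathbf{Perf}(X-Z)\leftarrow\mathbf{Perf}(X)\to\mathbf{Perf}(X')\to\mathbf{Perf}(X'-Z)$ attached to the square above is a pullback in which the kernels of the two horizontal Verdier quotients, $\mathbf{Perf}_{Z}(X)$ and $\mathbf{Perf}_{Z}(X')$, are identified via $f^{\ast}$; hence every localizing invariant of dg-categories (one sending Verdier sequences to fibre sequences) sends it to a homotopy cartesian square. Hochschild homology $\mathcal{HH}$ is such an invariant (Keller [18,20]), and negative cyclic homology is its homotopy $S^{1}$-fixed points, $\mathcal{HN}=\mathcal{HH}^{hS^{1}}$; since homotopy limits preserve homotopy cartesian squares, $\mathcal{HN}$ sends the square above to a homotopy cartesian square as well, which is \'etale excision for $\mathcal{HN}$. (Alternatively, one invokes \'etale base change and Zariski/Nisnevich descent for $HH$ and $HC$ directly, following Geller--Weibel.)

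\emph{Projective bundle formula.} Here the cleanest route uses that $\mathcal{K}$ and $\mathcal{HN}$ are additive invariants of dg-categories, i.e. they send a semiorthogonal decomposition with finitely many summands to the corresponding finite direct sum (for $\mathcal{HN}=\mathcal{HH}^{hS^{1}}$ because finite products commute with homotopy $S^{1}$-fixed points). Beilinson's resolution of the diagonal on $\mathbb{P}^{1}_{X}$ yields the semiorthogonal decomposition $\mathbf{Perf}(\mathbb{P}^{1}_{X})=\langle\,\pi^{\ast}\mathbf{Perf}(X),\ (O_{\mathbb{P}^{1}}(-1)\otimes\pi^{\ast})\,\mathbf{Perf}(X)\,\rangle$, whose two summands are carried isomorphically onto $\mathbf{Perf}(X)$ by $\pi^{\ast}$ and by $O_{\mathbb{P}^{1}}(-1)\otimes\pi^{\ast}$ respectively. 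Applying $\mathcal{A}\in\{\mathcal{K},\mathcal{HN}\}$ produces exactly the isomorphism $(\pi^{\ast},\,O_{\mathbb{P}^{1}}(-1)\otimes\pi^{\ast})\colon\mathcal{A}^{q}(X)\oplus\mathcal{A}^{q}(X)\xrightarrow{\ \simeq\ }\mathcal{A}^{q}(\mathbb{P}^{1}_{X})$ of Definition 3.5. For $\mathcal{K}$ this specializes to the projective space bundle theorem of [30]; for $\mathcal{HN}$ it is the additivity of Hochschild and cyclic-type homology (Keller [18,20]) applied to the same decomposition.

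\emph{Main obstacle.} The genuinely delicate point is \'etale excision for $\mathcal{HN}$: because negative cyclic homology is not itself a localizing invariant --- only a homotopy limit of one --- one cannot argue ``localization plus derived excision'' directly, but must transport descent from $\mathcal{HH}$ through the homotopy-fixed-point construction, checking that the geometric $\mathcal{HN}(X\text{ on }Z)$ is computed from $\mathbf{Perf}(X)$ and $\mathbf{Perf}(X-Z)$ compatibly with this, and that for quasi-compact quasi-separated $X$ the geometric and dg-categorical versions of $\mathcal{HN}$ agree in the first place. These comparison statements are what [5] supplies; granting them, the remainder of the argument is formal.
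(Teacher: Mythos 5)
Your proposal is correct in substance, but it is a genuinely different (and much more detailed) route than the paper takes: the paper's entire proof of this theorem is a citation --- Thomason--Trobaugh [30] for $\mathcal{K}$ and Corti\~nas--Haesemeyer--Weibel [5] for $\mathcal{HN}$ --- whereas you reconstruct the statements from the dg-categorical formalism: \'etale (Nisnevich-type) excision from the identification $f^{\ast}\colon \mathbf{Perf}_{Z}(X)\simeq\mathbf{Perf}_{Z}(X')$ together with localization, and the projective bundle formula from Beilinson's semiorthogonal decomposition of $\mathbf{Perf}(\mathbb{P}^{1}_{X})$ together with additivity. What your approach buys is a uniform argument treating $\mathcal{K}$ and $\mathcal{HN}$ on the same footing, making visible exactly which structural inputs (localization, additivity, the comparison of geometric and dg-categorical $\mathcal{HN}$) are needed; what the paper's citation buys is that precisely these inputs are nontrivial theorems of [30], [18,20] and [5], so nothing needs to be re-proved. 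Two points in your write-up deserve care. First, the equivalence $\mathbf{Perf}_{Z}(X)\simeq\mathbf{Perf}_{Z}(X')$ for \'etale $f$ restricting to an isomorphism over $Z$ is itself the key excision input and should be attributed (it is where Thomason--Trobaugh's techniques, or alternatively Geller--Weibel's \'etale descent for $HH$, enter); it is not a formal consequence of the square being a ``pullback.'' Second, your framing of the ``main obstacle'' slightly overstates the difficulty: by Keller's theorem the mixed complex is a localizing invariant, and since the negative cyclic construction is a homotopy limit it preserves fibre sequences, so $\mathcal{HN}$ of dg-categories is in fact localizing; the genuine subtlety (which the paper itself flags in the remark after Theorem 3.14) is rather that $\mathcal{HN}$ does not commute with filtered colimits, which affects the coniveau-spectral-sequence bookkeeping, not the excision or projective bundle statements themselves.
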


\begin{proof}
For the Thomason-Trobaugh K-theory spectrum $\mathcal{K}$, see [30]. For negative cyclic homology $\mathcal{HN}$, see[5].
\end{proof}
\begin{theorem}[7]
 Let $X$ be a noetherian scheme with finite dimension and $\mathcal{A}$ is a functor as above. If $\mathcal{A}$ satisfies $\mathcal{A}^{q}(\emptyset) = 0$ and Zariski excision.
then there exist two strongly convergent spectral sequences:

1.Brown-Gersten spectral sequence (or Descent spectral sequence)
\[
 E_{2}^{p,q} = H_{Zar}^{p}(X,\underline{\mathcal{A}^{q}}) \Longrightarrow \mathcal{A}^{p+q}(X).
\]

2.Coniveau spectral sequence
\[
 E_{1}^{p,q} = \bigoplus_{x \in X^{p}}\mathcal{A}^{p+q}(X \ on \ x) \Longrightarrow \mathcal{A}^{p+q}(X)
\]
where $\mathcal{A}^{p+q}(X)(X \ on \ x) = \varinjlim_{x \in U}\mathcal{A}^{p+q}(U \ on \ \{x\}^{-}\cap U)$.
\end{theorem}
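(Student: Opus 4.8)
The plan is to build each spectral sequence by a standard mechanism — a finite tower (equivalently, an exact couple) for the coniveau sequence, and the hypercohomology filtration of a descent-theoretic replacement for the Brown--Gersten sequence — feeding in Zariski excision and the vanishing $\mathcal{A}^{q}(\emptyset)=0$ at precisely the two places where they are needed. I will treat the spectrum-valued case explicitly; the chain-complex-valued case is entirely parallel, with ``homotopy fibre'' replaced by ``shifted mapping cone'', ``homotopy colimit'' by filtered colimit of complexes, and the exact couple by the spectral sequence of an exhaustive finite filtration.

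\textbf{The coniveau spectral sequence.} First I would filter $X$ by codimension of support. For $p\ge 0$ set $\mathcal{A}_{\ge p}(X)=\operatorname{hocolim}_{Z}\mathcal{A}(X\text{ on }Z)$, the homotopy colimit over the filtered poset of closed subschemes $Z\subseteq X$ of codimension $\ge p$, ordered by inclusion and made covariant via the ``enlarge the support'' maps. Since $X$ is the terminal object of the poset when $p=0$ and $\mathcal{A}(\emptyset)\simeq\ast$ by hypothesis, $\mathcal{A}_{\ge 0}(X)=\mathcal{A}(X\text{ on }X)=\mathcal{A}(X)$; and since a noetherian $X$ of finite dimension has no nonempty closed subset of codimension exceeding $\dim X$, one has $\mathcal{A}_{\ge p}(X)=\mathcal{A}(X\text{ on }\emptyset)\simeq\ast$ for $p>\dim X$, so the tower $\{\mathcal{A}_{\ge p}(X)\}$ is finite. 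For the successive layers, the octahedral axiom applied to a pair $Z\subseteq Z'$ of closed subsets produces a homotopy fibre sequence
\[
\mathcal{A}(X\text{ on }Z)\longrightarrow\mathcal{A}(X\text{ on }Z')\longrightarrow\mathcal{A}(X\setminus Z\text{ on }Z'\setminus Z);
\]
taking the (filtered, hence exact) homotopy colimit over all such pairs with $\operatorname{codim}Z\ge p+1$ and $\operatorname{codim}Z'\ge p$ yields a fibre sequence $\mathcal{A}_{\ge p+1}(X)\to\mathcal{A}_{\ge p}(X)\to L_{p}$. Zariski excision identifies the layer: localizing $X\setminus Z$ at the codimension-$p$ generic points of $Z'$ and shrinking $X$ to small open neighbourhoods, the colimit defining $L_{p}$ collapses to $\bigoplus_{x\in X^{(p)}}\varinjlim_{x\in U}\mathcal{A}(U\text{ on }\overline{\{x\}}\cap U)$, i.e. $\bigoplus_{x\in X^{(p)}}\mathcal{A}(X\text{ on }x)$. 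The exact couple of this finite tower then gives a strongly convergent spectral sequence with $E_{1}^{p,q}=\bigoplus_{x\in X^{(p)}}\mathcal{A}^{p+q}(X\text{ on }x)$, abutting to $\pi_{-(p+q)}\mathcal{A}(X)=\mathcal{A}^{p+q}(X)$.

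\textbf{The Brown--Gersten spectral sequence.} The first step is to upgrade Zariski excision to the Mayer--Vietoris property: for an open cover $X=U\cup V$, put $Z=X\setminus V$, which is closed and contained in $U$; comparing the localization fibre sequences $\mathcal{A}(X\text{ on }Z)\to\mathcal{A}(X)\to\mathcal{A}(V)$ and $\mathcal{A}(U\text{ on }Z)\to\mathcal{A}(U)\to\mathcal{A}(U\cap V)$ and using the excision equivalence $\mathcal{A}(X\text{ on }Z)\xrightarrow{\sim}\mathcal{A}(U\text{ on }Z)$ shows that the square with vertices $\mathcal{A}(X),\mathcal{A}(U),\mathcal{A}(V),\mathcal{A}(U\cap V)$ is homotopy cartesian. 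By the Brown--Gersten theorem (in its presheaf-of-spectra form, and its complex-of-sheaves analogue), on a noetherian space of finite Krull dimension a presheaf with the Mayer--Vietoris property satisfies Zariski hyperdescent, so the canonical map $\mathcal{A}(X)\xrightarrow{\sim}R\Gamma_{\mathrm{Zar}}(X,a\mathcal{A})$ is an equivalence, where $a\mathcal{A}$ denotes the objectwise Zariski sheafification. The skeletal (Godement) filtration of $R\Gamma_{\mathrm{Zar}}(X,a\mathcal{A})$ then produces the descent spectral sequence $E_{2}^{p,q}=H^{p}_{\mathrm{Zar}}(X,\underline{\mathcal{A}^{q}})\Rightarrow\mathcal{A}^{p+q}(X)$, with $\underline{\mathcal{A}^{q}}$ the Zariski sheafification of $U\mapsto\mathcal{A}^{q}(U)$; Grothendieck's vanishing theorem $H^{p}_{\mathrm{Zar}}(X,-)=0$ for $p>\dim X$ confines the $E_{2}$-page to finitely many columns, giving strong convergence.

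\textbf{Main obstacle.} The single genuinely non-formal ingredient is the Brown--Gersten descent theorem itself — deriving full Zariski hyperdescent from one Mayer--Vietoris square on a finite-dimensional noetherian scheme, which is proved by a noetherian induction on the locus where descent fails. Everything else is bookkeeping: the octahedral-axiom fibre sequences, exactness of filtered homotopy colimits, the excision-based identification of the coniveau $E_{1}$-term, and Grothendieck vanishing for convergence. A minor additional nuisance is carrying the spectrum-valued and chain-complex-valued cases in uniform notation, which is purely cosmetic. Since the statement is due to [7] with the descent input supplied by [30], in the paper I would simply cite [7] (theorem 3.6 and corollary 3.12 there) together with [30]; the outline above is the route those references take.
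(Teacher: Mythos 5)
Your proposal is correct and follows essentially the same route as the paper, which states this result as a citation to [7] and then only unpacks the coniveau construction as a direct limit of exact couples over chains of closed subsets $Z^{\bullet}$ — the same filtration-by-codimension-of-support mechanism you package as the finite tower $\mathcal{A}_{\ge p}(X)$ with layers identified by Zariski excision. For the Brown--Gersten half the paper gives no argument beyond the citation, and the Mayer--Vietoris-plus-hyperdescent argument you supply is the standard one underlying [7] and [30], so there is nothing to object to.
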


We explain a little bit about coniveau spectral sequence. Let
\[
 Z^{\bullet}: \ Z^{d} \subset Z^{d-1} \subset \dots \subset Z^{0}=X
\]
be a chain of closed subsets of $X$, where $codim_{X}(Z^{p})\geq p$. For a pair$(Z^{p+1} \subset Z^{p})$, the homotopy fibration 
\[
 \mathcal{A}(X \ on \ Z^{p+1}) \to \mathcal{A}(X \ on \ Z^{p}) \to \mathcal{A}(X - Z^{p+1} \ on \ Z^{p}-Z^{p+1})
\]
induces a long exact sequence:
{\footnotesize
\begin{align*}
\dots \to &\mathcal{A}^{q}(X \ on \ Z^{p+1}) \to \mathcal{A}^{q}(X \ on \ Z^{p}) \to \mathcal{A}^{q}(X - Z^{p+1} \ on \ Z^{p}-Z^{p+1}) \\
&\to \mathcal{A}^{q+1}(X \ on \ Z^{p+1}) \to \dots
\end{align*}
}
We can construct an exact couple from the above by setting 
 $D^{p,q} = \mathcal{A}^{p+q}(X \ on \ Z^{p})$ and $E^{p,q} = \mathcal{A}^{p+q}(X - Z^{p+1} \ on \ Z^{p} - Z^{p+1})$. 

Order the set of $(d+1)$-tuples $Z^{\bullet}$ by $Z^{\bullet} \leq Z^{'\bullet}$ if $Z^{p} \subset Z^{'p}$ for all $p$.
Passing to direct limit, we get a new exact couple with (a direct limit of exact couples is still an exact couple)
\[
D_{1}^{p,q} = \varinjlim _{Z^{\bullet}}\mathcal{A}^{p+q}(X \ on \ Z^{p})
\]
\[
 E_{1}^{p,q} = \varinjlim _{Z^{\bullet}}\mathcal{A}^{p+q}(X - Z^{p+1} \ on \ Z^{p} - Z^{p+1})= \bigoplus_{x \in X^{(p)}}\mathcal{A}^{p+q}(X \ on \ x)
\]
where  $X^{(p)}$ denotes the set of points of codimension $p$ in $X$
and
\[
\mathcal{A}^{p+q}(X)(X \ on \ x) = \varinjlim_{x \in U}\mathcal{A}^{p+q}(U \ on \ \{x\}^{-}\cap U).
\].

The spectral sequence associated to this new exact couple is the coniveau spectral sequence:
\[
 E_{1}^{p,q} = \bigoplus_{x \in X^{p}}\mathcal{A}^{p+q}(X \ on \ x) \Longrightarrow \mathcal{A}^{p+q}(X).
\]
Its $E_{1}$-terms give rise to Cousin complexes:
{\small
\[
 0 \to \bigoplus_{x \in X^{(0)}}\mathcal{A}^{q}(X \ on \ x) \xrightarrow{d_{1}^{0,q}} \bigoplus_{x \in X^{(1)}}\mathcal{A}^{q+1}(X \ on \ x) \xrightarrow{d_{1}^{1,q}} \bigoplus_{x \in X^{(2)}}\mathcal{A}^{q+2}(X \ on \ x) \to \dots
\]
}
We have the following augmented Gersten sequence:
\[
 0 \to D_{1}^{0,q} \to E_{1}^{0,q} \to E_{1}^{1,q} \to E_{1}^{2,q} \to \dots
\]
which means
\[
 0 \to \mathcal{A}^{q}(X) \to \bigoplus_{x \in X^{(0)}}\mathcal{A}^{q}(X \ on \ x) \to \bigoplus_{x \in X^{(1)}}\mathcal{A}^{q+1}(X \ on \ x) \to \bigoplus_{x \in X^{(2)}}\mathcal{A}^{q+2}(X \ on \ x) \to \dots
\]
We are interested in when the sheafified Gersten sequence 
\[
 0 \to \mathcal{A}^{q}(X) \to \bigoplus_{x \in X^{(0)}}\underline{\mathcal{A}}^{q}(X \ on \ x) \to \bigoplus_{x \in X^{(1)}}\underline{\mathcal{A}}^{q+1}(X \ on \ x) \to \bigoplus_{x \in X^{(2)}}\underline{\mathcal{A}}^{q+2}(X \ on \ x) \to \dots
\]
is exact, where $\bigoplus_{x \in X^{(p)}}\underline{\mathcal{A}}^{n}(X \ on \ x)$  is the sheaf associated to the presheaf, for all $n$ and $p$,
\[
 U \to \bigoplus_{x \in U^{(p)}}\mathcal{A}^{n}(U \ on \ x).
\]
In other words, $\bigoplus_{x \in X^{(p)}}\underline{\mathcal{A}}^{n}(X \ on \ x)$ is the flasque sheaf $\bigoplus_{x \in X^{(p)}}j_{x\ast}\mathcal{A}^{n}(X \ on \ x)$, where 
$j_{x}$ is the immersion $\{x\} \to X$.

The following Effacement theorem tells us when the sheafified Gersten sequence is exact.
\begin{definition}[7]
let $X$ be a scheme over $k$ and $t$ is a point in $X$.
A functor 
\[
 \mathcal{A}: \ Sch^{op}/k \to Abelian \ groups
\]
 is called effaceable at $(X,t)$, if the following conditions are satisfied:

Given any integer $p\geq 0$, a closed subvariety $Z$ of codimension at least $p+1$,
and $t \in Z$, there exists an open neighbourhood $U$of $t$, $U \subset X$, and a closed subset $Z^{'}$ containing $Z$ such that
\[
 codim_{X}(Z^{'}) \geq p
\]
and 
\[
 \mathcal{A}^{n}(U \ on \ Z\cap U) \xrightarrow{0} \mathcal{A}^{n}(U \ on \ Z^{'}\cap U)
\]
for all $n \geq 0$.
\end{definition}

Now we state the effacement theorem due to Gabber.
\begin{theorem}[7]
 Effacement theorem

Let $\mathcal{A}$ be a functor from the category $Sch^{op}/k$ , $k$ is an infinite field, to
spectra or chain complexes:
\[
 \mathcal{A} : Sch^{op}/k \to spectra
\]
or
\[
 \mathcal{A} : Sch^{op}/k \to chain \  complexes
\]

If $\mathcal{A}$ satisfies \'etale excision and projective bundle formula, then for any given $q$, $\mathcal{A}^{q}$ is effaceable at $(X,t)$,
where $X/k$ is smooth at $t$.
\end{theorem}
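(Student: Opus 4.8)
The plan is to reproduce Gabber's proof of the effacement theorem in the axiomatic form of Colliot-Th\'el\`ene--Hoobler--Kahn, using only the two hypotheses on $\mathcal{A}$ (\'etale excision and the projective bundle formula for $\mathbb{P}^{1}$). The assertion is Zariski-local at $t$, so I first replace $X$ by a smooth affine neighbourhood of $t$ and set $d = \dim_{t} X$. Since both axioms are imposed on the groups $\mathcal{A}^{q}$ and every step below is a manipulation of the long exact localization sequences, it is harmless to argue throughout with the abelian-group-valued functors $\mathcal{A}^{q}$, which is also the form in which effaceability is defined.

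The geometric heart is Gabber's presentation lemma. Given $Z \subseteq X$ closed with $\operatorname{codim}_{X} Z \ge p+1$ and $t \in Z$, one shrinks $X$ to an open neighbourhood $U$ of $t$ equipped with a smooth morphism $f : U \to S$ onto a smooth $k$-scheme $S$ of dimension $d-1$ with one-dimensional fibres, together with an open immersion $U \hookrightarrow \overline{U}$ into a proper smooth $S$-curve $\overline{f} : \overline{U} \to S$, in such a way that $Z$ is finite over $S$ (hence closed in $\overline{U}$), that $\overline{U} \setminus U$ is the image of a section $s_{\infty}$ of $\overline{f}$ with $s_{\infty}(S) \cap Z = \emptyset$, and that, after one further application of \'etale excision, $\overline{f}$ may be taken to be the projection $\mathbb{P}^{1}_{S} \to S$ with $s_{\infty}$ the section at infinity. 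This is the one place the hypothesis that $k$ is infinite is used (generic linear projections). Now put $T := f(Z)$, which is closed in $S$ of dimension $\dim Z \le d-p-1 = \dim S - p$ because $Z \to S$ is finite, and set $Z' := f^{-1}(T) \subseteq U$; since $f$ has one-dimensional fibres $\dim Z' \le d-p$, so $\operatorname{codim}_{U} Z' \ge p$ and $Z \subseteq Z'$. The theorem then reduces to the claim that the change-of-support map $\mathcal{A}^{n}(U \text{ on } Z) \to \mathcal{A}^{n}(U \text{ on } Z')$ vanishes for every $n$.

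To prove the claim I pass to $\overline{U} = \mathbb{P}^{1}_{S}$. \'Etale (Zariski) excision along the open immersion $U \hookrightarrow \overline{U}$ identifies $\mathcal{A}^{n}(U \text{ on } Z)$ with $\mathcal{A}^{n}(\overline{U} \text{ on } Z)$, and functoriality of the localization sequences then identifies our map with the composite $\mathcal{A}^{n}(\overline{U} \text{ on } Z) \to \mathcal{A}^{n}(\overline{U} \text{ on } \overline{U}_{T}) \to \mathcal{A}^{n}(U \text{ on } Z')$, where $\overline{U}_{T} := \overline{f}^{-1}(T) = \mathbb{P}^{1}_{T}$ is the vertical divisor over $T$ (note $\overline{U}_{T} \cap U = Z'$). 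It therefore suffices to show that spreading the support of a class from $Z$ out to $\overline{U}_{T}$ annihilates it. Here the projective bundle formula enters: applied to $\overline{f}$ and to its restriction over $S \setminus T$ it gives $\mathcal{A}^{n}(\overline{U}) \cong \mathcal{A}^{n}(S)^{\oplus 2}$ and $\mathcal{A}^{n}(\overline{U} \setminus \overline{U}_{T}) \cong \mathcal{A}^{n}(S \setminus T)^{\oplus 2}$ compatibly with restriction, so that $\mathcal{A}^{n}(\overline{U} \text{ on } \overline{U}_{T}) \cong \mathcal{A}^{n}(S \text{ on } T)^{\oplus 2}$, the two copies being cut out by $\overline{f}^{\ast}$ and by the twist $\big( \mathcal{O}(-1) \otimes \overline{f}^{\ast} \big)$. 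Pulling back along $s_{\infty}$ sends the image of a class supported on $Z$ into $\mathcal{A}^{n}(S \text{ on } s_{\infty}^{-1}Z) = \mathcal{A}^{n}(S \text{ on } \emptyset) = 0$, which kills the component detected by $s_{\infty}^{\ast}$; the complementary component is disposed of by the homotopy-invariance consequence of the projective bundle formula, which allows a class whose support is finite over $S$ and disjoint from $s_{\infty}(S)$ to be retracted onto $s_{\infty}(S)$. Hence the class dies, and effaceability follows.

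The step I expect to be the main obstacle is Gabber's presentation lemma itself: producing, over an infinite field, the curve fibration $U \to S$ with $Z$ finite over $S$ and a disjoint section at infinity, and arranging the compactification to be (after a further \'etale-excision reduction) the trivial $\mathbb{P}^{1}$-bundle, is the substantial geometric content and the sole place infiniteness of $k$ is invoked. Two secondary technical points also need care: making the projective bundle formula meaningful for the chain-complex-valued theory $\mathcal{HN}$ --- the twist $\mathcal{O}(-1) \otimes \pi^{\ast}$ must be interpreted through the Chern-class / $\mathcal{K}$-module structure rather than by naive $\mathbb{A}^{1}$-invariance --- and checking that all the excision and localization identifications are natural, so that the whole construction is functorial; this last point is what makes the coniveau spectral sequences of $\mathcal{K}$ and $\mathcal{HN}$ comparable through the Chern character in the following sections.
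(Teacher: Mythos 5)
The paper offers no proof of this statement --- it is quoted directly from Colliot-Th\'el\`ene--Hoobler--Kahn [7] --- and your sketch reproduces precisely the Gabber-style argument of that reference (presentation lemma over an infinite field, \'etale/Zariski excision to transfer the class to $\mathbb{A}^{1}_{S}$, compactification to $\mathbb{P}^{1}_{S}$, projective bundle decomposition, evaluation at the section at infinity), so you are following the same route the paper delegates to its citation. The one step to tighten is the very last one: the two summands of $\mathcal{A}^{n}(\mathbb{P}^{1}_{S} \text{ on } \mathbb{P}^{1}_{T}) \cong \mathcal{A}^{n}(S \text{ on } T)^{\oplus 2}$ are not killed separately by $s_{\infty}^{\ast}$ and by a ``retraction''; rather, writing the image of the class as $\pi^{\ast}a + \bigl(\mathcal{O}(-1)\otimes\pi^{\ast}\bigr)b$, the vanishing of $s_{\infty}^{\ast}$ (because the support misses infinity) forces $a+b=0$, while the restriction to $\mathbb{A}^{1}_{S}$ equals $\pi^{\ast}(a+b)$ since $\mathcal{O}(-1)$ trivializes on $\mathbb{A}^{1}_{S}$, and hence that restriction vanishes.
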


And we have the following corollary due to the definition of effaceable functor.
\begin{corollary}[7]
If $\mathcal{A}^{n}(X)$ is effaceable at $(X,t)$, for arbitrary $t \in X$, then 
\[
 a_{Zar}D_{1}^{p,q} \xrightarrow{i=0} a_{Zar}D_{1}^{p-1,q+1}
\]
for any $p \geq 0$, where $a_{Zar}D_{1}^{p,q}$ is the Zariski sheafification of $D_{1}^{p,q}$. Hence, the 
sheafified Gersten complexes are exact because of the following fact. 
\end{corollary}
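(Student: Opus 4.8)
The plan is to establish the two assertions in the corollary separately. First I would show that the sheafified map $a_{Zar}D_{1}^{p,q}\to a_{Zar}D_{1}^{p-1,q+1}$ vanishes by a stalkwise computation built on the Effacement theorem; then I would deduce the exactness of the sheafified Gersten complex from this vanishing by the purely formal behaviour of an exact couple of sheaves (this last step being precisely the ``following fact'' alluded to in the statement). For $p=0$ the target $a_{Zar}D_{1}^{-1,q+1}$ is zero and there is nothing to prove, so one may assume $p\geq 1$.

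For the vanishing: recall that, regarded as a presheaf via the analogous construction over open subsets, $D_{1}^{p,q}$ is $U\mapsto\varinjlim_{Z}\mathcal{A}^{p+q}(U \text{ on } Z\cap U)$, the colimit over closed $Z\subseteq X$ of codimension $\geq p$, and that $i\colon D_{1}^{p,q}\to D_{1}^{p-1,q+1}$ is induced on these colimits by the functoriality maps $\mathcal{A}^{p+q}(U \text{ on } Z\cap U)\to\mathcal{A}^{p+q}(U \text{ on } Z'\cap U)$ attached to inclusions $Z\subseteq Z'$ of closed subsets. Since a morphism of Zariski sheaves is zero iff it vanishes on all stalks, I would fix $t\in X$ and a germ of $D_{1}^{p,q}$ at $t$, represented by some $\alpha\in\mathcal{A}^{p+q}(V \text{ on } Z)$ with $V\ni t$ open and $Z\subseteq V$ closed of codimension $\geq p$. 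If $t\notin Z$, then $\alpha$ already restricts to $0$ on $V\setminus Z$, because $\mathcal{A}^{\bullet}(W \text{ on }\emptyset)=0$ for every $W$ (it is the homotopy fibre of an equivalence, resp. the cone of an isomorphism). If $t\in Z$, then $X$ is smooth at $t$, so $\mathcal{A}$ is effaceable at $(X,t)$ by the Effacement theorem (Theorem~3.8); applying the defining property (Definition~3.7) with integer parameter $p-1$ — legitimate since $\mathrm{codim}(Z)\geq p=(p-1)+1$ — yields an open $U\subseteq V$ containing $t$ and a closed $Z'\supseteq Z$ with $\mathrm{codim}(Z')\geq p-1$ such that $\mathcal{A}^{n}(U \text{ on } Z\cap U)\xrightarrow{0}\mathcal{A}^{n}(U \text{ on } Z'\cap U)$ for all $n$. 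Since $\mathrm{codim}(Z')\geq p-1$, the group $\mathcal{A}^{p+q}(U \text{ on } Z'\cap U)$ occurs among the terms of the colimit defining $D_{1}^{p-1,q+1}(U)$, so $\alpha$ maps to $0$ there and its germ at $t$ vanishes. Hence $a_{Zar}(i)=0$ in every bidegree.

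For the exactness: I would sheafify the exact couple of presheaves underlying Theorem~3.6 termwise; as Zariski sheafification is exact, the result is an exact couple of sheaves with maps $\underline{i},\underline{j},\underline{k}$, and the vanishing $\underline{i}=0$ just proved forces, by exactness of the couple, that $\underline{j}$ is a monomorphism and $\underline{k}$ an epimorphism. Since the $E_{1}$-differential of the coniveau spectral sequence is $d_{1}=\underline{j}\circ\underline{k}$, a short diagram chase then gives $\ker d_{1}^{p,q}=\ker\underline{k}^{p,q}=\mathrm{im}\,\underline{j}^{p,q}=\mathrm{im}\,d_{1}^{p-1,q}$ for $p\geq 1$, and $\ker d_{1}^{0,q}=\mathrm{im}\,\underline{j}^{0,q}\cong\underline{D_{1}^{0,q}}=\underline{\mathcal{A}^{q}}$ at $p=0$, with the isomorphism realized by the augmentation map; thus the augmented sheafified Gersten complex is exact.

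The main obstacle is the Effacement theorem itself, which is cited here as Theorem~3.8 and may be taken as given; granting it, the only points requiring care are matching the integer parameter of Definition~3.7 against the bidegree $(p,q)$ and the separate treatment of the case $t\notin Z$, while the concluding exact-couple argument is entirely formal.
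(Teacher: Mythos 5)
Your argument is correct and is exactly the standard Colliot-Th\'el\`ene--Hoobler--Kahn argument that the paper itself does not reproduce but simply cites from [7]: a stalkwise application of the effaceability property with parameter $p-1$ to kill the sheafified map $i$, followed by the formal exact-couple deduction of exactness (which the paper likewise defers to the ``following fact,'' Theorem~3.11). Your handling of the boundary case $p=0$ (where the stated range ``$p\geq 0$'' really only makes sense if one reads $D_1^{-1,q+1}$ as $0$, the substantive range being $p\geq 1$) is a reasonable repair of the paper's indexing.
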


\begin{theorem}[7]
The following statements are equivalent:

1.The sheafified Gersten sequence is exact.

2.For any $p \geq 0$ 
\[
 a_{Zar}D_{1}^{p,q} \xrightarrow{i=0} a_{Zar}D_{1}^{p-1,q+1}.
\]

3.
\[
 \varinjlim_{(Z^{p+1}\subset Z^{p})\leq (X^{p+1}\subset X^{p})}\mathcal{A}^{p+q}(X \ on \ Z^{p+1}) \xrightarrow{i=0} \varinjlim_{(Z^{p+1}\subset Z^{p})\leq (X^{p+1}\subset X^{p})}\mathcal{A}^{p+q}(X \ on \ Z^{p}).
\]
\end{theorem}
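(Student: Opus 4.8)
The plan is to read the statement off the coniveau exact couple of $\mathcal{A}$ after Zariski sheafification; the geometric input has already been spent (in the effacement theorem) to produce hypothesis $(2)/(3)$, so what is left is a formal fact about the associated derived couple.

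First I would sheafify. Since $a_{Zar}$ is exact, applying it to the long exact sequences
\[
 \cdots\to D_1^{p+1,q-1}\xrightarrow{i}D_1^{p,q}\xrightarrow{j}E_1^{p,q}\xrightarrow{k}D_1^{p+1,q}\xrightarrow{i}D_1^{p,q+1}\to\cdots
\]
of the coniveau exact couple gives an exact couple of Zariski sheaves with the same $i,j,k$. Here $a_{Zar}D_1^{0,q}=\underline{\mathcal{A}}^q$ (because $Z^0=X$ makes $D_1^{0,q}(U)=\mathcal{A}^q(U)$), the differential of the sheafified Cousin complex $a_{Zar}E_1^{\bullet,q}$ is $d_1=j\circ k$, and $a_{Zar}D_1^{p,q}=0$ whenever $p>\dim X$ (no closed subset has such codimension, and $\mathcal{A}^\bullet(\emptyset)=0$). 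Writing $a_{Zar}D_2^{p,q}$ for the image of $a_{Zar}i\colon a_{Zar}D_1^{p+1,q-1}\to a_{Zar}D_1^{p,q}$ --- the $D$-term of the derived couple --- condition $(2)$ is precisely the vanishing of $a_{Zar}D_2^{p,q}$ for all $p\ge 0$ and all $q$, since the image of a sheaf morphism is zero iff the morphism is. I would prove $(1)\Leftrightarrow(2)$ in this form and handle $(2)\Leftrightarrow(3)$ separately.

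For $(2)\Rightarrow(1)$, the implication the rest of the paper uses: vanishing of all $a_{Zar}D_2^{p,q}$ makes each $j$ injective and (the outgoing $i$ being zero) each $k$ surjective, so the long exact sequences break into short exact sequences $0\to a_{Zar}D_1^{p,q}\xrightarrow{j}a_{Zar}E_1^{p,q}\xrightarrow{k}a_{Zar}D_1^{p+1,q}\to 0$ for $p\ge 0$. Splicing these via $d_1=jk$ gives $\ker d_1^{p,q}=\operatorname{im}d_1^{p-1,q}$ for $p\ge 1$, while at $p=0$ the arrow $\underline{\mathcal{A}}^q=a_{Zar}D_1^{0,q}\to a_{Zar}E_1^{0,q}$ is injective with image $\ker d_1^{0,q}$; that is exactly the exactness of the sheafified Bloch-Gersten-Quillen sequence. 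Conversely, $(1)$ forces $a_{Zar}E_2^{p,q}=0$ for $p\ge 1$ and, by exactness at $\underline{\mathcal{A}}^q$, injectivity of $j^{0,q}$, i.e. $a_{Zar}D_2^{0,q}=0$. Feeding $a_{Zar}E_2^{p,q}=0$ $(p\ge 1)$ into the derived couple makes $i_2\colon a_{Zar}D_2^{p,q}\xrightarrow{\ \sim\ }a_{Zar}D_2^{p-1,q+1}$ an isomorphism for $p\ge 2$; iterating $a_{Zar}D_2^{p,q}\cong a_{Zar}D_2^{p+1,q-1}\cong\cdots$ up into the range $p>\dim X$ where $a_{Zar}D_2$ vanishes yields $a_{Zar}D_2^{p,q}=0$ for all $p\ge 1$, hence, together with the $p=0$ case, for all $p\ge 0$ --- this is $(2)$. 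Finally $(2)\Leftrightarrow(3)$ is bookkeeping: $a_{Zar}D_1^{p,q}$ is, by construction, the sheafification of the coniveau presheaf $U\mapsto\varinjlim_{Z^\bullet}\mathcal{A}^{p+q}(U\text{ on }Z^p)$, whose stalks are the direct limits displayed in $(3)$, and a morphism of Zariski sheaves vanishes iff it vanishes on every stalk.

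The delicate point is the boundary term $p=0$: the vanishing $a_{Zar}D_2^{0,q}=0$ is \emph{not} formal from the derived couple (its relevant outgoing map lands in $a_{Zar}D_2^{-1,q+1}\cong\underline{\mathcal{A}}^q\ne 0$, where it is injective but not zero), so it has to be drawn from the injectivity of the augmentation $\underline{\mathcal{A}}^q\hookrightarrow a_{Zar}E_1^{0,q}$; in practice this means keeping the two parts of ``$(1)$'' --- exactness at $\underline{\mathcal{A}}^q$ versus exactness further along the Cousin complex --- carefully apart. Finiteness of $\dim X$ enters only to terminate the iteration $a_{Zar}D_2^{p,q}\cong a_{Zar}D_2^{p+1,q-1}$; all the rest is elementary homological algebra on the sheafified exact couple, the genuine content having been supplied by the effacement theorem.
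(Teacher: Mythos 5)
Your argument is correct and is essentially the proof in Colliot-Th\'el\`ene--Hoobler--Kahn [7], which the paper cites for this theorem without reproducing a proof: the direction $(2)\Rightarrow(1)$ is exactly the splitting of the sheafified exact couple into short exact sequences $0\to a_{Zar}D_1^{p,q}\to a_{Zar}E_1^{p,q}\to a_{Zar}D_1^{p+1,q}\to 0$, the converse is the collapse $a_{Zar}D_2^{p,q}\cong a_{Zar}D_2^{p+1,q-1}$ terminated by finite Krull dimension, and your separate treatment of the $p=0$ augmentation term is the right point of care. The only caveats are interpretive rather than mathematical: the range $p\ge 0$ in condition (2) should be read as $p\ge 1$ in the target-indexed form $D_1^{p,q}\to D_1^{p-1,q+1}$ (equivalently, your $a_{Zar}D_2^{p,q}=0$ for $p\ge 0$), and condition (3) as printed is a single global limit over chains in $X$ rather than a stalkwise statement, so your local reading of it --- which is what [7] actually asserts and what the effacement theorem verifies --- is the one under which the equivalence holds.
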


\begin{corollary}[7]
 If the sheafified Gersten sequence is exact, then the $E_{2}$-term of he coniveau spectral sequence is 
\[
 E_{2}^{p,q} = H^{p}(E^{\bullet,q},d) = H_{Zar}^{p}(X,\underline{\mathcal{A}^{q}}) 
\]
We recall that 
\[
E_{2}^{p,q} = H^{p}(E^{\bullet,q},d) = \dfrac{ker(d_{1}^{p,q})}{Im(d_{1}^{p-1,q})}. 
\]
\end{corollary}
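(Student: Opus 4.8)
The plan is to read the exactness hypothesis as the statement that the unaugmented sheafified Gersten complex is a flasque resolution of the Zariski sheaf $\underline{\mathcal{A}^{q}}$, and then to compute $H^{\bullet}_{Zar}(X,\underline{\mathcal{A}^{q}})$ from this resolution, identifying the complex of its global sections with the $E_{1}$-row $(E^{\bullet,q},d_{1})$ of the coniveau spectral sequence. Concretely, by hypothesis the sequence of Zariski sheaves
\[
0 \to \underline{\mathcal{A}^{q}} \to \bigoplus_{x\in X^{(0)}}\underline{\mathcal{A}}^{q}(X\text{ on }x) \to \bigoplus_{x\in X^{(1)}}\underline{\mathcal{A}}^{q+1}(X\text{ on }x) \to \bigoplus_{x\in X^{(2)}}\underline{\mathcal{A}}^{q+2}(X\text{ on }x) \to \cdots
\]
is exact, and since $X$ is noetherian of finite dimension it is moreover bounded, of length $\dim X$. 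Each term $\mathcal{F}^{p}:=\bigoplus_{x\in X^{(p)}}\underline{\mathcal{A}}^{q+p}(X\text{ on }x)$ is by construction the flasque sheaf $\bigoplus_{x\in X^{(p)}}j_{x\ast}\mathcal{A}^{q+p}(X\text{ on }x)$: a pushforward of a sheaf from a point is flasque, and a direct sum of flasque sheaves on a noetherian space is again flasque. Hence $\mathcal{F}^{\bullet}$ is a flasque resolution of $\underline{\mathcal{A}^{q}}$.

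Since flasque sheaves are acyclic for the global-sections functor $\Gamma(X,-)$, the resolution $\mathcal{F}^{\bullet}$ computes Zariski sheaf cohomology, so $H^{p}_{Zar}(X,\underline{\mathcal{A}^{q}})=H^{p}\big(\Gamma(X,\mathcal{F}^{\bullet})\big)$. It remains to identify $\Gamma(X,\mathcal{F}^{\bullet})$ with the $E_{1}$-row. For a point $x$, the flasque sheaf $\underline{\mathcal{A}}^{n}(X\text{ on }x)=j_{x\ast}\mathcal{A}^{n}(X\text{ on }x)$ has global sections $\mathcal{A}^{n}(X\text{ on }x)=\varinjlim_{x\in U}\mathcal{A}^{n}(U\text{ on }\overline{\{x\}}\cap U)$, so $\Gamma(X,\mathcal{F}^{p})=\bigoplus_{x\in X^{(p)}}\mathcal{A}^{q+p}(X\text{ on }x)=E_{1}^{p,q}$; and, by naturality of the construction of the exact couple out of the localization fibrations, the differential induced on $\Gamma(X,\mathcal{F}^{\bullet})$ is exactly $d_{1}^{\bullet,q}$. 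Therefore
\[
E_{2}^{p,q}=\frac{\ker(d_{1}^{p,q})}{\mathrm{Im}(d_{1}^{p-1,q})}=H^{p}(E^{\bullet,q},d)=H^{p}_{Zar}(X,\underline{\mathcal{A}^{q}}),
\]
which is the assertion.

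The one step requiring genuine care is the last identification: checking that passing to global sections turns the sheaf differential of $\mathcal{F}^{\bullet}$ into precisely the coniveau differential $d_{1}$, not merely a map canonically isomorphic to it. This amounts to tracing the definition of $d_{1}$ through the connecting homomorphisms of the homotopy fibrations $\mathcal{A}(X\text{ on }Z^{p+1})\to\mathcal{A}(X\text{ on }Z^{p})\to\mathcal{A}(X-Z^{p+1}\text{ on }Z^{p}-Z^{p+1})$ and the direct limit over chains $Z^{\bullet}$ used to build the exact couple, together with the observation that the direct sums of flasque sheaves occurring here are flasque because $X$ is noetherian. Everything else is the standard homological fact that an acyclic resolution computes derived-functor cohomology.
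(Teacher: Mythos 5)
Your argument is correct and is exactly the intended one: the paper states this corollary without proof as a citation of Colliot-Th\'el\`ene--Hoobler--Kahn, and the standard proof there is precisely your observation that the exact sheafified Gersten complex is a flasque resolution of $\underline{\mathcal{A}^{q}}$ (direct sums of skyscraper-type pushforwards being flasque, and commuting with $\Gamma(X,-)$, on a noetherian space) whose global-section complex is the $E_{1}$-row with differential $d_{1}$. Your flagged points of care --- flasqueness of infinite direct sums and the identification of the induced differential with $d_{1}$ --- are the right ones and are handled correctly.
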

This corollary tells us that,under the above hypothesis, the $E_{2}$ pages of coniveau spectral sequence agree with those of Brown-Gersten spectral sequence.

\begin{corollary}[7]Universal exactness

Let $\mathcal{A}$ be a spectrum valued or complexes valued functor in above. For arbitrary scheme $T/k$($T$ might be singular), we can 
define a new functor $\mathcal{A_{T}}$
\[
 X \to \mathcal{A}(X\times T).
\]
If $\mathcal{A}$ satisfies \'etale excision and projective bundle formula, then so does the new functor $\mathcal{A}(X\times T)$.
This means that if $X$ is smooth, then $\mathcal{A_{T}}$ is effaceable.
\end{corollary}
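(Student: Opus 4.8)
The plan is to transport the two hypotheses of the effacement theorem from $\mathcal{A}$ to the base-changed functor $\mathcal{A}_{T}$ along the operation $-\times_{k}T$, and then to apply the effacement theorem (Theorem~3.8) to $\mathcal{A}_{T}$ itself. The starting observation is that for a closed subset $Z\subset X$ one has $(X-Z)\times T=(X\times T)-(Z\times T)$, so the homotopy fiber (resp.\ shifted cone) that defines $\mathcal{A}_{T}(X \ on \ Z)$ as the fiber of $\mathcal{A}(X\times T)\to\mathcal{A}((X-Z)\times T)$ is literally the one defining $\mathcal{A}(X\times T \ on \ Z\times T)$. Hence $\mathcal{A}_{T}(X \ on \ Z)\simeq\mathcal{A}(X\times T \ on \ Z\times T)$ naturally in $(X,Z)$, and in particular $\mathcal{A}_{T}^{q}(\emptyset)=\mathcal{A}^{q}(\emptyset)=0$.

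Next I would verify \'etale excision for $\mathcal{A}_{T}$. Given an excision square with $f\colon X'\to X$ \'etale and $X'\times_{X}Z\cong Z$, applying $-\times_{k}T$ produces a square with $f\times\mathrm{id}_{T}$ \'etale (base change preserves \'etale morphisms) and $(X'\times T)\times_{X\times T}(Z\times T)\cong(X'\times_{X}Z)\times T\cong Z\times T$. By \'etale excision for $\mathcal{A}$ on this new square, $\mathcal{A}^{q}(X\times T \ on \ Z\times T)\xrightarrow{\simeq}\mathcal{A}^{q}(X'\times T \ on \ Z\times T)$, which by the identification above is exactly $\mathcal{A}_{T}^{q}(X \ on \ Z)\xrightarrow{\simeq}\mathcal{A}_{T}^{q}(X' \ on \ Z)$. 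Since open immersions are \'etale, this in particular gives Zariski excision for $\mathcal{A}_{T}$, so $\mathcal{A}_{T}$ is a cohomology theory with support in the sense of Definition~3.1 and Theorem~3.6 applies to it.

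For the projective bundle formula I would use the canonical isomorphism $\mathbb{P}^{1}_{X}\times_{k}T\cong\mathbb{P}^{1}_{X\times_{k}T}$ (both sides being $\mathbb{P}^{1}_{k}\times_{k}X\times_{k}T$), which is compatible with the structure projections onto $X$ resp.\ $X\times T$ and identifies the pullback of $\mathcal{O}_{\mathbb{P}^{1}_{X}}(-1)$ with $\mathcal{O}_{\mathbb{P}^{1}_{X\times T}}(-1)$. Under this identification the pair of structure maps $(\pi^{\ast},\mathcal{O}(-1)\otimes\pi^{\ast})$ for $\mathcal{A}_{T}$ over $X$ becomes the corresponding pair for $\mathcal{A}$ over $X\times T$, so the projective bundle formula for $\mathcal{A}$ applied at $X\times T$ yields $\mathcal{A}_{T}^{q}(X)\oplus\mathcal{A}_{T}^{q}(X)\xrightarrow{\simeq}\mathcal{A}_{T}^{q}(\mathbb{P}^{1}_{X})$. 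With both axioms in hand for the functor $\mathcal{A}_{T}$, the effacement theorem (Theorem~3.8) applied to $\mathcal{A}_{T}$ shows that for every $q$, $\mathcal{A}_{T}^{q}$ is effaceable at $(X,t)$ whenever $X/k$ is smooth at $t$; taking $X$ smooth over $k$ then gives effaceability at every point, i.e.\ $\mathcal{A}_{T}$ is effaceable, which is the assertion.

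The subtle point is conceptual rather than computational. One must \emph{not} attempt to run Gabber's argument directly on the scheme $X\times T$, which is singular (as $T$ is) and to which the effacement theorem does not apply; instead one runs it on the new theory $\mathcal{A}_{T}$ over the smooth base $X$, where Gabber's geometric inputs (the presentation lemma, the embedding $\mathbb{A}^{1}\hookrightarrow\mathbb{P}^{1}$, and the cohomology of $\mathbb{P}^{1}$) involve only $X$. Once the two axioms have been transported along $-\times_{k}T$, the proof of the effacement theorem goes through verbatim. The only verifications requiring care are that $-\times_{k}T$ carries excision squares to excision squares and respects the structure projections and the twisting sheaf $\mathcal{O}(-1)$; these are routine.
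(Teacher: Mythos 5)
Your proposal is correct and follows exactly the route the paper (and its source [7]) intends: the corollary's content is precisely that \'etale excision and the projective bundle formula transport along $-\times_{k}T$ to the functor $\mathcal{A}_{T}$, after which the effacement theorem is applied to $\mathcal{A}_{T}$ over the smooth scheme $X$ rather than to $\mathcal{A}$ over the singular $X\times T$. The paper omits these routine verifications entirely, and you have supplied them correctly, including the key conceptual point about where Gabber's geometric argument actually runs.
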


\subsection{Chern character}
\label{Chern character}

Recall that the Eilenberg- Maclane functor sends complexes to spectra. So we obtain a spectrum associated to negative cyclic homology complex. Let's still call it $\mathcal{HN}$. The following fact is pointed out in [5]. As we will see later,
our tangent maps are induced from Chern Character. 
\begin{theorem}[5]
 There exists a Chern character from $\mathcal{K}$ spectrum to $\mathcal{HN}$ spectrum, 
\[
 Ch: \mathcal{K} \to \mathcal{HN}.
\]
\end{theorem}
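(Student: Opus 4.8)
The plan is to construct $Ch$ as a natural transformation of presheaves of spectra on $Sch^{op}/k$, obtained by applying the Eilenberg--MacLane functor to a natural map of presheaves of chain complexes, and then to identify the target with $\mathcal{HN}$. First I would recall that for an associative flat $k$-algebra $A$ (more generally for a scheme, by working with a suitable complex of sheaves computing Thomason--Trobaugh $K$-theory), the classical Dennis trace gives a map from $K$-theory to Hochschild homology, and that this trace lifts through the cyclotomic/negative-cyclic tower. Concretely, one uses the fact that negative cyclic homology $HN_{\ast}(A)$ is the homotopy limit (via the $S^1$-homotopy fixed points of $THH$, or on the purely algebraic side via Connes' $b,B$-bicomplex in the second-quadrant form) of the tower whose bottom layer is Hochschild homology; the Goodwillie/Jones construction of the Chern character $K(A)\to HN(A)$ factors the Dennis trace. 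I would cite Keller's construction [18,20] of the mixed complex, hence of the negative cyclic complex, for an arbitrary exact (or dg) category, which is exactly what is needed to sheafify: applying it to the category of perfect complexes on $X$ produces the presheaf $\mathcal{HN}$, and Keller shows this agrees with the usual negative cyclic homology of the ring in the affine case and satisfies localization.

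The key steps, in order, are: (i) fix the models --- $\mathcal K$ = Thomason--Trobaugh $K$-theory spectrum of perfect complexes, $\mathcal{HN}$ = Eilenberg--MacLane spectrum of Keller's negative cyclic complex of perfect complexes; (ii) produce, functorially in the exact/dg category $\mathcal C$, a map of spectra $K(\mathcal C)\to HN(\mathcal C)$ --- this is where one invokes that the cyclic homology of a category is defined and that the trace/Chern character is compatible with exact functors, so that it descends to a natural transformation on the level of presheaves on $Sch^{op}/k$; (iii) check naturality for pullback along morphisms of schemes, which reduces to functoriality of $\mathcal C\mapsto$ (its mixed complex) under the derived pullback of perfect complexes; (iv) observe, as already recorded in the excerpt, that both sides then automatically inherit the structure of ``cohomology theory with support'' and that $Ch$ induces the promised map of coniveau spectral sequences by passing to homotopy fibres $\mathcal A(X\ \mathrm{on}\ Z)$ and then to the exact couples --- this last part is purely formal once $Ch$ is a natural transformation of the underlying presheaves. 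Since the statement here is only the \emph{existence} of $Ch$ (the spectral-sequence consequence is phrased in the surrounding text as a corollary of naturality), the proof can legitimately be short: it is essentially a citation of [5], together with the remark that Keller's formalism [18,20] supplies the dg-categorical negative cyclic complex needed to make everything sheaf-theoretic.

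The main obstacle I expect is \emph{not} the construction in the affine case (that is classical Goodwillie--Jones) but the passage to schemes: one must know that the Dennis trace / Chern character is compatible with the localization and Zariski-descent properties that make $\mathcal K$ and $\mathcal{HN}$ into presheaves of spectra with the right local behavior, i.e. that $Ch$ commutes with the boundary maps in localization sequences. This is exactly the content that [5] (Corti\~nas--Haesemeyer--Schlichting--Weibel, or the relevant reference in the bibliography) supplies by building $Ch$ at the level of dg categories, where localization is built in; so in the write-up I would simply reduce to that reference rather than reprove it. A secondary, genuinely trivial point to address is that the Eilenberg--MacLane functor from chain complexes to spectra is lax monoidal and functorial, so applying it to Keller's negative cyclic complex of perfect complexes does yield a presheaf of spectra, and that $\pi_{-q}$ of it recovers $HN_q$; this justifies calling the target $\mathcal{HN}$ and is what lets the ``with support'' formalism of section 3.1 apply verbatim.
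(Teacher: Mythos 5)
Your proposal is correct and matches the paper's treatment: the paper offers no independent proof of this statement, but simply defines $\mathcal{HN}$ as the Eilenberg--MacLane spectrum of Keller's negative cyclic complex and cites [5] (Corti\~nas--Haesemeyer--Weibel) for the existence of $Ch:\mathcal{K}\to\mathcal{HN}$, exactly as you conclude. Your additional discussion of the Goodwillie--Jones/Dennis-trace construction and of Keller's dg-categorical formalism is accurate background for what [5] supplies, but it does not constitute a different route.
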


Now we state the main theorem of this section.
\begin{theorem}
 There exists the following commutative diagram between 
the sheafified Gersten sequences( $m$ is any integer, both sequences are flasque resolutions ):

\[
  \begin{CD}
      0 @. 0\\
      @VVV @VVV\\
      HN_{m}(O_{X}) @<Chern<< K_{m}(O_{X}) \\
      @VVV @VVV\\
      HN_{m}(k(X)) @<Chern<< K_{m}(k(X)) \\
      @VVV @VVV\\
      \oplus_{d \in X^{(1)}}\underline{HN}_{m-1}(O_{X,d} \ on \ d) @<Chern<< \oplus_{d \in X^{(1)}}\underline{K}_{m-1}(O_{X,d} \ on \ d) \\
      @VVV @VVV\\
     \oplus_{y \in X^{(2)}}\underline{HN}_{m-2}(O_{X,y} \ on \ y) @<Chern<< \oplus_{y \in X^{(2)}}\underline{K}_{m-2}(O_{X,y } \ on \ y)  \\
      @VVV @VVV\\
      \dots @<Chern<< \dots \\ 
      @VVV @VVV\\
     \oplus_{x \in X^{(n)}}\underline{HN}_{m-n}(O_{X,x} \ on \ x) @<Chern<< \oplus_{x \in X^{(n)}}\underline{K}_{m-n}(O_{X,x} \ on \ x)  \\
      @VVV @VVV\\
      0 @. 0
  \end{CD}
\]
\end{theorem}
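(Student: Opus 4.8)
The plan is to build the commutative diagram by applying the machinery of Section~3.1 to the two functors $\mathcal{K}$ and $\mathcal{HN}$ simultaneously, and then to exhibit the Chern character of Theorem~3.13 as a morphism of coniveau spectral sequences. First I would recall that, by Theorem~3.6, both $\mathcal{K}$ and $\mathcal{HN}$ satisfy \'etale excision and the projective bundle formula for $\mathbb{P}^1$, and that $\mathcal{K}^q(\emptyset)=\mathcal{HN}^q(\emptyset)=0$; since \'etale excision implies Zariski excision, Theorem~3.7 applies and yields, for each of the two functors, a coniveau spectral sequence whose $E_1$-page is the augmented Gersten complex. Applying the effacement theorem (Theorem~3.10) and Corollaries~3.11--3.12, the \emph{sheafified} Gersten complexes for $\mathcal{K}$ and for $\mathcal{HN}$ are exact except in degree $0$, i.e.\ each column of the claimed diagram is a flasque resolution: the sheaves $\bigoplus_{x\in X^{(p)}}j_{x*}\mathcal{A}^{p+q}(X\text{ on }x)$ are flasque by construction, and the complex resolves $\underline{\mathcal{A}^q}$. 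Taking $\mathcal{A}=\mathcal{K}$ with the appropriate degree shift identifies $\mathcal{A}^q(X\text{ on }x)$ with $K_{m-p}(O_{X,x}\text{ on }x)$ for $x\in X^{(p)}$ (and similarly $HN_{m-p}$ for $\mathcal{HN}$), giving exactly the two columns displayed.

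Next I would produce the horizontal maps. The Chern character $Ch:\mathcal{K}\to\mathcal{HN}$ of Theorem~3.13 is a natural transformation of functors $Sch^{op}/k\to\text{spectra}$ (composing with Eilenberg--MacLane on the $\mathcal{HN}$ side), so for every pair $(X,Z)$ it induces a map on homotopy fibers $\mathcal{K}(X\text{ on }Z)\to\mathcal{HN}(X\text{ on }Z)$, compatible with the localization fibration sequences $\mathcal{A}(X\text{ on }Z)\to\mathcal{A}(X)\to\mathcal{A}(X-Z)$. Passing to homotopy groups, then to the direct limit over chains $Z^{\bullet}$, and finally sheafifying, the Chern character therefore induces a morphism of exact couples, hence a morphism of coniveau spectral sequences and, term by term, a morphism of the sheafified Gersten complexes. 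On $E_1$-terms this is precisely the map $\bigoplus_{x\in X^{(p)}}\underline{K}_{m-p}(O_{X,x}\text{ on }x)\to\bigoplus_{x\in X^{(p)}}\underline{HN}_{m-p}(O_{X,x}\text{ on }x)$ labelled $Chern$ in the diagram, and on $D_1^{0,q}$ it is $K_m(O_X)\to HN_m(O_X)$. The squares commute because each square is obtained by applying the natural transformation $Ch$ to a square that is natural in the scheme, and sheafification is functorial; the outer squares involving the zero objects commute trivially.

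The main obstacle, and the point that needs the most care, is verifying that the differentials $d_1^{p,q}$ of the two coniveau spectral sequences genuinely commute with the Chern character maps after the direct-limit and sheafification steps --- in other words that $Ch$ respects the \emph{boundary} maps coming from the localization triangles, not merely the restriction maps. This is where one must invoke that $Ch$ is a map of \emph{spectra} (respectively a map in the derived category on the complex side), so that it commutes with the connecting maps $\mathcal{A}^q(X-Z^{p+1}\text{ on }Z^p-Z^{p+1})\to\mathcal{A}^{q+1}(X\text{ on }Z^{p+1})$ up to coherent homotopy; combined with the compatibility of homotopy fibers with filtered homotopy colimits (needed for the direct limit over $Z^{\bullet}$) and exactness of sheafification, this gives the commutativity of the Gersten differentials. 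A secondary technical point is checking that the degree bookkeeping $q\mapsto p+q$ matches on both sides so that the indices $m-1,m-2,\dots,m-n$ appear in the same rows; this is routine once one fixes the convention $\mathcal{A}^q=\pi_{-q}$ (resp.\ $H_{-q}$) from Definition~3.1 and the codimension filtration. Everything else --- flasqueness of the sheaves, exactness of the columns --- is already supplied by the cited results of Section~3.1.
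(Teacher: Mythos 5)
Your proposal is correct and follows essentially the same route as the paper: both columns are flasque resolutions by the effacement theorem applied to $\mathcal{K}$ and $\mathcal{HN}$ (via \'etale excision and the projective bundle formula), and the horizontal maps with their commuting squares come from the Chern character of Theorem 3.13 viewed as a map of spectra inducing a morphism of coniveau spectral sequences. The extra care you take with the boundary maps, direct limits, and sheafification only makes explicit what the paper's terser argument leaves implicit.
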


\begin{proof}
The above Chern character at spectrum level induces maps on coniveau spectral sequences:
\[
  E_{1}^{p,q} = \bigoplus_{x \in X^{(p)}}\mathcal{K}^{p+q}(O_{X,x} \ on \ x) \longrightarrow E_{1}^{p,q} = \bigoplus_{x \in X^{(p)}}\mathcal{HN}^{p+q}(O_{X,x} \ on \ x),
\]
where 
\[
\mathcal{K}^{p+q}(O_{X,x} \ on \ x) = \pi_{-p-q}(\mathcal{K}(O_{X,x} \ on \ x))=K_{-p-q}(O_{X,x} \ on \ x)
\]
and 
\[
\mathcal{HN}^{p+q}(O_{X,x} \ on \ x) = \pi_{-p-q}(\mathcal{HN}(O_{X,x} \ on \ x))=HN_{-p-q}(O_{X,x} \ on \ x).
\]

Both the non-connective K-theory $\mathcal{K}$ and the 
negative cyclic homology $\mathcal{HN}$  satisfy \'etale excision and projective bundle formula.
Thus, $K^{q}$ and $HN^{q}$ are effaceable, $\forall q$, for any smooth scheme $X/k$.

Combining with the Effacement theorem, we obtain the desired commutative diagram of 
the sheafified Gersten sequences.

\end{proof}

\textbf{Remark}. Here, $\mathcal{HN}^{p+q}(O_{X,x} \ on \ x)(= HN_{-p-q}(O_{X,x} \ on \ x))$ is defined in terms of the direct limit
\[
\mathcal{HN}^{p+q}(O_{X,x} \ on \ x) = \varinjlim_{x \in U}\mathcal{HN}^{p+q}(U \ on \ \overline{\{x\}}\cap U).
\]
It doesn't necessarily agree with the one defined in [5, example 2.7] since negative cyclic homology doesn't commute with direct limit. Thanks M.Schlichting for pointing out this.

By the corollary 3.12, we  know that that for any integer $q$,  $K^{q}(-\times T)$ and $HN^{q}(-\times T)$ are effaceable, for any $T/k$($T$ might be singular).
Hence, we also have the following result.
\begin{theorem}
There exists the following commutative diagram between the sheafified Gersten sequences(for any integer $m$, both sequences are flasque resolutions):
\[
  \begin{CD}
      0 @. 0\\
      @VVV @VVV\\
      HN_{m}(O_{X\times T}) @<Chern<< K_{m}(O_{X\times T}) \\
      @VVV @VVV\\
      HN_{m}(k(X)\times T) @<Chern<< K_{m}(k(X)\times T) \\
      @VVV @VVV\\
      \oplus_{d\times T \in X\times T^{(1)}}\underline{HN}_{m-1}(O_{X\times T,d\times T} \ on \ d\times T) @<Chern<< \oplus_{d \times T \in X\times T^{(1)}}\underline{K}_{m-1}(O_{X\times T,d\times T} \ on \ d\times T) \\
      @VVV @VVV\\
     \oplus_{y \times T \in X\times T^{(2)}}\underline{HN}_{m-2}(O_{X\times T,y\times T} \ on \ y\times T) @<Chern<< \oplus_{y\times T \in X\times T^{(2)}}\underline{K}_{m-2}(O_{X\times T,y\times T } \ on \ y\times T)  \\
      @VVV @VVV\\
      \dots @<Chern<< \dots \\ 
      @VVV @VVV \\
      \dots @<Chern<< \dots
  \end{CD}
\]
\end{theorem}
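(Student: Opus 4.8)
The plan is to run, essentially verbatim, the argument that proved the preceding theorem (Theorem 3.14), but with $\mathcal{K}$ and $\mathcal{HN}$ replaced throughout by their relative versions $\mathcal{K}_{T}\colon X\mapsto\mathcal{K}(X\times T)$ and $\mathcal{HN}_{T}\colon X\mapsto\mathcal{HN}(X\times T)$, and with the Chern character replaced by the natural transformation it induces after precomposition with $(-)\times T$. The entire content of the theorem is that every structural input used in Theorem 3.14 survives the operation $\mathcal{A}\mapsto\mathcal{A}(-\times T)$.

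First I would observe that $\mathcal{K}_{T}$ and $\mathcal{HN}_{T}$ are again ``cohomology theories with support'' in the sense of Definition 3.1. By Corollary 3.12, if $\mathcal{A}$ satisfies \'etale excision and the projective bundle formula for $\mathbb{P}^{1}$, then so does $X\mapsto\mathcal{A}(X\times T)$ for an arbitrary, possibly singular, $T/k$. Since $\mathcal{K}$ and $\mathcal{HN}$ satisfy both axioms by Theorem 3.5, so do $\mathcal{K}_{T}$ and $\mathcal{HN}_{T}$, and hence the Effacement Theorem (Theorem 3.8) applies to them: for every integer $q$ the functors $K^{q}(-\times T)$ and $HN^{q}(-\times T)$ are effaceable at $(X,t)$ whenever $X$ is smooth at $t$. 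I would stress that what is required is only that $X$ be smooth, \emph{not} that $X\times T$ be smooth; this is precisely why the method works, since in the intended applications $T=\operatorname{Spec} k[t]/(t^{j+1})$ is non-reduced, $X\times T$ is singular, and no appeal to the Gersten conjecture for regular local rings is available.

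Feeding effaceability back into Theorem 3.6 and Theorem 3.10, applied to $\mathcal{K}_{T}$ and to $\mathcal{HN}_{T}$ over the smooth variety $X$, forces the sheafified Gersten complex of the coniveau spectral sequence of $\mathcal{K}_{T}$ (resp.\ $\mathcal{HN}_{T}$) to be exact, with cohomology concentrated in degree $0$ where it is the Zariski sheaf $U\mapsto\mathcal{A}^{q}(U\times T)$. These are the two columns of the asserted diagram, and each is a flasque resolution because every term $\bigoplus_{x}\underline{\mathcal{A}}^{\bullet}(O_{X\times T,x\times T}\ \text{on}\ x\times T)$ is a direct sum of pushforwards $j_{x\ast}$ along immersions $\{x\}\to X$, hence flasque. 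For the horizontal arrows: the Chern character $Ch\colon\mathcal{K}\to\mathcal{HN}$ of Theorem 3.13 is a natural transformation of presheaves of spectra on $Sch^{op}/k$, so precomposing with $(-)\times T$ gives a natural transformation $\mathcal{K}_{T}\to\mathcal{HN}_{T}$; naturality yields a morphism of the exact couples attached to the codimension filtration on $X\times T$, hence a morphism of coniveau spectral sequences $E_{1}^{p,q}(\mathcal{K}_{T})\to E_{1}^{p,q}(\mathcal{HN}_{T})$, i.e.\ of Cousin complexes, and after Zariski sheafification a morphism of the sheafified Gersten sequences; compatibility with the vertical differentials is built into the exact-couple construction. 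Assembling the two columns with these Chern arrows produces the required diagram.

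I do not anticipate a genuine obstacle --- the statement is in effect a formal corollary of Theorem 3.14 together with Corollary 3.12, and the only non-formal ingredient, namely that \'etale excision and the projective bundle formula are preserved under $\mathcal{A}\mapsto\mathcal{A}(-\times T)$, is already available as Corollary 3.12 --- but two points need care. The first, already flagged in the Remark after Theorem 3.14, is that the relative groups with support $\mathcal{HN}^{p+q}(O_{X\times T,x\times T}\ \text{on}\ x\times T)$ must be interpreted via the Zariski-local colimit over open neighborhoods of $x$, exactly as in that Remark; since negative cyclic homology does not commute with filtered colimits this need not reduce to a naive stalk, but the effacement formalism and the passage to a colimit of exact couples are insensitive to the discrepancy, so the argument is unaffected. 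The second point is purely notational: the indices $x\times T\in(X\times T)^{(p)}$ correspond to $x\in X^{(p)}$ exactly when $X\times T\to X$ is a homeomorphism, which holds when $T$ is Artinian (the case $T=T_{j}$ relevant here); for general $T$ one simply reads the diagram over the codimension-$p$ points of $X\times T$.
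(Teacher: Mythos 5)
Your proposal is correct and follows the same route as the paper: the paper derives this theorem from Corollary 3.12 (the functors $K^{q}(-\times T)$ and $HN^{q}(-\times T)$ inherit \'etale excision and the projective bundle formula, hence are effaceable over smooth $X$) and then repeats the argument of Theorem 3.14 with the Chern character precomposed with $(-)\times T$. Your additional remarks on the colimit interpretation of the supports and on the correspondence of codimension-$p$ points are careful refinements of points the paper treats only implicitly, but they do not change the argument.
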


For our purpose, we would like to take $T$ to be the dual number, ie, $spec k[\varepsilon]$.
\begin{corollary}
There exists the following commutative diagram between the exact sheafified Gersten sequences(for any integer $m$, both sequences are flasque resolutions):

\[
  \begin{CD}
      0 @. 0\\
      @VVV @VVV\\
      HN_{m}(O_{X[\varepsilon]}) @<Chern<< K_{m}(O_{X[\varepsilon]}) \\
      @VVV @VVV\\
      HN_{m}(k(X)[\varepsilon]) @<Chern<< K_{m}(k(X)[\varepsilon]) \\
      @VVV @VVV\\
      \oplus_{d[\varepsilon] \in X[\varepsilon]^{(1)}}\underline{HN}_{m-1}(O_{X,d}[\varepsilon] \ on \ d[\varepsilon]) @<Chern<< \oplus_{d[\varepsilon] \in X[\varepsilon]^{(1)}}\underline{K}_{m-1}(O_{X,d} [\varepsilon]\ on \ d[\varepsilon]) \\
      @VVV @VVV\\
     \oplus_{y[\varepsilon] \in X[\varepsilon]^{(2)}}\underline{HN}_{m-2}(O_{X,y}[\varepsilon] \ on \ y[\varepsilon]) @<Chern<< \oplus_{y[\varepsilon] \in X[\varepsilon]^{(2)}}\underline{K}_{m-2}(O_{X,y}[\varepsilon] \ on \ y[\varepsilon])  \\
      @VVV @VVV\\
      \dots @<Chern<< \dots \\ 
      @VVV @VVV\\
     \oplus_{x[\varepsilon] \in X[\varepsilon]^{(n)}}\underline{HN}_{m-n}(O_{X,x}[\varepsilon] \ on \ x[\varepsilon]) @<Chern<< \oplus_{x[\varepsilon] \in X[\varepsilon]^{(n)}}\underline{K}_{m-n}(O_{X,x}[\varepsilon] \ on \ x[\varepsilon])  \\
      @VVV @VVV\\
      0 @. 0.
  \end{CD}
\]

\end{corollary}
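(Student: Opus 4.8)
The plan is to derive this corollary as the special case $T = \operatorname{Spec} k[\varepsilon]$ of the preceding theorem (the commutative ladder of sheafified Gersten sequences attached to the functors $X \mapsto \mathcal{K}(X \times T)$ and $X \mapsto \mathcal{HN}(X \times T)$), and then to rewrite each entry in the $[\varepsilon]$-notation of the statement. First I would record the elementary geometry of $X[\varepsilon] := X \times \operatorname{Spec} k[\varepsilon]$: since $\operatorname{Spec} k[\varepsilon]$ is a one-point scheme (its maximal ideal $(\varepsilon)$ being nilpotent) and the projection $X[\varepsilon] \to X$ is finite, $X[\varepsilon]$ has the same underlying space as $X$, $\dim X[\varepsilon] = \dim X = n$, and there is a canonical bijection $X[\varepsilon]^{(p)} \cong X^{(p)}$ under which a codimension-$p$ point is written $x[\varepsilon] = x \times \operatorname{Spec} k[\varepsilon]$. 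Under this bijection the local rings are $\mathcal{O}_{X[\varepsilon], x[\varepsilon]} = \mathcal{O}_{X,x} \otimes_k k[\varepsilon] = \mathcal{O}_{X,x}[\varepsilon]$, and the pair $(\operatorname{Spec}\mathcal{O}_{X,x} \times T, \, x \times T)$ occurring in the coniveau construction for $\mathcal{A}_T$ (with $\mathcal{A} = \mathcal{K}$ or $\mathcal{HN}$) is precisely $(\operatorname{Spec}\mathcal{O}_{X,x}[\varepsilon], \, x[\varepsilon])$. Hence, term by term, the $\mathcal{K}$-column of the preceding theorem becomes the $K$-column displayed here and the $\mathcal{HN}$-column becomes the $HN$-column.

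Next I would address the assertion that both columns are \emph{exact} flasque resolutions terminating at codimension $n$. Flasqueness is automatic: each term $\bigoplus_{x[\varepsilon] \in X[\varepsilon]^{(p)}} \underline{\mathcal{A}}^{q}(\mathcal{O}_{X,x}[\varepsilon] \text{ on } x[\varepsilon])$ is a direct sum $\bigoplus_x j_{x\ast}(-)$ of pushforwards along immersions of points. Exactness is already contained in the preceding theorem, whose proof rests on the universal-exactness corollary (Corollary 3.12): since $\mathcal{K}$ and $\mathcal{HN}$ satisfy \'etale excision and the projective bundle formula, so do $\mathcal{K}_T$ and $\mathcal{HN}_T$ for every $k$-scheme $T$; and since $X/k$ is smooth --- it is $X$, not $X \times T$, that must be smooth --- the effacement theorem applies to $\mathcal{K}_T^{q}$ and $\mathcal{HN}_T^{q}$ at every point, so the sheafified Gersten sequences are exact (Corollary 3.9 together with Theorem 3.10). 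The one genuinely new input over the preceding theorem is the observation $\dim X[\varepsilon] = n$, which makes the coniveau filtration stop at codimension $n$ and hence forces both Gersten complexes to terminate with a zero exactly as displayed; the augmented sequence of $E_1$-terms then exhibits them as flasque resolutions of $K_m(\mathcal{O}_{X[\varepsilon]})$ and $HN_m(\mathcal{O}_{X[\varepsilon]})$ respectively.

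The commutativity of each square I would obtain from naturality alone: the horizontal arrows are the maps induced on coniveau spectral sequences --- hence on the resulting Cousin/Gersten complexes of sheaves --- by the spectrum-level Chern character $Ch : \mathcal{K} \to \mathcal{HN}$, which is a natural transformation; it therefore induces a morphism of exact couples and so a morphism of the two flasque resolutions. Assembling the three points yields the diagram.

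I do not expect a serious obstacle, since the corollary is essentially a specialization together with a change of notation; the only point requiring genuine care is that each term $\underline{HN}_{m-p}(\mathcal{O}_{X,x}[\varepsilon] \text{ on } x[\varepsilon])$ must be read as the filtered colimit $\varinjlim_{x \in U} HN_{m-p}(U \times T \text{ on } \overline{\{x\}} \cap U \times T)$ supplied by the coniveau construction, and \emph{not} as negative cyclic homology of a single residue-type ring, because $\mathcal{HN}$ does not commute with filtered colimits --- precisely the warning in the remark following Theorem 3.14. Once this convention is fixed, and once one has noted that the effacement machinery needs only the smoothness of $X$ (so that the non-reducedness of $k[\varepsilon]$ is harmless), the corollary follows formally from the preceding theorem together with Corollary 3.12.
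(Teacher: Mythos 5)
Your proposal is correct and follows exactly the route the paper takes: the corollary is obtained by specializing the preceding theorem (the $X\times T$ ladder) to $T=\operatorname{Spec} k[\varepsilon]$, which the paper does in a single sentence without further argument. Your additional care about the homeomorphism $X[\varepsilon]\to X$, the termination at codimension $n$, and the colimit convention for $\underline{HN}$ (echoing the paper's own remark after Theorem 3.14) only makes explicit what the paper leaves implicit.
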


\section{Goodwillie and Cathelieau isomorphism}
\label{Goodwillie and Cathelieau isomorphism}
In this section, we discuss lambda and Adams operations for (negative) cyclic homology and K-theory. In 4.1, we do explicit computation on Adams' eigen-spaces of negative cyclic homology. In section 4.2, we recall lambda
and Adams operations on K-theory briefly and then extend Adams operations to negative K-groups by using a method of Weibel.
In section 4.3, we prove Goodwillie-type and Cathelineau-type results by using a recent result
of Corti$\tilde{n}$as-Haesemeyer-Weibel[6].

\subsection{Adams operations on negative cyclic homology}
\label{Adams Operations on negative cyclic homology}
In this subsection, all the variant of cyclic homology are taken over $\mathbb{Q}$. Let $A$ be any commutative $\mathbb{Q}$-algebra and $I$ be an ideal of $A$.
We can associate a  Hochschild complexes $C^{h}_{\ast}(A)$ to $A$ as in [21]. The action of the symmetric groups on $C^{h}_{\ast}(A)$
gives the lambda operation
\[
 HH_{n}(A)=HH_{n}^{(1)}(A) \oplus \dots \oplus HH_{n}^{(n)}(A),
\]
and similarly
\[
 HC_{n}(A)=HC_{n}^{(1)}(A) \oplus \dots \oplus HC_{n}^{(n)}(A),
\]
\[
 HN_{n}(A)=HN_{n}^{(1)}(A) \oplus \dots \oplus HN_{n}^{(n)}(A).
\]

 There is also a Hochschild complexes  
$C^{h}_{\ast}(A/I)$ associated to  $A/I$. We use $C^{h}_{\ast}(A,I)$ to denote the kernel of the natural map
\[
 C^{h}_{\ast}(A) \rightarrow C^{h}_{\ast}(A/I).
\]
Then the relative Hochschild module $HH_{\ast}(A,I)$ is the homology of the complex $C^{h}_{\ast}(A,I)$. Moreover, the action of the symmetric groups on $C^{h}_{\ast}(A,I)$
gives the lambda operation
\[
 HH_{n}(A,I)=HH_{n}^{(1)}(A,I) \oplus \dots \oplus HH_{n}^{(n)}(A,I)
\]
and similarly
\[
 HC_{n}(A,I)=HC_{n}^{(1)}(A,I) \oplus \dots \oplus HC_{n}^{(n)}(A,I),
\]
\[
 HN_{n}(A,I)=HN_{n}^{(1)}(A,I) \oplus \dots \oplus HN_{n}^{(n)}(A,I).
\]

Let's assume $R$ is a regular noetherian ring and also a commutative $\mathbb{Q}$-algebra from now on, and $\varepsilon$ is the dual number. We consider $R[\varepsilon]= R \oplus \varepsilon R $ as a graded $\mathbb{Q}$-algebra.
The following SBI sequence is obtained from the corresponding eigen-piece of the relative Hochschild complex:
\[
 \rightarrow HC^{(i)}_{n+1}(R[\varepsilon],\varepsilon) \xrightarrow{S} HC^{(i-1)}_{n-1}(R[\varepsilon],\varepsilon) \xrightarrow{B} HH^{(i)}_{n}(R[\varepsilon],\varepsilon) \xrightarrow{I} HC^{(i)}_{n}(R[\varepsilon],\varepsilon) \rightarrow
\]

According to a result of Geller-Weibel [10], the above S map is $0$ on $HC(R[\varepsilon],\varepsilon)$. This enable us to break the SBI sequence up into 
short exact sequence:
\[
 0 \rightarrow HC^{(i-1)}_{n-1}(R[\varepsilon],\varepsilon) \xrightarrow{B} HH^{(i)}_{n}(R[\varepsilon],\varepsilon) \xrightarrow{I} HC^{(i)}_{n}(R[\varepsilon],\varepsilon) \rightarrow 0.
\]
In the following, we will use this short exact sequence to compute $HC^{(i)}_{n}(R[\varepsilon],\varepsilon)$.
\begin{theorem}
\begin{equation}
\begin{cases}
 \begin{CD}
 HC_{n}^{(i)}(R[\varepsilon],\varepsilon)= \Omega^{{2i-n}}_{R/ \mathbb{Q}}, for \  [\frac{n}{2}] \leq i \leq n.\\
 HC_{n}^{(i)}(R[\varepsilon],\varepsilon)= 0, else.
 \end{CD}
\end{cases}
\end{equation} 

\end{theorem}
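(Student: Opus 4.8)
The plan is to run an induction on $n$ using the short exact sequence
\[
0 \rightarrow HC^{(i-1)}_{n-1}(R[\varepsilon],\varepsilon) \xrightarrow{B} HH^{(i)}_{n}(R[\varepsilon],\varepsilon) \xrightarrow{I} HC^{(i)}_{n}(R[\varepsilon],\varepsilon) \rightarrow 0
\]
established above, which is available eigen-piece by eigen-piece because Geller--Weibel's vanishing $S=0$ splits each weight component of the $SBI$-sequence. The base case $n=0$ is immediate: $HC_0(R[\varepsilon],\varepsilon)=HH_0(R[\varepsilon],\varepsilon)=\varepsilon R\cong R=\Omega^{0}_{R/\mathbb{Q}}$, concentrated in weight $0$, which is exactly the asserted value ($2i-n=0$ for $i=n=0$). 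For the inductive step the sequence exhibits $HC^{(i)}_n(R[\varepsilon],\varepsilon)$ as $\operatorname{coker}\bigl(B\colon HC^{(i-1)}_{n-1}(R[\varepsilon],\varepsilon)\to HH^{(i)}_{n}(R[\varepsilon],\varepsilon)\bigr)$, so everything reduces to (a) computing the relative Hochschild homology $HH^{(i)}_{n}(R[\varepsilon],\varepsilon)$ with its $\lambda$-decomposition, and (b) identifying the image of $B$.

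For (a): since $R$ is a regular noetherian $\mathbb{Q}$-algebra it is geometrically regular over $\mathbb{Q}$, hence a filtered colimit of smooth $\mathbb{Q}$-algebras (N\'eron--Popescu), and as Hochschild homology commutes with filtered colimits the Hochschild--Kostant--Rosenberg isomorphism $HH_p(R)\cong\Omega^{p}_{R/\mathbb{Q}}$ holds, with $HH_p(R)$ pure of $\lambda$-weight $p$. Writing $R[\varepsilon]=R\otimes_{\mathbb{Q}}\mathbb{Q}[\varepsilon]$ and using the $\lambda$-compatible K\"unneth formula for Hochschild homology over the field $\mathbb{Q}$, one is reduced to the case $R=\mathbb{Q}$, where $HH_*(\mathbb{Q}[\varepsilon])$ and its Hodge pieces are read off from the $2$-periodic free resolution $\cdots\xrightarrow{x_1+x_2}\mathbb{Q}[\varepsilon]^{e}\xrightarrow{x_1-x_2}\mathbb{Q}[\varepsilon]^{e}\xrightarrow{x_1+x_2}\mathbb{Q}[\varepsilon]^{e}\xrightarrow{x_1-x_2}\mathbb{Q}[\varepsilon]^{e}$ of $\mathbb{Q}[\varepsilon]=\mathbb{Q}[x]/(x^2)$ over its enveloping algebra (Loday's computation for the dual numbers). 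Passing to the relative theory and bookkeeping the weights gives
\[
HH^{(i)}_{n}(R[\varepsilon],\varepsilon)\;\cong\;\Omega^{\,2i-n}_{R/\mathbb{Q}}\oplus\Omega^{\,2i-n-1}_{R/\mathbb{Q}}\qquad(1\le i\le n),
\]
with $HH^{(i)}_{n}(R[\varepsilon],\varepsilon)=0$ outside that range by the general structure of the $\lambda$-decomposition (here $\Omega^{<0}_{R/\mathbb{Q}}:=0$).

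For (b), which closes the induction: by the inductive hypothesis $HC^{(i-1)}_{n-1}(R[\varepsilon],\varepsilon)\cong\Omega^{\,2i-n-1}_{R/\mathbb{Q}}$, and $B$ embeds it into $HH^{(i)}_{n}(R[\varepsilon],\varepsilon)\cong\Omega^{\,2i-n}_{R/\mathbb{Q}}\oplus\Omega^{\,2i-n-1}_{R/\mathbb{Q}}$; one checks, from the chain-level formula for Connes' operator $B$ on the $\lambda$-eigenpieces of the relative complex (equivalently, from its compatibility with the Hodge filtration, $B$ acting essentially as exterior differentiation on HKR-type representatives), that $\operatorname{im}(B)$ is precisely the summand $\Omega^{\,2i-n-1}_{R/\mathbb{Q}}$, whence $HC^{(i)}_n(R[\varepsilon],\varepsilon)=\operatorname{coker}(B)\cong\Omega^{\,2i-n}_{R/\mathbb{Q}}$. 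This equals $\Omega^{n}_{R/\mathbb{Q}}$ at $i=n$, equals $\Omega^{0}_{R/\mathbb{Q}}=R$ at $i=[\tfrac{n}{2}]$ for $n$ even, and vanishes once $2i-n<0$; in particular for $n$ odd the extra index $i=[\tfrac{n}{2}]$ contributes $\Omega^{-1}_{R/\mathbb{Q}}=0$, reconciling the floor in the statement, and for $i>n$ the target of $B$ already vanishes, giving $HC^{(i)}_n(R[\varepsilon],\varepsilon)=0$ there. The genuine obstacle is step (b): pinning down $\operatorname{im}(B)$ as the \emph{lower} summand $\Omega^{2i-n-1}_{R/\mathbb{Q}}$ (and not a twisted copy of it) inside the two-term Hochschild module requires an honest chain-level computation of $B$ on the relative $\lambda$-eigen-complex of $R[\varepsilon]$, or alternatively a separate computation of the \emph{total} group $HC_n(R[\varepsilon],\varepsilon)=\bigoplus_{j\ge 0}\Omega^{\,n-2j}_{R/\mathbb{Q}}$ together with a weight count forcing the splitting; the Hochschild input (a), though standard, also hides the routine but delicate check that the $\lambda$-weights are distributed as claimed.
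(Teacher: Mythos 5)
Your proposal is correct and follows exactly the route the paper indicates for this theorem (the paper's own proof is only the one-line sketch ``compute $HH^{(i)}_{n}(R[\varepsilon],\varepsilon)$ and then use induction'' via the $S=0$ splitting of the $SBI$-sequence, with details deferred to the thesis). Your fleshed-out version — HKR plus K\"unneth for the Hochschild input, then the inductive cokernel computation — is the intended argument, and you correctly flag the one point that genuinely needs checking, namely that $\operatorname{im}(B)$ is the $\Omega^{2i-n-1}_{R/\mathbb{Q}}$ summand.
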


\begin{proof}
 Easy exercise which can be done  by computing $HH^{(i)}_{n}(R[\varepsilon],\varepsilon)$ and then use induction. More details can be found in[35]. 
\end{proof}
The above result tells us that
\begin{theorem}
\[
 HC_{n}(R[\varepsilon],\varepsilon) = \Omega^{{n}}_{R/ \mathbb{Q}}\oplus \Omega^{{n-2}}_{R/ \mathbb{Q}} \oplus \dots
\]
the last term is $\Omega^{{1}}_{R/ \mathbb{Q}}$ or $R$, depending on $n$ odd or even.
\end{theorem}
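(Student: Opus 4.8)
The plan is to obtain this as an immediate consequence of the preceding theorem by summing over the $\lambda$-eigenspaces. Recall from the beginning of this subsection that the action of the symmetric groups on $C^{h}_{\ast}(R[\varepsilon],\varepsilon)$ furnishes the decomposition $HC_{n}(R[\varepsilon],\varepsilon) = \bigoplus_{i} HC_{n}^{(i)}(R[\varepsilon],\varepsilon)$. So everything reduces to identifying which summands on the right are nonzero and reading off their de Rham degrees.

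First I would feed in the preceding theorem: $HC_{n}^{(i)}(R[\varepsilon],\varepsilon) = \Omega^{2i-n}_{R/\mathbb{Q}}$ for $[\frac{n}{2}]\leq i\leq n$, and $HC_{n}^{(i)}(R[\varepsilon],\varepsilon)=0$ otherwise. Letting $i$ run over the interval $[\frac{n}{2}]\leq i\leq n$, the exponent $2i-n$ runs through an arithmetic progression of common difference $2$ whose largest value is $2n-n=n$; hence $HC_{n}(R[\varepsilon],\varepsilon)=\bigoplus_{[\frac{n}{2}]\leq i\leq n}\Omega^{2i-n}_{R/\mathbb{Q}}$, and it remains only to describe the bottom of this progression.

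This last step is a parity check, and it is exactly where the two cases in the statement come from. If $n=2m$, then $i$ ranges from $m$ to $2m$, so $2i-n$ takes the values $0,2,\dots,n$; since $\Omega^{0}_{R/\mathbb{Q}}=R$, the sum ends in $R$. If $n=2m+1$, then $i$ ranges from $m$ to $2m+1$; the value $i=m$ produces exponent $-1$, where $\Omega^{-1}_{R/\mathbb{Q}}=0$ contributes nothing, while $i=m+1,\dots,2m+1$ give the exponents $1,3,\dots,n$, so the sum ends in $\Omega^{1}_{R/\mathbb{Q}}$. Reassembling the two cases yields precisely $HC_{n}(R[\varepsilon],\varepsilon)=\Omega^{n}_{R/\mathbb{Q}}\oplus\Omega^{n-2}_{R/\mathbb{Q}}\oplus\cdots$ with the asserted last term. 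I do not expect any genuine obstacle here: the whole argument is the indexing bookkeeping above layered on top of the already-established computation of the eigen-pieces, the only point needing a moment's care being the harmless vanishing contribution from $i=[\frac{n}{2}]$ when $n$ is odd.
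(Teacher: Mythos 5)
Your proposal is correct and matches the paper's intent exactly: the paper presents this theorem as an immediate consequence of the preceding eigen-space computation ("The above result tells us that"), which is precisely the summation over the $\lambda$-decomposition $HC_{n}(R[\varepsilon],\varepsilon)=\bigoplus_{i}HC_{n}^{(i)}(R[\varepsilon],\varepsilon)$ that you carry out. Your parity bookkeeping, including the vanishing $\Omega^{-1}_{R/\mathbb{Q}}$ contribution at $i=[\frac{n}{2}]$ for odd $n$, is the right (and only) point requiring care.
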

The following corollaries are obvious from the fact that for any commutative $k$-algebra  $A$ , where $k$ is a field of characteristic $0$, and $I$ be an ideal of $A$,
\[
 HN_{n}(A,I)=HC_{n-1}(A,I).
\]
\[
 HN_{n}^{(i)}(A,I)=HC_{n-1}^{(i-1)}(A,I).
\]

\begin{corollary}
\begin{equation}
\begin{cases}
 \begin{CD}
 HN_{n}^{(i)}(R[\varepsilon],\varepsilon)= \Omega^{{2i-n-1}}_{R/ \mathbb{Q}}, for \ [\frac{n}{2}] < i \leq n.\\
 HN_{n}^{(i)}(R[\varepsilon],\varepsilon)= 0, else.
 \end{CD}
\end{cases}
\end{equation} 
 
\end{corollary}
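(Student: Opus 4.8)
The plan is to derive the eigen-space decomposition of $HN_n(R[\varepsilon],\varepsilon)$ from the already-established computation of $HC_n^{(i)}(R[\varepsilon],\varepsilon)$ in Theorem~4.1 by invoking the dimension-shifting identity $HN_n^{(i)}(A,I) = HC_{n-1}^{(i-1)}(A,I)$, which holds for any commutative $k$-algebra $A$ with $k$ a field of characteristic $0$ and any ideal $I$. This identity itself follows from the long exact sequence relating negative cyclic, periodic cyclic, and cyclic homology: in the relative situation $HP_*(A,I)$ vanishes (relative periodic cyclic homology of a nilpotent extension over $\mathbb{Q}$ is zero, by Goodwillie's theorem), so the connecting map in the $HN$–$HP$–$HC$ sequence gives $HN_n(A,I)\xrightarrow{\sim} HC_{n-1}(A,I)$, and this is compatible with the $\lambda$-decomposition because all the maps in the sequence are induced by maps of complexes carrying the symmetric group action, so they respect the Adams eigen-pieces, shifting the weight by one along with the degree.

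Granting that, the computation is purely bookkeeping. First I would substitute: $HN_n^{(i)}(R[\varepsilon],\varepsilon) = HC_{n-1}^{(i-1)}(R[\varepsilon],\varepsilon)$. Then I apply Theorem~4.1 with $n$ replaced by $n-1$ and $i$ replaced by $i-1$: the nonzero case is $[\tfrac{n-1}{2}] \leq i-1 \leq n-1$, in which case $HC_{n-1}^{(i-1)}(R[\varepsilon],\varepsilon) = \Omega^{2(i-1)-(n-1)}_{R/\mathbb{Q}} = \Omega^{2i-n-1}_{R/\mathbb{Q}}$, matching the claimed exponent. Finally I would check that the index range $[\tfrac{n-1}{2}] \leq i-1 \leq n-1$ is equivalent to $[\tfrac{n}{2}] < i \leq n$ as stated: the upper bound $i-1\le n-1$ is plainly $i\le n$; for the lower bound one verifies, splitting into $n$ even and $n$ odd, that $i-1 \geq [\tfrac{n-1}{2}]$ holds exactly when $i > [\tfrac{n}{2}]$ (for $n=2m$ even, $[\tfrac{n-1}{2}]=m-1$ forces $i\ge m$, i.e. $i > m-1 = [\tfrac n2]$ is automatic but one should confirm the strict inequality is the right normalization; for $n=2m+1$ odd, $[\tfrac{n-1}{2}]=m$ forces $i\ge m+1$, i.e. $i>m=[\tfrac n2]$). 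The outside-the-range case gives $0$ directly.

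The only real content here is the reduction $HN_n^{(i)}(A,I)=HC_{n-1}^{(i-1)}(A,I)$, and since the corollary is stated as ``obvious'' from that identity, the proof is essentially a one-line substitution plus an elementary reconciliation of the two ways of writing the index bounds. The main obstacle, such as it is, is making sure the floor-function inequalities line up in both parities of $n$ and that the strict versus non-strict inequality in the weight bound is recorded correctly after the shift; I would handle this by treating the two parity cases separately rather than trying to manipulate $[\tfrac{n-1}{2}]$ symbolically.
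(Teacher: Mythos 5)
Your approach is exactly the paper's: the corollary is deduced from Theorem 4.1 by the shift $HN_{n}^{(i)}(A,I)=HC_{n-1}^{(i-1)}(A,I)$, which the paper simply asserts and you correctly justify via the vanishing of relative periodic cyclic homology for a nilpotent ideal in characteristic $0$, the shift of weight by one being forced by the $B$-map in the $HN$--$HP$--$HC$ sequence. One correction to your even-case bookkeeping: for $n=2m$ one has $[\tfrac{n}{2}]=m$, not $m-1$, so the substituted range $i\geq m$ really is one index larger than the stated range $i>m$; the discrepancy is harmless because at $i=m$ the formula gives $\Omega^{2i-n-1}=\Omega^{-1}=0$, so that index correctly lands in the ``else $0$'' clause, just as the lower endpoint of Theorem 4.1's range already contributes $0$ when $n$ is odd.
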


\begin{corollary}
\[
 HN_{n}(R[\varepsilon],\varepsilon) = \Omega^{{n-1}}_{R/ \mathbb{Q}}\oplus \Omega^{{n-3}}_{R/ \mathbb{Q}} \oplus \dots
\]
the last term is $\Omega^{{1}}_{R/ \mathbb{Q}}$ or $R$, depending on $n$ odd or even.
\end{corollary}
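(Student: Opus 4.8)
The plan is to read off the statement directly from the two computations already in hand: the eigen-piece formula for $HN_n^{(i)}(R[\varepsilon],\varepsilon)$ in the preceding corollary, and --- even more directly --- the computation of $HC_n(R[\varepsilon],\varepsilon)$ combined with the identity $HN_n(A,I) = HC_{n-1}(A,I)$. For the first route I would recall that, because $R[\varepsilon]$ is a graded $\mathbb{Q}$-algebra, the relative negative cyclic group splits as the finite direct sum $HN_n(R[\varepsilon],\varepsilon) = \bigoplus_{i} HN_n^{(i)}(R[\varepsilon],\varepsilon)$ of Adams eigen-components, as noted at the beginning of this subsection. Substituting $HN_n^{(i)}(R[\varepsilon],\varepsilon) = \Omega^{2i-n-1}_{R/\mathbb{Q}}$ for $[\frac{n}{2}] < i \le n$ and $0$ otherwise, and letting $i$ descend from $n$ to $[\frac{n}{2}]+1$, the twist $2i-n-1$ runs through $n-1, n-3, n-5, \dots$, bottoming out at $1$ when $n$ is even and at $0$ when $n$ is odd (with $\Omega^0_{R/\mathbb{Q}} = R$). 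Collecting the nonzero summands then yields exactly $\Omega^{n-1}_{R/\mathbb{Q}} \oplus \Omega^{n-3}_{R/\mathbb{Q}} \oplus \cdots$ with the asserted last term.

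Alternatively, and more cheaply, I would bypass the eigenspaces entirely: the identity $HN_n(A,I) = HC_{n-1}(A,I)$, valid for any commutative $k$-algebra $A$ with $k$ of characteristic $0$ and any ideal $I$, reduces the claim to the already-proved formula $HC_{n-1}(R[\varepsilon],\varepsilon) = \Omega^{n-1}_{R/\mathbb{Q}} \oplus \Omega^{n-3}_{R/\mathbb{Q}} \oplus \cdots$, whose last term is $\Omega^1_{R/\mathbb{Q}}$ when $n-1$ is odd and $R$ when $n-1$ is even --- that is, $\Omega^1_{R/\mathbb{Q}}$ for $n$ even and $R$ for $n$ odd.

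I do not expect any genuine obstacle: the content is entirely in the Hochschild computation of this subsection and the vanishing of the $S$-map due to Geller--Weibel, both of which may be assumed. The only thing that needs care is the bookkeeping with the floor function $[\frac{n}{2}]$ --- checking that the surviving range of eigen-indices produces precisely the stated list of differential forms --- and keeping straight which parity case attaches to $\Omega^1_{R/\mathbb{Q}}$ and which to $R$.
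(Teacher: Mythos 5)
Your second route is exactly the paper's proof: the corollary is deduced immediately from the identity $HN_{n}(A,I)=HC_{n-1}(A,I)$ together with the preceding computation of $HC_{n-1}(R[\varepsilon],\varepsilon)$, and your first (eigenspace-summation) route is just an equivalent repackaging via Corollary 4.3. Your parity bookkeeping ($\Omega^{1}_{R/\mathbb{Q}}$ for $n$ even, $R$ for $n$ odd) is the correct one; the phrase ``depending on $n$ odd or even'' in the statement appears to carry over the parity convention of the $HC$ theorem without accounting for the shift $n\mapsto n-1$.
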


We can also generalize the above results to the sheaf level.
\begin{theorem}
Let $X$ be a smooth scheme over a field $k$, $chark=0$. we have the following

\begin{equation}
\begin{cases}
 \begin{CD}
 HN_{n}^{(i)}(O_{X}[\varepsilon],\varepsilon)= \Omega^{{2i-n-1}}_{O_{X}/ \mathbb{Q}}, for \  [\frac{n}{2}] < i \leq n.\\
 HN_{n}^{(i)}(O_{X}[\varepsilon],\varepsilon)= 0, else.
 \end{CD}
\end{cases}
\end{equation} 

It follows that 
\[
 HN_{n}(O_{X}[\varepsilon],\varepsilon) = \Omega^{{n-1}}_{O_{X}/ \mathbb{Q}}\oplus \Omega^{{n-3}}_{O_{X}/ \mathbb{Q}} \oplus \dots
\]
the last term is $\Omega^{{1}}_{O_{X}/ \mathbb{Q}}$ or $O_{X}$, depending on $n$ odd or even.
\end{theorem}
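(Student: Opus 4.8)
The plan is to deduce the sheafified statement from the affine computations just established by a localization-and-gluing argument; the only genuine work is to check that every isomorphism in sight is natural enough to patch over a Zariski cover. First I would fix a basis for the Zariski topology of $X$ consisting of affine opens $\operatorname{Spec}(R)$ with $R$ a smooth $k$-algebra. Since $\operatorname{char}k = 0$, such an $R$ is regular noetherian and a commutative $\mathbb{Q}$-algebra, so the hypotheses of Section~4.1 apply to it verbatim. By definition $HN_n^{(i)}(O_X[\varepsilon],\varepsilon)$ is the Zariski sheaf associated to the presheaf $U \mapsto HN_n^{(i)}(O_X(U)[\varepsilon],\varepsilon)$, so it suffices to evaluate this presheaf on the basis and to identify the restriction maps with the canonical localization maps on K\"ahler differentials.

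The point that makes the gluing work is that the \emph{relative} theory is compatible with localization, even though negative cyclic homology in isolation is not (it involves a product totalization, hence a limit, which need not commute with filtered colimits). Here one uses the identifications $HN_n(A,I) = HC_{n-1}(A,I)$ and $HN_n^{(i)}(A,I) = HC_{n-1}^{(i-1)}(A,I)$ recalled above, whose validity for the square-zero ideal $I=(\varepsilon)$ rests on Goodwillie's theorem that $HP_\ast$ of a nilpotent ideal vanishes in characteristic $0$. This reduces everything to \emph{relative cyclic homology}, and $HH_\ast$, $HC_\ast$ together with their $\lambda$-decompositions do commute with localization: using $R[\varepsilon] = R \otimes_{\mathbb{Q}} \mathbb{Q}[\varepsilon]$ and the K\"unneth formula one gets $HH_\ast^{(i)}(R[\varepsilon],\varepsilon) = S^{-1}HH_\ast^{(i)}(A[\varepsilon],\varepsilon)$ for $R = S^{-1}A$, compatibly with the symmetric-group action, and likewise $HC_\ast^{(i)}(R[\varepsilon],\varepsilon) = S^{-1}HC_\ast^{(i)}(A[\varepsilon],\varepsilon)$ since $HC$ commutes with filtered colimits. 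Consequently the presheaf $U \mapsto HN_n^{(i)}(O_X(U)[\varepsilon],\varepsilon)$ is already a sheaf on the basis of affine opens, so no further sheafification is needed there.

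Next I would feed in the affine answer: over each basic affine $\operatorname{Spec}(R)$ the preceding corollary gives $HN_n^{(i)}(R[\varepsilon],\varepsilon) \cong \Omega^{2i-n-1}_{R/\mathbb{Q}}$ for $[\tfrac{n}{2}] < i \le n$ and $0$ otherwise, the isomorphism being assembled from the Goodwillie identification $HN_n^{(i)}(R[\varepsilon],\varepsilon) = HC_{n-1}^{(i-1)}(R[\varepsilon],\varepsilon)$, the natural short exact sequence $0 \to HC_{n-1}^{(i-1)}(R[\varepsilon],\varepsilon) \xrightarrow{B} HH_n^{(i)}(R[\varepsilon],\varepsilon) \xrightarrow{I} HC_n^{(i)}(R[\varepsilon],\varepsilon) \to 0$, and the Hochschild--Kostant--Rosenberg-type description of $HH_n^{(i)}(R[\varepsilon],\varepsilon)$ in terms of $\Omega^\bullet_{R/\mathbb{Q}}$, all of which are functorial in $R$. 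Since $\Omega^p_{-/\mathbb{Q}}$ commutes with localization, i.e. $\Omega^p_{S^{-1}A/\mathbb{Q}} = S^{-1}\Omega^p_{A/\mathbb{Q}}$, the sheaf attached to $U \mapsto \Omega^p_{O_X(U)/\mathbb{Q}}$ is exactly the quasi-coherent $O_X$-module $\Omega^p_{O_X/\mathbb{Q}}$ occurring in the statement. Patching the affine isomorphisms along the (automatically compatible, because canonical) restriction maps gives the asserted isomorphism of Zariski sheaves $HN_n^{(i)}(O_X[\varepsilon],\varepsilon) \cong \Omega^{2i-n-1}_{O_X/\mathbb{Q}}$ in the stated range, and vanishing outside it; summing over $i$ and reading off the parity of $n$ yields the displayed formula for $HN_n(O_X[\varepsilon],\varepsilon)$, exactly as in the affine case.

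The step I expect to cost the most care is the naturality bookkeeping of the middle paragraph: one must check that the passage from $HN$ to $HC$ via Goodwillie, the $\lambda$-splitting, the boundary map $B$, and the HKR isomorphism are \emph{all} strictly compatible with ring localization, since, as noted, $HN$ on its own does not commute with filtered colimits, so this reduction is substantive rather than cosmetic. Once that compatibility is in hand, the geometric gluing is formal, because the comparison maps are canonical and the sheaf axiom is local on $X$.
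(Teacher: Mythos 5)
Your argument is correct and is essentially the paper's (implicit) route: the paper simply asserts the sheaf-level statement as a generalization of the affine computation in Theorem 4.1--Corollary 4.4, and your localization-and-gluing argument is exactly what is needed to justify that passage. In particular, your observation that one must work with the \emph{relative} groups via $HN_{n}(A,I)=HC_{n-1}(A,I)$ because $HN$ alone does not commute with filtered colimits is precisely the subtlety the paper itself flags in the remark following Theorem 3.14 and in the proof of Lemma 5.3.
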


\subsection{Adams operations on K-groups}
\label{Adams operations on K-groups}

In this section, we recall lambda and Adams operations on K-groups briefly. We assume $X$ to be a noetherian scheme with finite Krull dimension. It is well known that the Grothendieck group of $X$ has a $\lambda$-ring structure given by exterior power,namely,$\lambda^{k}(E) = \Lambda^{k}E$ for any given vector bundle $E$ over $X$.
There are also several ways extending the Adams operations on Grothendieck groups to higher K-groups, [24] by Soul\'e, [11,12]by Gillet-Soul\'e and [13,14] by Grayson.

In [24], Soul\'e defines lambda operations on higher K-groups( with support ) and shows that there is a $\lambda$-ring structure for the higher K-groups. The key of his approach is to consider K-theory as a generalized cohomology theory:
\[
 K= \mathbb{Z} \times BGL^{+}.
\]

Hence, we have
\[
 K(X) = \mathbb{H}(X,\mathbb{Z} \times BGL^{+})
\]
and
\[
 K(X \ on \ Y) = \mathbb{H}_{Y}(X,\mathbb{Z} \times BGL^{+}).
\]
where $Y$ is closed in $X$ and and $K(X \ on \ Y)$, K-theory of $X$ with support in $Y$, is defined as the homotopy fibre of 
\[
 BQP(X) \rightarrow BQP(X \ - \ Y)
\]
here $P(X)$ is the category of locally free sheaves of finite rank on $X$ and $Q$ stands for Quillen's Q-construction.

Now, we let $R_{\mathbb{Z}}(GL_{N})$ be the Grothendieck group of representations of the general linear group scheme of $GL_{N}$. Then it is well known that $R_{\mathbb{Z}}(GL_{N})$
 has a $\lambda$-ring structure. And moreover, an element of $R_{\mathbb{Z}}(GL_{N})$ induces a morphism
\[
 \mathbb{Z} \times BGL_{N}^{+} \to \mathbb{Z} \times BGL^{+}.
\]

In other word, there is a morphism between abelian groups:
\[
 R_{\mathbb{Z}}(GL_{N}) \rightarrow [\mathbb{Z} \times BGL_{N}^{+}, \mathbb{Z} \times BGL^{+}].
\]
Passing to limit, we have
\[
 R_{\mathbb{Z}}(GL) \rightarrow [\mathbb{Z} \times BGL^{+}, \mathbb{Z} \times BGL^{+}].
\]
Furthermore, we have the following morphism by taking hypercohomology:
\[
 R_{\mathbb{Z}}(GL) \rightarrow \{\mathbb{H}_{Y}(X,\mathbb{Z} \times BGL^{+}), \mathbb{H}_{Y}(X,\mathbb{Z} \times BGL^{+})\}.
\]
And finally we arrive at group level:
\[
 R_{\mathbb{Z}}(GL) \rightarrow \{K_{m}(X \ on \ Y), K_{m}(X \ on \ Y)\}.
\]

In other word, the $\lambda$-operations on $K_{m}(X \ on \ Y)$ are induced from the $\lambda$-operations of $R_{\mathbb{Z}}(GL_{N})$. In fact, this is exact the point to prove 
$K_{m}(X \ on \ Y)$ carries a $\lambda$-ring structure.

Since the appearance of the non-zero negative K-groups in our study, we need to extend the above Adams operations  $\psi^{k}$ to negative range.
This can be done by descending induction, which was already pointed out by Weibel in[33]. 

For every integer $n \in \mathbb{Z}$,  we have the following Bass fundamental exact sequence.

\[
 ... \to K_{n}(X[t,t^{-1}] \ on \ Y[t,t^{-1}]) \to  K_{n-1}(X \ on \ Y)  \to 0.
\]

 In particular, for any $x \in K_{-1}(X \ on \ Y)$, we have $x\cdot t \in K_{0}(X[t,t^{-1}] \ on \ Y[t,t^{-1}])$, where 
$t \in K_{1}(k[t,t^{-1}])$. We have
\[
 \psi^{k}(x\cdot t ) = \psi^{k}(x)\psi^{k}(t)= \psi^{k}(x) k\cdot t.
\]

Tensoring with $\mathbb{Q}$, we have obtained Adams operations $\psi^{k}$ on $K_{-1}(X \ on \ Y)$:
\[
 \psi^{k}(x)= \dfrac{\psi^{k}(x\cdot t )}{k\cdot t}.
\]

Continuing this procedure, we obtain Adams operations on all the negative K-groups.

\subsection{Goodwillie-type and Cathelineau-type results}
\label{Goodwillie-type and Cathelineau-type results}
In this section, we will show Goodwillie-type and Cathelineau-type results for non-connective K-groups. All the variant of cyclic homology are taken over $\mathbb{Q}$. Let's recall that in [8] Goodwillie shows the relative 
Chern character is an isomorphism between the relative K-group $K_{n}(A,I)$ and negative cyclic homology $HN_{n}(A,I)$, where $A$ is a commutative $\mathbb{Q}$-algebra and $I$
is a nilpotent ideal in $A$. 
\begin{theorem}[8] 

Let $I$ be a nilpotent ideal in a commutative $\mathbb{Q}$-algebra $A$, the relative Chern character 
\[
  Ch: K_{n}(A,I) \to HN_{n}(A,I)
\]
is an isomorphism.
\end{theorem}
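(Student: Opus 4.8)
This is Goodwillie's theorem, and the plan is to reconstruct his argument in two movements: a dévissage reducing to square-zero ideals, followed by a rational-homotopy computation that identifies relative $K$-theory with relative cyclic homology. For the dévissage, choose $m$ with $I^{m}=0$ and induct on $m$. The ideal $J=I^{m-1}$ is square-zero (since $I^{2m-2}=0$ for $m\ge 2$), and $\bar A=A/J$ carries $\bar I=I/J$ with $\bar I^{\,m-1}=0$. Splicing the fiber sequences $K(A,I)\to K(A)\to K(A/I)$ one gets a fiber sequence $K(A,J)\to K(A,I)\to K(\bar A,\bar I)$, and likewise $HN(A,J)\to HN(A,I)\to HN(\bar A,\bar I)$, compatibly with the Chern character. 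The inductive hypothesis handles the $(\bar A,\bar I)$ term; if the theorem is known for square-zero ideals it handles the $(A,J)$ term; the five lemma then handles $(A,I)$. So it suffices to treat $I^{2}=0$.

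For the square-zero case, note first that $K(A,I)$ is connective: projectives, idempotents and units lift along a nilpotent ideal, so $K_{0}(A)\xrightarrow{\ \cong\ }K_{0}(A/I)$, and the same applies after $(-)[t,t^{\pm 1}]$, forcing $K_{n}(A,I)=0$ for $n\le 0$. Hence relative $K$-theory is modeled by the homotopy fiber $F$ of $BGL(A)^{+}\to BGL(A/I)^{+}$, an infinite loop space, so $H_{*}(F;\mathbb{Q})$ is a connected graded Hopf algebra and $\pi_{*}(F)\otimes\mathbb{Q}=\mathrm{Prim}\,H_{*}(F;\mathbb{Q})$; since the relative groups are already uniquely divisible this gives $K_{*}(A,I)=\mathrm{Prim}\,H_{*}(F;\mathbb{Q})$. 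Next identify $H_{*}(F;\mathbb{Q})$ with the relative group homology $H_{*}(GL(A),GL(A/I);\mathbb{Q})$, and use that $\ker(GL(A)\to GL(A/I))=1+M_{\infty}(I)$ is a uniquely divisible nilpotent group whose Malcev Lie algebra is $(M_{\infty}(I),[\,,\,])$; by the Lazard/Malcev correspondence over $\mathbb{Q}$ this converts the relative group homology into relative Lie algebra homology $H^{\mathrm{Lie}}_{*}(\mathfrak{gl}(A),\mathfrak{gl}(A/I);\mathbb{Q})$. The Loday--Quillen--Tsygan theorem computes $H^{\mathrm{Lie}}_{*}(\mathfrak{gl}(A);\mathbb{Q})$ as the free graded-commutative algebra on $HC_{*-1}(A)$; passing to the relative setting and taking primitives yields $HC_{n-1}(A,I)$. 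Finally, for $I$ nilpotent with $\mathbb{Q}\subseteq A$ one has $HN_{n}(A,I)\cong HC_{n-1}(A,I)$ --- exactly the relation recorded just before Corollary 4.6 --- and one checks that under all of these identifications the map induced by $Ch$ is the identity. This yields $K_{n}(A,I)\xrightarrow{\ \cong\ }HN_{n}(A,I)$. (One may streamline slightly by first reducing to a \emph{split} square-zero extension $A=R\ltimes M$, since the relative invariants depend only on the $A/I$-bimodule $I$; this is convenient but not essential.)

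The main obstacle is the middle step of the square-zero case: showing that the rational homology of the \emph{relative} plus construction is given by \emph{relative} Lie algebra homology, and extracting primitives correctly. Concretely one must (a) run the Hochschild--Serre spectral sequence of $1+M_{\infty}(I)\to GL(A)\to GL(A/I)$, control the $GL(A/I)$-action, and invoke the Lazard-type comparison in this relative (not absolute) nilpotent situation, and (b) invoke and adapt Loday--Quillen--Tsygan, including compatibility of its structure maps with the $S$, $B$, $I$ operations and with the Chern character. Everything else --- the dévissage, the vanishing of non-positive relative $K$-groups, and the $HN$--$HC$ degree shift --- is either formal or already available in the preceding sections.
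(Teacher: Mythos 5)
The paper does not actually prove this statement: it is imported verbatim from Goodwillie's 1986 Annals paper (its reference [8]), so there is no argument of the paper's own to compare yours against. What you have written is a reconstruction of the standard proof (Goodwillie's original argument, essentially as presented in Chapter 11 of Loday's \emph{Cyclic Homology}): d\'evissage along powers of $I$, vanishing of the non-positive relative groups, Milnor--Moore on the fiber of the plus constructions, Malcev theory for the uniquely divisible nilpotent group $1+M_{\infty}(I)$, Loday--Quillen--Tsygan, and finally the shift $HN_{n}(A,I)\cong HC_{n-1}(A,I)$. The skeleton is right, and the d\'evissage and connectivity steps are fine. Do note, however, that the $HN$--$HC$ shift you invoke at the end is not formal: it is equivalent to the vanishing of relative periodic cyclic homology for nilpotent ideals, which is itself a theorem of Goodwillie (the paper also records this identity without proof, and in fact states it for arbitrary ideals, where it fails).

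Two concrete problems remain. First, the step you yourself flag as the main obstacle is really the entire content of the theorem, and your outline does not supply the device that makes it work: the homology of the homotopy fiber $F$ of $BGL(A)^{+}\to BGL(A/I)^{+}$ is \emph{not} computed by a naive Hochschild--Serre argument on $1+M_{\infty}(I)\to GL(A)\to GL(A/I)$, because of the $\pi_{1}$-action and because plus construction does not commute with taking fibers; the standard repair is the relative Volodin construction $X(A,I)$, which gives a space with accessible homology that models relative $K$-theory and on which the Malcev/Lie-algebra comparison can be run. Without naming this (or an equivalent mechanism), the passage from $H_{*}(F;\mathbb{Q})$ to $H_{*}^{\mathrm{Lie}}(\mathfrak{gl}(A),\mathfrak{gl}(A/I);\mathbb{Q})$ is a genuine gap. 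Second, your parenthetical reduction to split square-zero extensions is false: the relative invariants of a square-zero extension are not determined by the $A/I$-bimodule $I$ alone. For example, $\mathbb{Q}[y]/(y^{3})\to\mathbb{Q}[y]/(y^{2})$ and the split extension $\mathbb{Q}[y,z]/(y^{2},yz,z^{2})\to\mathbb{Q}[y]/(y^{2})$ are square-zero extensions by the same bimodule $\mathbb{Q}$, yet a direct computation with $HC_{1}=\Omega^{1}/dA$ gives relative $HC_{1}$ equal to $0$ in the first case and $\mathbb{Q}$ in the second (hence, by the very theorem being proved, different relative $K_{2}$). Since you mark that reduction as inessential it does not sink the argument, but it should be deleted rather than left as a plausible-looking shortcut.
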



This result is further generalized by Cathelineau in [3]
\begin{theorem}[3]

 The Goodwillie's isomorphism
\[
  K_{n}(A,I) = HN_{n}(A,I)
\]
respects Adams' operations. That is,
\[
 K_{n}^{(i)}(A,I) = HN_{n}^{(i)}(A,I).
\]
\end{theorem}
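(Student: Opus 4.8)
The statement to be proved is Cathelineau's theorem: Goodwillie's relative Chern character isomorphism $K_n(A,I) \xrightarrow{\simeq} HN_n(A,I)$ for $I$ a nilpotent ideal in a commutative $\mathbb{Q}$-algebra $A$ is compatible with the $\lambda$-decomposition, i.e.\ it restricts to isomorphisms $K_n^{(i)}(A,I) \xrightarrow{\simeq} HN_n^{(i)}(A,I)$ on each Adams eigenspace. The plan is to upgrade the proof of Goodwillie's theorem so that it is visibly $\lambda$-equivariant at every stage, rather than trying to compare the abstract isomorphism with the decomposition after the fact.

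First I would reduce to the case where $I$ is a \emph{square-zero} ideal. A standard d\'evissage argument works: since $I$ is nilpotent one has a finite filtration $I \supset I^2 \supset \dots \supset I^r = 0$, and both the relative $K$-groups and the relative $HN$-groups fit into long exact sequences for the pairs $(A/I^{s+1}, I^s/I^{s+1})$; the connecting maps and the Chern characters are natural, and the Adams operations act compatibly on these sequences (functoriality of $\psi^k$ in the ring, which is part of the construction recalled in Section 4.2). So by the five lemma and induction on $r$ it suffices to treat a square-zero ideal, and in fact the most delicate instance is $A = R[\varepsilon]$, $I = (\varepsilon)$, which is where the explicit computations of Section 4.1 live. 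Next I would recall that Goodwillie's proof proceeds through Hochschild and cyclic homology: the Chern character factors through $K_n(A,I) \to HN_n(A,I) \to HC_{n-1}(A,I)$ using the SBI sequence, and Goodwillie first proves the analogue with $HN$ replaced by the \emph{relative negative cyclic} theory built from Hochschild chains, where the key input is that the relative Hochschild complex of a nilpotent extension is computed by a complex with an explicit simplicial/cyclic structure. The point is that \emph{all} of these complexes — the (relative) Hochschild complex, the cyclic bicomplex, the negative cyclic complex — carry the $\lambda$-operations coming from the symmetric group actions (this is exactly the $\lambda$-decomposition recalled at the start of Section 4.1), and the comparison maps Goodwillie uses (the SBI maps $S$, $B$, $I$, the Dennis trace, the cyclic-homology-to-de-Rham maps) are all built from natural operations on chains that commute with the symmetric group actions up to the standard shifts. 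So I would go through Goodwillie's argument and check $\lambda$-equivariance step by step.

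The one genuinely new ingredient is on the $K$-theory side: one must know that the Dennis trace $K_n(A,I) \to HH_n(A,I)$, and more importantly the comparison map from relative $K$-theory of a nilpotent extension to relative cyclic homology that underlies Goodwillie's theorem, is compatible with Adams operations. Here I would invoke the setup of Section 4.2: the $\lambda$-operations on $K_n(A,I)$ are induced from representations of $GL$, and the trace map to Hochschild/cyclic homology is induced by a map of (pre-)spectra or simplicial objects that is equivariant for these operations — this is the content that Cathelineau established and that Corti\~nas--Haesemeyer--Weibel [6] reprove in the form cited in the introduction. In practice I would simply cite [3] and [6] for this equivariance of the trace/Chern character at the level of the relevant functors, and then the rest of the argument is formal: restrict the isomorphism of Goodwillie's theorem to the $i$-th eigenspace on each side, using that the isomorphism commutes with the idempotents $e^{(i)} = \sum_k a_{k,i}\,\psi^k$ that cut out the eigenspaces.

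The main obstacle is precisely the $\lambda$-equivariance of the $K$-theoretic Chern character in the \emph{nonconnective} range that is needed for this paper's applications (negative $K$-groups with support appear in the main diagrams). Cathelineau's original statement is for connective $K$-theory; to get it for negative $K$-groups one would combine the connective statement with the Bass fundamental sequence and the extension of $\psi^k$ to negative degrees described in Section 4.2, noting that the Chern character also satisfies a projective-line / fundamental-theorem compatibility so that the descending induction $\psi^k(x) = \psi^k(x\cdot t)/(k\cdot t)$ is matched on both sides. I expect the verification that the relative Chern character commutes with this $t$-multiplication and with the splitting of the Bass sequence to be the technically fussiest point; everything else reduces to the naturality and $\lambda$-compatibility already recorded, together with the explicit eigenspace computation of Corollary 4.5.
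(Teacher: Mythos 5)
The statement you are proving is quoted by the paper as Cathelineau's theorem and carries the citation [3]; the paper offers no proof of it at all, it is simply imported from the literature (and its later upgrades to the spectrum level and to the ``with support'' setting are delegated to Corti\~nas--Haesemeyer--Weibel [6]). So the honest comparison is between a citation and your proof sketch. Your sketch is broadly faithful to how such a result is actually established: reduce by d\'evissage to a square-zero ideal, observe that the Hochschild, cyclic and negative cyclic complexes of the relevant pairs carry the symmetric-group ($\lambda$-)decomposition and that the $S$, $B$, $I$ maps and the trace are equivariant for it up to the standard degree shift, and then restrict Goodwillie's isomorphism to eigenspaces via the Eulerian idempotents. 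That is the right shape of argument.

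Two points keep this from being a proof. First, at the one genuinely nontrivial step --- the compatibility of the relative Chern character (equivalently, the Dennis trace on the relative Volodin-type model) with the Adams operations on relative $K$-theory --- you write that you would ``simply cite [3] and [6].'' Since [3] \emph{is} the theorem being proved, this is circular; an actual proof has to establish that equivariance, which is the entire content of Cathelineau's paper, and it is not a formal consequence of the constructions recalled in Section 4.2. Second, the difficulty you single out as ``the main obstacle'' --- extending the equivariance to the nonconnective range via the Bass fundamental sequence --- does not arise for this statement: for a nilpotent ideal $I$ in a ring $A$ one has $K_n(A,I)=0$ for $n\le 0$, so the theorem is vacuous in negative degrees. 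The negative $K$-groups that appear in the paper's main diagrams are relative $K$-groups \emph{with support}, and the paper handles those not by delooping the Chern character through the Bass sequence but by taking Thomason hypercohomology with support of the spectrum-level equivalences $\mathcal{K}^{(i)}(O,\varepsilon)\simeq \mathcal{HN}^{(i)}(O,\varepsilon)$ of [6] (Theorems 4.12--4.15). Redirecting your last paragraph to that mechanism, and replacing the circular citation by an actual verification (or an honest attribution of the whole statement to [3]), would bring your write-up in line with what the paper does.
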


In [6], Corti\~nas-Haesemeyer-Weibel show a space level version of Goodwillie's theorems in appendix B.

For every  nilpotent sheaf of ideal $I$, we define $K(O,I)$ and $HN(O,I)$ as the following presheaves respectively:
\[
 U \rightarrow K(O(U),I(U))
\]
and
\[
 U \rightarrow HN(O(U),I(U)).
\]

We write $\mathcal{K}(O,I)$ and $\mathcal{HN}(O,I)$ for the presheaves of spectrum whose initial spaces are $K(O,I)$ and $HN(O,I)$ respectively. 
Moreover, one define $\mathcal{K}^{(i)}(O,I)$ as the homotopy fiber of $\mathcal{K}(O,I)$ on which $\psi^{k}-k^{i}$ acts acyclicly. And we define $\mathcal{HN}^{(i)}(O,I)$ similarly. 
Goodwillie's theorem and Cathelineau's isomorphism can be generalized in the following way.
\begin{theorem}
Cortinas-Haesemeyer-Weibel[6]

 The relative Chern character induces homotopy equivalence of spectra:
\[
 Ch: \mathcal{K}(O,I) \simeq \mathcal{HN}(O,I)
\]
and 
\[
 Ch: \mathcal{K}^{(i)}(O,I) \simeq \mathcal{HN}^{(i)}(O,I).
\]
\end{theorem}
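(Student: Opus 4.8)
The plan is to deduce the spectrum-level statement from the homotopy-group-level theorems of Goodwillie and Cathelineau by checking everything sectionwise. An objectwise weak equivalence of presheaves of spectra is in particular a weak equivalence, so it suffices to show that for every open $U$ the two maps
\[
 Ch: K(O(U),I(U)) \to HN(O(U),I(U)), \qquad Ch: K^{(i)}(O(U),I(U)) \to HN^{(i)}(O(U),I(U))
\]
are weak equivalences of spectra; for the second of these one also needs the Adams eigen-summands to be produced functorially in $U$ and compatibly with $Ch$.

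First I would settle the first assertion. Fix an open $U$. Since $I$ is a nilpotent sheaf of ideals, $I(U)$ is a nilpotent ideal of the commutative $\mathbb{Q}$-algebra $O(U)$, so Goodwillie's theorem ([8]) applies and gives isomorphisms $K_n(O(U),I(U)) \xrightarrow{\ \cong\ } HN_n(O(U),I(U))$ for all $n \geq 0$, identifying the connective covers of the two spectra. To push the isomorphism into negative degrees I would use the Bass fundamental exact sequence recalled in Section~\ref{Adams operations on K-groups}: both $\mathcal{K}(-,I)$ and $\mathcal{HN}(-,I)$ obey a fundamental theorem with matching $\mathrm{Nil}$-terms, the Chern character is compatible with the associated $t$-delooping, and a descending induction on $n$ then gives $K_n(O(U),I(U)) \cong HN_n(O(U),I(U))$ for every $n \in \mathbb{Z}$. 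Hence $Ch$ is an objectwise, and therefore a presheaf-level, weak equivalence.

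For the refinement I would invoke Cathelineau's theorem ([3]), which says that Goodwillie's isomorphism respects Adams operations, i.e. $K_n^{(i)}(O(U),I(U)) = HN_n^{(i)}(O(U),I(U))$. By the constructions recalled in Sections~\ref{Adams operations on K-groups} and~\ref{Goodwillie-type and Cathelineau-type results}, $\mathcal{K}^{(i)}(O,I)$ and $\mathcal{HN}^{(i)}(O,I)$ are the summands of the rationalized spectra on which $\psi^k$ acts by $k^i$, and the rationalized spectra split as the (weak) product of these eigen-summands. Since the Chern character commutes with $\psi^k$, it carries the $i$-th eigen-summand to the $i$-th eigen-summand, so $Ch$ restricts to a map $\mathcal{K}^{(i)}(O,I) \to \mathcal{HN}^{(i)}(O,I)$; on each section this is a weak equivalence by Cathelineau together with the connective-to-nonconnective step above applied to each eigenpiece, and hence it is a weak equivalence of presheaves of spectra.

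The hard part is the $\psi^k$-equivariance of the Chern character at the level of spectra rather than of homotopy groups: one must construct functorial Adams weight decompositions of $\mathcal{K}^{\mathbb{Q}}(O,I)$ and of $\mathcal{HN}^{\mathbb{Q}}(O,I)$ and a coherently homotopy-commutative compatibility between $Ch$ and the $\psi^k$, so that the decomposition of one spectrum is genuinely carried onto that of the other; this is exactly what must be extracted from the appendix of [6]. A secondary point requiring care is the legitimacy of the descending induction into negative degrees, namely that $Ch$ commutes with the Bass fundamental sequence for the relative functors $\mathcal{K}(-,I)$ and $\mathcal{HN}(-,I)$; since both satisfy the fundamental theorem and their $\mathrm{Nil}$-terms correspond under the Chern character, this part is routine once the equivariant framework is in place.
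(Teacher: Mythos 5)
First, note that the paper does not prove this statement at all: it is imported verbatim from Corti\~nas--Haesemeyer--Weibel [6, Appendix B] as a black box (``In [6], Corti\~nas-Haesemeyer-Weibel show a space level version of Goodwillie's theorems''). Your proposal tries to reconstruct it from the group-level theorems of Goodwillie and Cathelineau, and the reconstruction has a genuine gap exactly where you flag one. For the second equivalence, the objects $\mathcal{K}^{(i)}(O,I)$ and $\mathcal{HN}^{(i)}(O,I)$ are defined by spectrum-level constructions (homotopy fibers of $\psi^k-k^i$, or summands of a functorial splitting). Before you can even speak of a map $Ch:\mathcal{K}^{(i)}(O,I)\to\mathcal{HN}^{(i)}(O,I)$, let alone check it on homotopy groups via Cathelineau, you need the Chern character to commute with the Adams operations coherently at the level of presheaves of spectra and to carry one eigen-splitting onto the other. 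You correctly identify this as ``the hard part'' and then write that it ``must be extracted from the appendix of [6]'' --- but that compatibility \emph{is} the content of the theorem being proved; deferring it to [6] makes the argument circular with respect to the statement. Cathelineau's theorem, being an identity of graded groups, cannot supply it.

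A secondary problem is the descending induction into negative degrees via a ``Bass fundamental exact sequence with matching $\mathrm{Nil}$-terms'' for $\mathcal{HN}(-,I)$. Negative cyclic homology does not satisfy the Bass fundamental theorem in the form you need, and the claimed compatibility of $Ch$ with the two $t$-deloopings is asserted, not proved. Fortunately this step is unnecessary: for a nilpotent ideal $I$ the relative theories vanish in nonpositive degrees on both sides --- negative $K$-groups are nilinvariant (induct through the Bass definition using $K_0(A)\cong K_0(A/I)$), and $HN_n(A,I)\cong HC_{n-1}(A,I)=0$ for $n\le 0$ since relative periodic cyclic homology of a nilpotent ideal vanishes by Goodwillie. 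With that substitution your argument for the first equivalence (objectwise $\pi_*$-isomorphism implies objectwise, hence presheaf-level, weak equivalence) is sound; the eigen-piece statement, however, still rests entirely on the unproven spectrum-level $\psi^k$-equivariance.
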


Now, let $X$ be a scheme essenially finite type over a field $k$, where $Char k=0$. Let $Y$ be a closed subset in a scheme $X$ and $U = X - Y$. 

Let $\mathbb{H}(X,\bullet)$ denote Thomason's hypercohomology of spectra. We have the following 
Nine-diagrams(each column and row are homotopy fibration):

\[
  \begin{CD}
     \mathbb{H}_{Y}(X, \mathcal{K}(O, \varepsilon)) @>>> \mathbb{H}(X, \mathcal{K}(O, \varepsilon)) @>>>  \mathbb{H}(U, \mathcal{K}(O, \varepsilon)) \\
     @VVV  @VVV   @VVV  \\
     \mathbb{H}_{Y}(X, \mathcal{K}(O_{X}[\varepsilon])) @>>> \mathbb{H}(X, \mathcal{K}(O_{X}[\varepsilon])) @>>>  \mathbb{H}(U, \mathcal{K}(O_{U}[\varepsilon])) \\
     @VVV  @VVV   @VVV  \\
     \mathbb{H}_{Y}(X, \mathcal{K}(O_{X})) @>>> \mathbb{H}(X, \mathcal{K}(O_{X})) @>>>  \mathbb{H}(U, \mathcal{K}(O_{U})) \\
  \end{CD}
\]
and

\[
  \begin{CD}
     \mathbb{H}_{Y}(X, \mathcal{HN}(O, \varepsilon)) @>>> \mathbb{H}(X, \mathcal{HN}(O, \varepsilon)) @>>>  \mathbb{H}(U, \mathcal{HN}(O, \varepsilon)) \\
     @VVV  @VVV   @VVV  \\
     \mathbb{H}_{Y}(X, \mathcal{HN}(O_{X}[\varepsilon])) @>>> \mathbb{H}(X, \mathcal{HN}(O_{X}[\varepsilon])) @>>>  \mathbb{H}(U, \mathcal{HN}(O_{U}[\varepsilon])) \\
     @VVV  @VVV   @VVV  \\
     \mathbb{H}_{Y}(X, \mathcal{HN}(O_{X})) @>>> \mathbb{H}(X, \mathcal{HN}(O_{X})) @>>>  \mathbb{H}(U, \mathcal{HN}(O_{U})) \\
  \end{CD}
\]

The above diagrams result in the following result
\begin{theorem} 
$\mathbb{H}_{Y}(X, \mathcal{K}(O, \varepsilon))$ is the homotpy fibre of 
\[
 \mathbb{H}_{Y}(X, \mathcal{K}(O_{X}[\varepsilon])) \rightarrow \mathbb{H}_{Y}(X, \mathcal{K}(O_{X})),
\]
and $\mathbb{H}_{Y}(X, \mathcal{HN}(O, \varepsilon))$ is the homotopy fibre of
\[
 \mathbb{H}_{Y}(X, \mathcal{HN}(O_{X}[\varepsilon])) \rightarrow \mathbb{H}_{Y}(X, \mathcal{HN}(O_{X})).
\]
\end{theorem}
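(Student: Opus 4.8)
The plan is to deduce the statement directly from the two nine-diagrams displayed immediately above it, by means of the standard $3\times 3$ lemma for homotopy fibrations of spectra.

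First I would record why, in each nine-diagram, all three rows together with the middle and right-hand columns are homotopy fibrations. The rows are homotopy fibrations by the very definition of hypercohomology with support: for a presheaf of spectra $\mathcal{F}$ on $X$, $\mathbb{H}_{Y}(X,\mathcal{F})$ is defined as the homotopy fibre of $\mathbb{H}(X,\mathcal{F}) \to \mathbb{H}(U,\mathcal{F})$, so $\mathbb{H}_{Y}(X,\mathcal{F}) \to \mathbb{H}(X,\mathcal{F}) \to \mathbb{H}(U,\mathcal{F})$ is a homotopy fibration for each of the three presheaves occurring as rows. For the middle column, recall from the Corti\~nas--Haesemeyer--Weibel construction recalled above (following [6]) that $\mathcal{K}(O,\varepsilon)$ is defined objectwise as the homotopy fibre of $\mathcal{K}(O_{X}[\varepsilon]) \to \mathcal{K}(O_{X})$; since Thomason hypercohomology $\mathbb{H}(X,-)$ is a homotopy limit, it carries objectwise homotopy fibrations of presheaves of spectra to homotopy fibrations, hence the middle column is one. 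The same argument over the open subscheme $U$ handles the right-hand column, and the analogous remarks apply verbatim with $\mathcal{HN}$ in place of $\mathcal{K}$.

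Next I would invoke the $3\times 3$ lemma in the triangulated homotopy category of spectra: in a commutative $3\times 3$ array in which all three rows and two of the three columns are homotopy fibrations, the remaining column is a homotopy fibration as well (a formal consequence of the octahedral axiom, i.e.\ Verdier's $3\times 3$ lemma). Applied to the $\mathcal{K}$-nine-diagram, whose rows, middle column and right column were just shown to be homotopy fibrations, this forces the left column $\mathbb{H}_{Y}(X,\mathcal{K}(O,\varepsilon)) \to \mathbb{H}_{Y}(X,\mathcal{K}(O_{X}[\varepsilon])) \to \mathbb{H}_{Y}(X,\mathcal{K}(O_{X}))$ to be a homotopy fibration, which is exactly the assertion that $\mathbb{H}_{Y}(X,\mathcal{K}(O,\varepsilon))$ is the homotopy fibre of $\mathbb{H}_{Y}(X,\mathcal{K}(O_{X}[\varepsilon])) \to \mathbb{H}_{Y}(X,\mathcal{K}(O_{X}))$. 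Running the identical argument on the $\mathcal{HN}$-nine-diagram gives the negative cyclic statement.

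Equivalently — and this is really the same argument repackaged — one can bypass the nine-diagram: since $\mathcal{K}(O,\varepsilon)$ is objectwise the homotopy fibre of $\mathcal{K}(O_{X}[\varepsilon]) \to \mathcal{K}(O_{X})$ and $\mathbb{H}_{Y}(X,-)$ is itself built out of homotopy limits, the two operations commute, which is the claim. The only genuinely delicate point, and the place where I expect a careful reader to want a sentence of justification, is the interchange of iterated homotopy fibres, i.e.\ that the total homotopy fibre of a commuting square of spectra does not depend on the order in which the two fibres are formed; this is standard but requires working with suitably fibrant models (or in an $\infty$-categorical framework). Beyond this bookkeeping I do not anticipate any real obstacle.
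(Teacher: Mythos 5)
Your proposal is correct and follows essentially the same route as the paper: the paper simply displays the two nine-diagrams (asserting all rows and columns are homotopy fibrations) and reads the theorem off from them, which is exactly the $3\times 3$/interchange-of-homotopy-fibres argument you spell out. Your version is in fact more careful than the paper's, since you justify why the rows and the middle and right columns are fibrations before deducing the left column, rather than taking all nine fibration sequences as given.
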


Since both $\mathcal{K}$ and $\mathcal{HN}$ satisfy Zariski excision, we have the following identifications:
\[
 K(X \ on \ Y)=\mathbb{H}_{Y}(X, \mathcal{K}(O_{X})), 
\]

\[
 K(X[\varepsilon] \ on \ Y[\varepsilon])=\mathbb{H}_{Y}(X, \mathcal{K}(O_{X}[\varepsilon])), 
\]
and similiar identifications for $\mathcal{HN}$.

Combining Goodwillie's isomorphism(space version) with the above result, we have proved the following theorem, which can be considered as a Goodwillie-type isomorphism for relative 
K-groups with support.
\begin{theorem}
Let $K_{n}(X[\varepsilon] \ on \ Y[\varepsilon], \varepsilon)$ denote the kernel of 
\[
  K_{n}(X[\varepsilon] \ on \ Y[\varepsilon]) \rightarrow K_{n}(X \ on \ Y)
\]
and $HN_{n}(X[\varepsilon] \ on \ Y[\varepsilon], \varepsilon)$ denote the kernel of 
\[
  HN_{n}(X[\varepsilon] \ on \ Y[\varepsilon]) \rightarrow HN_{n}(X \ on \ Y),
\]
we have
\[
 K_{n}(X[\varepsilon] \ on \ Y[\varepsilon], \varepsilon) = HN_{n}(X[\varepsilon] \ on \ Y[\varepsilon], \varepsilon).
\]
\end{theorem}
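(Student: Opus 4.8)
The plan is to deduce the statement directly from the "Nine-diagram" comparison and the space-level Goodwillie/Cathelineau isomorphism (Theorem 4.9), both of which are available to us. The essential point is that taking homotopy fibres commutes with taking homotopy fibres, and that hypercohomology of a homotopy fibration of presheaves of spectra is again a homotopy fibration.

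First I would set up notation precisely. Write $Y$ for a closed subset of $X$ and $U = X - Y$, as in the statement preceding Theorem 4.8. For a presheaf of spectra $\mathcal{F}$ I use $\mathbb{H}_Y(X,\mathcal{F})$ for the homotopy fibre of $\mathbb{H}(X,\mathcal{F}) \to \mathbb{H}(U,\mathcal{F})$. Since $\mathcal{K}$ and $\mathcal{HN}$ both satisfy Zariski excision, the identifications recorded just before the statement give
\[
\mathbb{H}_Y(X,\mathcal{K}(O_X)) \simeq K(X \ on \ Y), \qquad \mathbb{H}_Y(X,\mathcal{K}(O_X[\varepsilon])) \simeq K(X[\varepsilon] \ on \ Y[\varepsilon]),
\]
and likewise for $\mathcal{HN}$. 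By Theorem 4.8, $\mathbb{H}_Y(X,\mathcal{K}(O,\varepsilon))$ is the homotopy fibre of the map $K(X[\varepsilon]\ on\ Y[\varepsilon]) \to K(X\ on\ Y)$ induced by $\varepsilon = 0$, and similarly $\mathbb{H}_Y(X,\mathcal{HN}(O,\varepsilon))$ is the homotopy fibre of $HN(X[\varepsilon]\ on\ Y[\varepsilon]) \to HN(X\ on\ Y)$.

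Next I would invoke Theorem 4.9 (Corti\~nas--Haesemeyer--Weibel): the relative Chern character is a homotopy equivalence of presheaves of spectra $\mathcal{K}(O,\varepsilon) \simeq \mathcal{HN}(O,\varepsilon)$. Applying $\mathbb{H}_Y(X,-)$, which preserves homotopy equivalences of presheaves, yields a homotopy equivalence $\mathbb{H}_Y(X,\mathcal{K}(O,\varepsilon)) \simeq \mathbb{H}_Y(X,\mathcal{HN}(O,\varepsilon))$. Combining with the two homotopy-fibre identifications from the previous paragraph, the homotopy fibre of $K(X[\varepsilon]\ on\ Y[\varepsilon]) \to K(X\ on\ Y)$ is equivalent to the homotopy fibre of $HN(X[\varepsilon]\ on\ Y[\varepsilon]) \to HN(X\ on\ Y)$, compatibly with the Chern character maps. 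Taking $\pi_n$ of both sides, and noting that the relevant maps on homotopy groups are split surjective (the section $\varepsilon = 0$ composed with the inclusion $R \hookrightarrow R[\varepsilon]$ induces a splitting at the level of spectra, so the long exact sequence of the fibration breaks into short exact sequences), one identifies $\pi_n$ of the fibre with the kernel described in the statement. This gives
\[
K_n(X[\varepsilon]\ on\ Y[\varepsilon],\varepsilon) \;=\; \pi_n\bigl(\mathbb{H}_Y(X,\mathcal{K}(O,\varepsilon))\bigr) \;=\; \pi_n\bigl(\mathbb{H}_Y(X,\mathcal{HN}(O,\varepsilon))\bigr) \;=\; HN_n(X[\varepsilon]\ on\ Y[\varepsilon],\varepsilon),
\]
which is exactly the claim.

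The step I expect to require the most care is not the formal manipulation but verifying that "the kernel" in the statement genuinely coincides with $\pi_n$ of the homotopy fibre, i.e. that the Goodwillie splitting persists after taking hypercohomology with supports. Concretely one must check that the map $K_n(X[\varepsilon]\ on\ Y[\varepsilon]) \to K_n(X\ on\ Y)$ is surjective with the induced inclusion $K_n(X\ on\ Y) \hookrightarrow K_n(X[\varepsilon]\ on\ Y[\varepsilon])$ providing a splitting, so that $\pi_n$ of the fibre is the kernel with no $\pi_{n-1}$ contribution; this follows because $R \to R[\varepsilon] \to R$ is the identity functorially in $R$, hence induces a retraction of presheaves of spectra $\mathcal{K}(O_X[\varepsilon]) \to \mathcal{K}(O_X)$ split by $\mathcal{K}(O_X) \to \mathcal{K}(O_X[\varepsilon])$, and $\mathbb{H}_Y(X,-)$ preserves this retraction. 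The same remark applies verbatim to $\mathcal{HN}$, and the Chern character is compatible with all of it since it is a natural transformation of functors on $Sch^{op}/k$. Everything else is a diagram chase in the two Nine-diagrams already displayed.
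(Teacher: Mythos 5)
Your argument is correct and follows essentially the same route as the paper: the Nine-diagrams identify $\mathbb{H}_Y(X,\mathcal{K}(O,\varepsilon))$ and $\mathbb{H}_Y(X,\mathcal{HN}(O,\varepsilon))$ as the homotopy fibres of the $\varepsilon=0$ maps, Zariski excision identifies the hypercohomology with supports with K-theory (resp.\ $HN$) with supports, and the Corti\~nas--Haesemeyer--Weibel space-level Goodwillie equivalence $\mathcal{K}(O,\varepsilon)\simeq\mathcal{HN}(O,\varepsilon)$ is transported through $\mathbb{H}_Y(X,-)$. Your extra remark that the retraction $R\to R[\varepsilon]\to R$ splits the fibration, so that $\pi_n$ of the fibre really is the stated kernel, is a point the paper leaves implicit, and it is a worthwhile addition.
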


According to [6], there exists  the following two splitting fibrations:
\[
 \mathcal{K}^{(i)}(O, \varepsilon) \rightarrow \mathcal{K}(O, \varepsilon) \rightarrow \prod_{j\neq i}\mathcal{K}^{(j)}(O, \varepsilon),
\]
and
\[
 \mathcal{HN}^{(i)}(O, \varepsilon) \rightarrow \mathcal{HN}(O, \varepsilon) \rightarrow \prod_{j\neq i}\mathcal{HN}^{(j)}(O, \varepsilon).
\]

Sine taking $\mathbb{H}_{Y}(X,-)$ perserves homotopy fibrations, there exists the following two splitting fibrations:
 \[
  \mathbb{H}_{Y}(X, \mathcal{K}^{(i)}(O, \varepsilon)) \to \mathbb{H}_{Y}(X, \mathcal{K}(O, \varepsilon))  \xrightarrow{\psi^{k}-k^{i}}    \mathbb{H}_{Y}(X ,\prod_{j\neq i}\mathcal{K}^{(j)}(O, \varepsilon)),
 \]
\[
 \mathbb{H}_{Y}(X, \mathcal{HN}^{(i)}(O, \varepsilon)) \to \mathbb{H}_{Y}(X, \mathcal{HN}(O, \varepsilon))  \xrightarrow{\psi^{k}-k^{i+1}}    \mathbb{H}_{Y}(X,\prod_{j\neq i}\mathcal{HN}^{(j)}(O, \varepsilon )).
\]

Passing to group level, we obtain the following results:
\begin{theorem}
\[
 \mathbb{H}^{-n}_{Y}(X, \mathcal{K}^{(i)}(O, \varepsilon)) = \{x \in \mathbb{H}^{-n}_{Y}(X, \mathcal{K}(O, \varepsilon))| \psi^{k}(x)-k^{i}(x)=0 \}.
\]
\[
 \mathbb{H}^{-n}_{Y}(X, \mathcal{HN}^{(i)}(O, \varepsilon)) = \{x \in \mathbb{H}^{-n}_{Y}(X, \mathcal{HN}(O, \varepsilon))| \psi^{k}(x)-k^{i+1}(x)=0 \}.
\]
\end{theorem}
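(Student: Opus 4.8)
The plan is to obtain the statement by passing to homotopy groups in the two splitting homotopy fibrations displayed just above. First I would record that $\mathbb{H}_{Y}(X,-)$, being a homotopy limit, turns the eigen-decomposition $\mathcal{K}(O,\varepsilon) \simeq \prod_{j}\mathcal{K}^{(j)}(O,\varepsilon)$ into a product decomposition $\mathbb{H}_{Y}(X, \mathcal{K}(O, \varepsilon)) \simeq \prod_{j}\mathbb{H}_{Y}(X, \mathcal{K}^{(j)}(O, \varepsilon))$, and likewise for $\mathcal{HN}$; since $X$ has finite Krull dimension and $(\varepsilon)$ is nilpotent, in each homotopy degree only finitely many weights $j$ contribute, so these products are finite and raise no convergence issue.

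Next I would apply $\pi_{n}$ to the splitting fibration $\mathbb{H}_{Y}(X, \mathcal{K}^{(i)}(O, \varepsilon)) \to \mathbb{H}_{Y}(X, \mathcal{K}(O, \varepsilon)) \xrightarrow{\psi^{k}-k^{i}} \mathbb{H}_{Y}(X, \prod_{j\neq i}\mathcal{K}^{(j)}(O, \varepsilon))$. Because it admits a retraction --- after the product decomposition it is just the inclusion of, and the projection off, one factor --- its long exact homotopy sequence breaks into short exact sequences
{\footnotesize
\[
 0 \to \pi_{n}\mathbb{H}_{Y}(X, \mathcal{K}^{(i)}(O, \varepsilon)) \to \pi_{n}\mathbb{H}_{Y}(X, \mathcal{K}(O, \varepsilon)) \xrightarrow{\psi^{k}-k^{i}} \pi_{n}\mathbb{H}_{Y}\bigl(X, \prod_{j\neq i}\mathcal{K}^{(j)}(O, \varepsilon)\bigr) \to 0 .
\]
}
Exactness in the middle identifies $\pi_{n}\mathbb{H}_{Y}(X, \mathcal{K}^{(i)}(O, \varepsilon))$ with the kernel of $\psi^{k}-k^{i}$ acting on $\pi_{n}\mathbb{H}_{Y}(X, \mathcal{K}(O, \varepsilon))$; rewriting $\pi_{n}\mathbb{H}_{Y}(X,-) = \mathbb{H}^{-n}_{Y}(X,-)$ then yields the first formula. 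The $\mathcal{HN}$ statement follows word for word from the second displayed fibration, with $\mathcal{K}$ replaced by $\mathcal{HN}$ and $k^{i}$ by $k^{i+1}$.

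The step I expect to carry the real content, and which I would write out carefully, is the identification of the self-map labelled $\psi^{k}-k^{i}$ on homotopy groups with the honest operator $x \mapsto \psi^{k}(x)-k^{i}x$ appearing in the statement. This rests on the fact that the decomposition $\prod_{j}\mathcal{K}^{(j)}(O,\varepsilon)$ simultaneously diagonalises the Adams operations, so that $\psi^{k}$ acts on the $j$-th factor as multiplication by the scalar $k^{j}$ (and by $k^{j+1}$ on the $\mathcal{HN}$ side, which accounts for the shift in the exponent). Here characteristic zero is essential: by Goodwillie's theorem and the computations recalled in Section 4 the relative groups $K_{n}(O(U),\varepsilon(U))$ and $HN_{n}(O(U),\varepsilon(U))$ are $\mathbb{Q}$-vector spaces, so $k^{j}-k^{i}$ is invertible whenever $j\neq i$. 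Consequently $\psi^{k}-k^{i}$ vanishes on the $i$-th factor and is an isomorphism on each of the others, which both forces the short exact sequence above and pins down its kernel as precisely $\mathbb{H}^{-n}_{Y}(X, \mathcal{K}^{(i)}(O, \varepsilon))$. Granting this diagonalisation, the rest is routine bookkeeping with the long exact homotopy sequence.
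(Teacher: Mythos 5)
Your proposal is correct and follows the same route the paper takes: the paper deduces the theorem by ``passing to group level'' in the two splitting fibrations obtained by applying $\mathbb{H}_{Y}(X,-)$ to the eigen-decompositions of $\mathcal{K}(O,\varepsilon)$ and $\mathcal{HN}(O,\varepsilon)$, which is exactly your argument. Your additional remarks --- that the splitting forces the long exact homotopy sequence to break into short exact sequences, and that $\psi^{k}-k^{i}$ is invertible off the weight-$i$ factor because the relative groups are $\mathbb{Q}$-vector spaces --- merely spell out details the paper leaves implicit.
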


We have shown that
\[
 \mathbb{H}^{-n}_{Y}(X, \mathcal{K}(O, \varepsilon)) = K_{n}(X[\varepsilon] \ on \ Y[\varepsilon], \varepsilon),
\]
and
\[
 \mathbb{H}^{-n}_{Y}(X, \mathcal{HN}(O, \varepsilon)) = HN_{n}(X[\varepsilon] \ on \ Y[\varepsilon], \varepsilon).
\]
Therefore, the homotopy equivalences
\[ 
  \mathcal{K}(O, \varepsilon) \simeq \mathcal{HN}(O, \varepsilon)
\]
and
\[
 \mathcal{K}^{(i)}(O, \varepsilon) \simeq\mathcal{HN}^{(i)}(O, \varepsilon),
\]
give us the following  refiner result:

\begin{theorem}
\[
 K_{n}^{(i)}(X[\varepsilon] \ on \ Y[\varepsilon],\varepsilon) = HN_{n}^{(i)}(X[\varepsilon] \ on \ Y[\varepsilon],\varepsilon).
\]
\end{theorem}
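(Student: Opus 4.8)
The plan is to obtain this equality by assembling three ingredients that are already available: the space-level refined Chern character equivalence of Corti\~nas-Haesemeyer-Weibel, the fact that Thomason hypercohomology with supports preserves homotopy fibrations, and the identifications of the relevant hypercohomology groups with relative $K$- and $HN$-groups with support established above.

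First I would start from the homotopy equivalence of presheaves of spectra
\[
 \mathcal{K}^{(i)}(O, \varepsilon) \simeq \mathcal{HN}^{(i)}(O, \varepsilon)
\]
and apply the functor $\mathbb{H}_{Y}(X, -)$. Since $\mathbb{H}_{Y}(X, -)$ carries homotopy fibrations to homotopy fibrations, it preserves homotopy equivalences, whence
\[
 \mathbb{H}_{Y}(X, \mathcal{K}^{(i)}(O, \varepsilon)) \simeq \mathbb{H}_{Y}(X, \mathcal{HN}^{(i)}(O, \varepsilon)).
\]
Taking the homotopy group in degree $n$, i.e.\ applying $\mathbb{H}^{-n}_{Y}(X, -)$, then yields
\[
 \mathbb{H}^{-n}_{Y}(X, \mathcal{K}^{(i)}(O, \varepsilon)) = \mathbb{H}^{-n}_{Y}(X, \mathcal{HN}^{(i)}(O, \varepsilon)).
\]

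Next I would identify each side with the group named in the statement. On the $K$-theory side, the splitting fibration
\[
 \mathbb{H}_{Y}(X, \mathcal{K}^{(i)}(O, \varepsilon)) \to \mathbb{H}_{Y}(X, \mathcal{K}(O, \varepsilon)) \xrightarrow{\psi^{k}-k^{i}} \mathbb{H}_{Y}(X, \textstyle\prod_{j\neq i}\mathcal{K}^{(j)}(O, \varepsilon))
\]
exhibits $\mathbb{H}^{-n}_{Y}(X, \mathcal{K}^{(i)}(O, \varepsilon))$ as the $\psi^{k}=k^{i}$ eigenspace inside $\mathbb{H}^{-n}_{Y}(X, \mathcal{K}(O, \varepsilon))$; together with the identification $\mathbb{H}^{-n}_{Y}(X, \mathcal{K}(O, \varepsilon)) = K_{n}(X[\varepsilon] \ on \ Y[\varepsilon], \varepsilon)$ obtained above from Zariski excision and the nine-diagram, this eigenspace is by definition $K_{n}^{(i)}(X[\varepsilon] \ on \ Y[\varepsilon], \varepsilon)$. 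Running the identical argument with $\mathcal{HN}$ in place of $\mathcal{K}$ identifies $\mathbb{H}^{-n}_{Y}(X, \mathcal{HN}^{(i)}(O, \varepsilon))$ with $HN_{n}^{(i)}(X[\varepsilon] \ on \ Y[\varepsilon], \varepsilon)$, and concatenating the two identifications with the displayed equality completes the proof.

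The one point that needs care is the bookkeeping of Adams weights: one must confirm that the index $(i)$ labelling $\mathcal{HN}^{(i)}$ --- whose $\psi^{k}$-eigenvalue is recorded here as $k^{i+1}$ --- is precisely the normalization under which the refined Chern character respects the grading, i.e.\ the convention making Cathelineau's identity $K_{n}^{(i)}(A,I)=HN_{n}^{(i)}(A,I)$ hold with matching superscripts; once that is pinned down there is no remaining content. A secondary check is that $\mathbb{H}_{Y}(X,-)$ is compatible with the eigenspace decomposition, which holds because, after tensoring with $\mathbb{Q}$, the splitting fibrations are genuine product decompositions and $\mathbb{H}_{Y}(X,-)$ commutes with finite products. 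I do not expect any serious obstacle beyond this normalization check, since every other step is a formal consequence of results already established in the excerpt.
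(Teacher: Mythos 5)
Your proposal is correct and follows essentially the same route as the paper: apply $\mathbb{H}_{Y}(X,-)$ to the Corti\~nas--Haesemeyer--Weibel space-level equivalence $\mathcal{K}^{(i)}(O,\varepsilon)\simeq\mathcal{HN}^{(i)}(O,\varepsilon)$, use the splitting fibrations to identify the resulting homotopy groups as the $\psi^{k}$-eigenspaces inside $K_{n}(X[\varepsilon]\ on\ Y[\varepsilon],\varepsilon)$ and $HN_{n}(X[\varepsilon]\ on\ Y[\varepsilon],\varepsilon)$, and conclude. Your explicit attention to the $k^{i}$ versus $k^{i+1}$ weight normalization is a point the paper records in its displayed fibrations but does not otherwise comment on.
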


This result enables us to compute the relative  K-groups with support in terms of the relative negative cyclic groups with support. Now, we show an explicit computation on relative negative cyclic groups with support which will be used later.

\begin{theorem}
Suppose $X$ is a $d$-dimensional smooth projective variety over a field $k$, where $Char k=0$ and $y \in X^{(j)}$.  For any integer $m$, we have 
\[
 HN_{m}(O_{X,y}[\varepsilon] \ on \ y[\varepsilon],\varepsilon)= H_{y}^{j}(\Omega^{\bullet}_{O_{X,y}/\mathbb{Q}}),
\]
where $\Omega^{\bullet}_{O_{X,y}/\mathbb{Q}}=\Omega^{m+j-1}_{O_{X,y}/\mathbb{Q}}\oplus \Omega^{m+j-3}_{O_{X,y}/\mathbb{Q}}\oplus \dots$
\end{theorem}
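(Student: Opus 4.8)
The plan is to reduce the statement to a local computation on $S:=\operatorname{Spec}(O_{X,y})$ and then run the hypercohomology-with-supports spectral sequence for the relative negative cyclic spectrum $\mathcal{HN}(O,\varepsilon)$, with the cyclic-homology computation $HN_{n}(R[\varepsilon],\varepsilon)=\bigoplus_{s\ge 0}\Omega^{\,n-1-2s}_{R/\mathbb{Q}}$ of this section as the only serious input. Write $A:=O_{X,y}$, let $\mathfrak{m}\subset A$ be its maximal ideal, and let $\{y\}$ denote the closed point of $S$. Since $X/k$ is smooth and $y\in X^{(j)}$, $A$ is regular local of dimension $j$. First I would record, using Zariski excision for $\mathcal{HN}$ (hence for $\mathcal{HN}(O,\varepsilon)$, being the homotopy fibre of two functors with Zariski excision) and the identification $\mathbb{H}^{-n}_{Y}(X,\mathcal{HN}(O,\varepsilon))=HN_{n}(X[\varepsilon]\ on\ Y[\varepsilon],\varepsilon)$ obtained earlier, that
\[
 HN_{m}(O_{X,y}[\varepsilon]\ on\ y[\varepsilon],\varepsilon)\;=\;\mathbb{H}^{-m}_{\{y\}}\bigl(S,\mathcal{HN}(O,\varepsilon)\bigr).
\]
I would also observe that the direct limit over neighbourhoods of $y$ implicit in the definition of $\mathcal{HN}^{\bullet}(O_{X,y}\ on\ y)$ creates no difficulty for the \emph{relative} theory: its homotopy presheaves are the quasi-coherent sheaves $\bigoplus_{s\ge 0}\Omega^{\,n-1-2s}_{O/\mathbb{Q}}$, which commute with the localizations cutting out $A$, and $\mathcal{HN}(O,\varepsilon)$ is connective (homotopy concentrated in positive degrees), hence a finitary presheaf of spectra, so $\mathbb{H}_{\{y\}}(S,-)$ genuinely computes that limit.

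Next I would apply the hypercohomology-with-supports spectral sequence (the with-supports variant of the Brown--Gersten spectral sequence recalled above, which applies since $\mathcal{HN}(O,\varepsilon)$ satisfies Zariski excision and vanishes on $\emptyset$):
\[
 E_{2}^{p,q}\;=\;H^{p}_{\mathfrak{m}}\Bigl(\,\bigoplus_{s\ge 0}\Omega^{\,-q-1-2s}_{A/\mathbb{Q}}\,\Bigr)\;\Longrightarrow\;HN_{-p-q}(O_{X,y}[\varepsilon]\ on\ y[\varepsilon],\varepsilon),
\]
where I have used that the $(-q)$-th homotopy sheaf of $\mathcal{HN}(O,\varepsilon)$ on $S$ is the quasi-coherent sheaf attached to $\bigoplus_{s\ge 0}\Omega^{\,-q-1-2s}_{A/\mathbb{Q}}$, and where $H^{p}_{\mathfrak{m}}$ denotes local cohomology of $A$-modules with support at $\mathfrak{m}$. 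Now the key point: because $X/k$ is smooth in characteristic $0$, the Jacobi--Zariski sequence for $\mathbb{Q}\to k\to A$ is split exact, so $\Omega^{1}_{A/\mathbb{Q}}$, and therefore every $\Omega^{r}_{A/\mathbb{Q}}=\Lambda^{r}_{A}\Omega^{1}_{A/\mathbb{Q}}$, is a flat $A$-module; since $A$ is regular local of dimension $j$, hence Cohen--Macaulay, Grothendieck's vanishing theorem — applied after presenting a flat module as a filtered colimit of finite free modules — forces $H^{p}_{\mathfrak{m}}(\Omega^{r}_{A/\mathbb{Q}})=0$ for every $r$ and every $p\ne j$. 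Hence $E_{2}^{p,q}=0$ unless $p=j$, so the spectral sequence has a single nonzero column and degenerates.

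Finally I would read off the surviving term: taking $p=j$ and $-p-q=m$, so $q=-m-j$ and $-q-1-2s=m+j-1-2s$,
\[
 HN_{m}(O_{X,y}[\varepsilon]\ on\ y[\varepsilon],\varepsilon)=E_{2}^{j,-m-j}=\bigoplus_{s\ge 0}H^{j}_{\mathfrak{m}}\bigl(\Omega^{\,m+j-1-2s}_{A/\mathbb{Q}}\bigr)=H^{j}_{y}\bigl(\Omega^{\bullet}_{O_{X,y}/\mathbb{Q}}\bigr)
\]
with $\Omega^{\bullet}_{O_{X,y}/\mathbb{Q}}=\Omega^{m+j-1}_{O_{X,y}/\mathbb{Q}}\oplus\Omega^{m+j-3}_{O_{X,y}/\mathbb{Q}}\oplus\cdots$, which is exactly the assertion. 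The step I expect to be the main obstacle is the local-cohomology input of the previous paragraph: the absolute differential modules $\Omega^{r}_{O_{X,y}/\mathbb{Q}}$ are very far from finitely generated (already $\Omega^{1}_{k/\mathbb{Q}}$ is usually infinite-dimensional), so freeness of finite rank is unavailable and one must argue via flatness together with Grothendieck vanishing to keep the local cohomology concentrated in degree $j=\dim O_{X,y}$. The remaining steps — the identification in the first paragraph, the shape of the spectral sequence, and the index bookkeeping — are routine given the results of this section; as a consistency check, the case $j=0$ (where $y$ is the generic point and $O_{X,y}=k(X)$) specialises to $HN_{m}(k(X)[\varepsilon],\varepsilon)=\Omega^{m-1}_{k(X)/\mathbb{Q}}\oplus\Omega^{m-3}_{k(X)/\mathbb{Q}}\oplus\cdots$.
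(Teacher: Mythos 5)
Your proposal is correct and follows essentially the same route as the paper: reduce to $\operatorname{Spec}(O_{X,y})$, run the hypercohomology-with-supports spectral sequence for the relative theory $\mathcal{HN}(O,\varepsilon)$ with $E_2$-terms the local cohomology of $HN_{-q}(O_{X,y}[\varepsilon],\varepsilon)=\bigoplus_s\Omega^{-q-1-2s}_{O_{X,y}/\mathbb{Q}}$, and kill everything outside cohomological degree $j$. Your justification of that vanishing (flatness of $\Omega^r_{A/\mathbb{Q}}$ via the split Jacobi--Zariski sequence, then Lazard plus depth/Grothendieck vanishing for the regular local ring of dimension $j$) is just a more carefully spelled-out version of the paper's observation that $\Omega^n_{O_{X,y}/\mathbb{Q}}$ is a direct limit of copies of $O_{X,y}$ and hence has depth $j$.
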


\begin{proof}
$O_{X,y}$ is a regular local ring with dimension $j$, so the depth of $O_{X,y}$ is $j$. For each $n \in \mathbb{Z}$,  $\Omega^{n}_{O_{X,y}/\mathbb{Q}}$ can be written as a direct limit of 
$O_{X,y}'s$. Therefore, $\Omega^{n}_{O_{X,y}/\mathbb{Q}}$ has depth $j$.

Let's write $HN_{m}(O_{X,y}[\varepsilon] \ on \ y[\varepsilon],\varepsilon)$ for the kernel of the projection:
\[
HN_{m}(O_{X,y}[\varepsilon] \ on \ y[\varepsilon])  \xrightarrow{\varepsilon =0} HN_{m}(O_{X,y} \ on \ y).
\]
Then $HN_{m}(O_{X,y}[\varepsilon] \ on \ y[\varepsilon],\varepsilon)$ can be identified with $\mathbb{H}_{y}^{-m}(O_{X,y},HN(O_{X,y}[\varepsilon],\varepsilon))$,
where $HN(O_{X,y}[\varepsilon],\varepsilon)$ is the relative negative cyclic complex, that is the kernel of
\[
 HN(O_{X,y}[\varepsilon]) \xrightarrow{\varepsilon=0} HN(O_{X,y}).
\]

There is a spectral sequence :
\[
 H_{y}^{p}(O_{X,y}, H^{q}(HN(O_{X,y}[\varepsilon],\varepsilon))) \Longrightarrow \mathbb{H}_{y}^{-m}(HN(O_{X,y}[\varepsilon],\varepsilon)).
\]

By corollary 4.1.4, we have
 \[
 H^{q}(HN(O_{X,y}[\varepsilon],\varepsilon))= HN_{-q}(O_{X,y}[\varepsilon],\varepsilon)= \Omega^{-q-1}_{O_{X,y}/\mathbb{Q}}\oplus \Omega^{-q-3}_{O_{X,y}/\mathbb{Q}}\oplus \dots
 \]
 As each $\Omega^{n}_{O_{X,y}/\mathbb{Q}}$ has depth $j$, only $H_{y}^{j}(X,H^{q}(HN(O_{X,y}[\varepsilon],\varepsilon)))$ can survive because of the depth condition.
This means $q=-m-j$ and 
\[
 H^{-m-j}(HN(O_{X,y}[\varepsilon],\varepsilon))=HN_{m+j}(O_{X,y}[\varepsilon],\varepsilon)= \Omega^{m+j-1}_{O_{X,y}/\mathbb{Q}}\oplus \Omega^{m+j-3}_{O_{X,y}/\mathbb{Q}}\oplus \dots
\]
Let's write 
\[
\Omega^{\bullet}_{O_{X,y}/\mathbb{Q}} = \Omega^{m+j-1}_{O_{X,y}/\mathbb{Q}}\oplus \Omega^{m+j-3}_{O_{X,y}/\mathbb{Q}}\oplus \dots
\]
Thus 
\[
 \mathbb{H}_{y}^{-m}(HN(O_{X,y}[\varepsilon],\varepsilon))=H_{y}^{j}(\Omega^{\bullet}_{O_{X,y}/\mathbb{Q}}).
\]
this means
\[
 HN_{m}(O_{X,y}[\varepsilon] \ on \ y_{\varepsilon},\varepsilon)= H_{y}^{j}(\Omega^{\bullet}_{O_{X,y}/\mathbb{Q}}).
\]
\end{proof}

Repeating the above proof and noting corollary 4.1.3, we have the following finer result:
\begin{theorem}
 Suppose $X$ is a $d$-dimensional smooth projective variety over a fielf $k$, where $char k=0$ and $y \in X^{(j)}$. For any integer $m$, we have 
\[
 HN^{(i)}_{m}(O_{X,y}[\varepsilon] \ on \ y[\varepsilon],\varepsilon)= H_{y}^{j}(\Omega^{\bullet,(i)}_{O_{X,y}/\mathbb{Q}}),
\]
where 
\begin{equation}
\begin{cases}
 \begin{CD}
 \Omega_{O_{X}/ \mathbb{Q}}^{\bullet,(i)}= \Omega^{{2i-(m+j)-1}}_{O_{X}/ \mathbb{Q}}, for \  \frac{m+j}{2}  < \ i \leq m+j.\\
  \Omega_{O_{X}/ \mathbb{Q}}^{\bullet,(i)}= 0, else.
 \end{CD}
\end{cases}
\end{equation} 
\end{theorem}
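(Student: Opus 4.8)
The plan is to imitate the proof of the preceding theorem line by line, carrying the Adams weight along at each step and invoking corollary 4.1.3 in place of corollary 4.1.4. First I would record the depth input that makes the spectral sequence collapse: since $O_{X,y}$ is a regular local ring of Krull dimension $j$, it is Cohen--Macaulay of depth $j$, so $H^p_y(O_{X,y},O_{X,y})=0$ for $p\neq j$. Each module of absolute K\"ahler differentials $\Omega^n_{O_{X,y}/\mathbb{Q}}$ is a filtered colimit of finite free $O_{X,y}$-modules, and local cohomology commutes with filtered colimits, so $H^p_y(O_{X,y},\Omega^n_{O_{X,y}/\mathbb{Q}})=0$ for $p\neq j$; the same vanishing holds for any finite direct sum of such modules, in particular for the modules occurring in corollary 4.1.3.

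Next I would set up the identification
\[
 HN^{(i)}_m(O_{X,y}[\varepsilon]\ on\ y[\varepsilon],\varepsilon)\;\cong\;\mathbb{H}^{-m}_y\big(O_{X,y},\,\mathcal{HN}^{(i)}(O_{X,y}[\varepsilon],\varepsilon)\big),
\]
where $\mathcal{HN}^{(i)}(O_{X,y}[\varepsilon],\varepsilon)$ is the weight-$i$ eigenpiece of the relative negative cyclic complex. This rests on Zariski excision for $\mathcal{HN}$ together with the splitting fibrations $\mathcal{HN}^{(i)}(O,\varepsilon)\to\mathcal{HN}(O,\varepsilon)\to\prod_{j\neq i}\mathcal{HN}^{(j)}(O,\varepsilon)$ recalled earlier: since $\mathbb{H}_y(X,-)$ preserves homotopy fibrations, the Adams eigenspace decomposition is compatible both with the kernel defining the relative theory and with passing to hypercohomology with supports. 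Then I would run the local-to-global spectral sequence
\[
 E_2^{p,q}=H^p_y\big(O_{X,y},\,H^q(\mathcal{HN}^{(i)}(O_{X,y}[\varepsilon],\varepsilon))\big)\Longrightarrow \mathbb{H}^{-m}_y\big(\mathcal{HN}^{(i)}(O_{X,y}[\varepsilon],\varepsilon)\big).
\]

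Now I would plug in corollary 4.1.3: in cohomological degree $q$ the group $H^q(\mathcal{HN}^{(i)}(O_{X,y}[\varepsilon],\varepsilon))=HN^{(i)}_{-q}(O_{X,y}[\varepsilon],\varepsilon)$ equals the single module $\Omega^{2i+q-1}_{O_{X,y}/\mathbb{Q}}$ when $[\tfrac{-q}{2}]<i\le -q$ and vanishes otherwise, so it is always of the type handled in the first paragraph. Hence the $E_2$-page is concentrated in the column $p=j$, the spectral sequence degenerates, and convergence at total degree $-m$ forces $q=-m-j$, giving $\mathbb{H}^{-m}_y=H^j_y\big(O_{X,y},HN^{(i)}_{m+j}(O_{X,y}[\varepsilon],\varepsilon)\big)$. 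Applying corollary 4.1.3 once more with $n=m+j$ identifies $HN^{(i)}_{m+j}(O_{X,y}[\varepsilon],\varepsilon)$ with $\Omega^{2i-(m+j)-1}_{O_{X,y}/\mathbb{Q}}$ for $\tfrac{m+j}{2}<i\le m+j$ and with $0$ otherwise, i.e.\ with $\Omega^{\bullet,(i)}_{O_{X,y}/\mathbb{Q}}$, so $HN^{(i)}_m(O_{X,y}[\varepsilon]\ on\ y[\varepsilon],\varepsilon)=H^j_y(\Omega^{\bullet,(i)}_{O_{X,y}/\mathbb{Q}})$ as claimed.

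The only genuinely delicate point — and the step I would be most careful with — is the compatibility of the weight decomposition with the two operations $\ker(\varepsilon=0)$ and $\mathbb{H}_y(X,-)$, that is, that passing to the $\psi^k=k^i$ eigenpiece commutes with the homotopy fibre sequence $\mathcal{HN}(O,\varepsilon)\to\mathcal{HN}(O_X[\varepsilon])\to\mathcal{HN}(O_X)$ and with hypercohomology with supports. This is precisely what the splitting fibrations recalled above supply, so no new input beyond the non-graded version of the theorem is needed; everything else is the same depth-and-spectral-sequence bookkeeping already carried out there.
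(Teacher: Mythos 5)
Your proposal is correct and follows exactly the route the paper intends: its entire proof of this statement is the one-line instruction to repeat the argument of the preceding theorem with corollary 4.1.3 substituted for corollary 4.1.4, and your write-up carries out precisely that depth-and-spectral-sequence argument, weight by weight. The extra care you take with the compatibility of the Adams eigenspace decomposition with the relative fibre sequence and with $\mathbb{H}_y(X,-)$ is exactly the input the paper supplies via the splitting fibrations of Section 4.3, so nothing beyond the paper's own ingredients is used.
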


Combining with theorem 4.3.5 and 4.3.7, we have the following corollary
\begin{corollary}
Under the same assumption as above, we have
\[
 K_{m}(O_{X,y}[\varepsilon] \ on \ y[\varepsilon],\varepsilon)= H_{y}^{j}(\Omega^{\bullet}_{O_{X,y}/\mathbb{Q}}),
\]
where $\Omega^{\bullet}_{O_{X,y}/\mathbb{Q}}=\Omega^{m+j-1}_{O_{X,y}/\mathbb{Q}}\oplus \Omega^{m+j-3}_{O_{X,y}/\mathbb{Q}}\oplus \dots$

Moreover, we have
\[
 K^{(i)}_{m}(O_{X,y}[\varepsilon] \ on \ y[\varepsilon],\varepsilon)= H_{y}^{j}(\Omega^{\bullet,(i)}_{O_{X,y}/\mathbb{Q}}),
\]
where 
\begin{equation}
\begin{cases}
 \begin{CD}
 \Omega_{O_{X}/ \mathbb{Q}}^{\bullet,(i)}= \Omega^{{2i-(m+j)-1}}_{O_{X}/ \mathbb{Q}}, for \  \frac{m+j}{2}  < \ i \leq m+j.\\
  \Omega_{O_{X}/ \mathbb{Q}}^{\bullet,(i)}= 0, else.
 \end{CD}
\end{cases}
\end{equation} 
\end{corollary}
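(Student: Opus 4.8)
The plan is to obtain the corollary by composing two facts already established in this section: the Goodwillie–Cathelineau-type comparison identifying relative $K$-theory with support and relative negative cyclic homology with support, and the explicit evaluation of the latter in terms of local cohomology of K\"ahler differentials. No new idea is needed; the work is purely a matter of specializing the general statements to the local ring $O_{X,y}$ and keeping track of indices.

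First I would apply the isomorphism $K_{n}(X[\varepsilon] \ on \ Y[\varepsilon],\varepsilon)=HN_{n}(X[\varepsilon] \ on \ Y[\varepsilon],\varepsilon)$ (the Goodwillie-type comparison for relative $K$-groups with support proved above) to the pair $X=Spec\,O_{X,y}$, $Y=\{y\}$. Since $O_{X,y}$ is essentially of finite type over the characteristic-zero field $k$, the hypotheses of that comparison are met, and its left-hand side is by definition $K_{m}(O_{X,y}[\varepsilon] \ on \ y[\varepsilon],\varepsilon)$ while its right-hand side is $HN_{m}(O_{X,y}[\varepsilon] \ on \ y[\varepsilon],\varepsilon)$. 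Substituting the explicit computation $HN_{m}(O_{X,y}[\varepsilon] \ on \ y[\varepsilon],\varepsilon)=H_{y}^{j}(\Omega^{\bullet}_{O_{X,y}/\mathbb{Q}})$ with $\Omega^{\bullet}_{O_{X,y}/\mathbb{Q}}=\Omega^{m+j-1}_{O_{X,y}/\mathbb{Q}}\oplus\Omega^{m+j-3}_{O_{X,y}/\mathbb{Q}}\oplus\cdots$ then yields the first displayed formula. For the refined statement I would instead invoke the Adams-graded comparison $K_{n}^{(i)}(X[\varepsilon] \ on \ Y[\varepsilon],\varepsilon)=HN_{n}^{(i)}(X[\varepsilon] \ on \ Y[\varepsilon],\varepsilon)$; here one uses that the Adams operations $\psi^{k}$ are defined on the relative $K$-groups with support, including in negative degrees, which was arranged by the descending induction of Section 4.2. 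Feeding in the refined negative-cyclic computation $HN^{(i)}_{m}(O_{X,y}[\varepsilon] \ on \ y[\varepsilon],\varepsilon)=H_{y}^{j}(\Omega^{\bullet,(i)}_{O_{X,y}/\mathbb{Q}})$ then gives the second displayed formula.

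The only point requiring genuine care, and the closest thing to an obstacle, is the compatibility of eigenvalue conventions: the splitting fibrations recorded above use $\psi^{k}-k^{i}$ on the $K$-side and $\psi^{k}-k^{i+1}$ on the $HN$-side, so one must check that this shift has already been absorbed into the indexing of the cited statements, i.e. that the superscript $(i)$ in $K^{(i)}_{m}(\cdots)$ matches the superscript $(i)$ in $\Omega^{\bullet,(i)}_{O_{X,y}/\mathbb{Q}}$ once the refined $HN$ computation is applied. Granting the conventions fixed in the two refined theorems above, this is automatic, and nothing further is required. I would close with the remark that both formulas depend only on the codimension $j$ of $\overline{\{y\}}$ in $X$ and not on $\dim X$, reflecting the fact, exploited in the $HN$ computation, that each $\Omega^{n}_{O_{X,y}/\mathbb{Q}}$ has depth equal to $\dim O_{X,y}=j$.
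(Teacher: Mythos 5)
Your proposal is correct and is essentially the paper's own argument: the corollary is obtained by combining the Goodwillie-type identification $K_{n}(X[\varepsilon] \ on \ Y[\varepsilon],\varepsilon)=HN_{n}(X[\varepsilon] \ on \ Y[\varepsilon],\varepsilon)$ (and its Adams-graded refinement) with the explicit computations of $HN_{m}(O_{X,y}[\varepsilon] \ on \ y[\varepsilon],\varepsilon)$ and $HN^{(i)}_{m}(O_{X,y}[\varepsilon] \ on \ y[\varepsilon],\varepsilon)$ established in the two preceding theorems. Your extra remark about matching the $\psi^{k}-k^{i}$ versus $\psi^{k}-k^{i+1}$ conventions is a sensible check that the paper leaves implicit, but it does not change the route.
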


\section{Main results}
\label{Main results}

The aim of this section is to prove the existence of formal tangent maps from Bloch-Gersten-Quillen sequence to Cousin resolution and also to prove the 
tangent sequence to Bloch-Gersten-Quillen sequence is Cousin resolution.  In other word,
we will see ``arrows" from the  Bloch-Gersten-Quillen sequence to the Cousin resolution in a functorial way. We shall show that the formal tangent maps can be obtained as compositions 
of the Chern character and  natural projections. 

Now suppose $X$ is a smooth projective  $n$-dimensional variety over a field $k$, $chark=0$. We state the following definition firstly which is the answer to Green-Griffiths' question in section 2:

\begin{definition}

[7]

Let $T_{j}$ denote $Spec(k[t]/(t^{j+1})$ and $X_{j}$ denote the $j$-th infinitesimal thickening, $X \times T_{j}$, the Bloch-Quillen-Gersten sequence $\mathcal{G}_j$ on $X_{j}$ is defined to be the following flasque resolution($m$ can be any integer):
{\footnotesize
\begin{align*}
0 \to & K_{m}(O_{X_{j}}) \to K_{m}(k(X)_{j}) \to \bigoplus_{x_{j} \in X_{j} ^{(1)}}\underline{K}_{m-1}(O_{X_{j},x_{j}} \ on \ x_{j}) \to \dots \\
 & \dots \to \bigoplus_{x_{j} \in X_{j} ^{(n)}}\underline{K}_{m-n}(O_{X_{j},x_{j}} \ on \ x_{j}) \to 0.
\end{align*}
}
where $O_{X_{j}}=O_{X\times T_{j}}$,  $k(X)_{j}= k(X)\times T_{j}$, $x_{j}=x \times T_{j}$. $\underline{K}_{p}(O_{X_{j},x_{j}} \ on \ x_{j})$ is the flasque sheaf $j_{x_{j}\ast}K_{p}(O_{X_{j},x_{j}} \ on \ x_{j})$, where 
$j_{x_{j}}$ is the immersion $\{x_{j}\} \to X$.
\end{definition}

\begin{proof}
The existence of the sequence $\mathcal{G}_j$ follows from theorem 3.6 on page 8. The fact of falsque resolution follows from corollary 3.12.
\end{proof}

We will show
\begin{theorem}
 There exists the following commutative diagram($m$ can be any integer, each column is a flasque resolution, $Pr_{i}$ 's are natural projections):
{\scriptsize
\[
  \begin{CD}
     0 @. 0 @. 0\\
     @VVV @VVV @VVV\\
     \Omega_{O_{X}/ \mathbb{Q}}^{\bullet} @<Pr_{1}<< HN_{m}(O_{X[\varepsilon]}) @<Chern<< K_{m}(O_{X}[\varepsilon]) \\
     @VVV @VVV @VVV\\
     \Omega_{k(X)/ \mathbb{Q}}^{\bullet} @<Pr_{2}<<  HN_{m}(k(X)[\varepsilon]) @<Chern<< K_{m}(k(X)[\varepsilon]) \\
     @VVV @VVV @VVV\\
     \oplus_{d \in X^{(1)}}\underline{H}_{d}^{1}(\Omega_{O_{X}/\mathbb{Q}}^{\bullet}) @<Pr_{3}<< \oplus_{d[\varepsilon]\in X[\varepsilon]^{(1)}}\underline{HN}_{m-1}(O_{X,d}[\varepsilon] \ on \ d[\varepsilon]) @<Chern<<  \oplus_{d[\varepsilon]\in X[\varepsilon]^{(1)}}\underline{K}_{m-1}(O_{X,d}[\varepsilon] \ on \ d[\varepsilon])\\
     @VVV @VVV @VVV\\
     \oplus_{y \in X^{(2)}}\underline{H}_{y}^{2}(\Omega_{O_{X}/ \mathbb{Q}}^{\bullet}) @<Pr_{4}<< \oplus_{y[\varepsilon] \in X[\varepsilon]^{(2)}}\underline{HN}_{m-2}(O_{X,y}[\varepsilon] \ on \ y[\varepsilon]) @<Chern<< \oplus_{y[\varepsilon] \in X[\varepsilon]^{(2)}}\underline{K}_{m-2}(O_{X,y}[\varepsilon] \ on \ y[\varepsilon]) \\
     @VVV @VVV @VVV\\
      \dots @<Pr<< \dots @<Chern<< \dots \\ 
     @VVV @VVV @VVV\\
     \oplus_{x\in X^{(n)}}\underline{H}_{x}^{n}(\Omega_{O_{X}/ \mathbb{Q}}^{\bullet}) @<Pr_{n+2}<< \oplus_{x[\varepsilon]\in X[\varepsilon]^{(n)}}\underline{HN}_{m-n}(O_{X,x}[\varepsilon] \ on \ x[\varepsilon]) @<Chern<<  \oplus_{x[\varepsilon]\in X[\varepsilon]^{(n)}}\underline{K}_{m-n}(O_{X,x}[\varepsilon] \ on \ x[\varepsilon]) \\
     @VVV @VVV @VVV\\
      0 @. 0 @. 0
  \end{CD}
\]
}
where 
\begin{equation}
\begin{cases}
 \begin{CD}
 \Omega_{O_{X}/ \mathbb{Q}}^{\bullet} = \Omega^{m-1}_{O_{X}/\mathbb{Q}}\oplus \Omega^{m-3}_{O_{X}/\mathbb{Q}}\oplus \dots\\
  \Omega_{k(X)/ \mathbb{Q}}^{\bullet} = \Omega^{m-1}_{k(X)/\mathbb{Q}}\oplus \Omega^{m-3}_{k(X)/\mathbb{Q}}\oplus \dots
 \end{CD}
\end{cases}
\end{equation}

\end{theorem}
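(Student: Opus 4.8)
The plan is to assemble this diagram from the pieces already established in Sections 3 and 4, reducing everything to Corollary 3.13, Theorem 4.3.8, and Corollary 4.3.9. First I would recall that Corollary 3.13 already furnishes the entire right-hand square-column: the Chern character induces a morphism of exact sheafified Gersten sequences
\[
\big(\mathcal{G}_1(\mathcal{K})\big)\longleftarrow\big(\mathcal{G}_1(\mathcal{HN})\big),
\]
both flasque resolutions, where $\mathcal{G}_1(\mathcal{K})$ is the Bloch--Gersten--Quillen sequence for $O_{X[\varepsilon]}$ and $\mathcal{G}_1(\mathcal{HN})$ is the analogous $HN$-sequence. Thus the two right columns of the asserted diagram, together with the horizontal $Chern$ maps between them, are already in hand. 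What remains is (i) to identify the left column as the Cousin complex of $\Omega^{\bullet}_{O_X/\mathbb{Q}}$ and (ii) to produce the horizontal projection maps $Pr_i$ making the left square commute, each column exact.

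For step (i), the key inputs are Theorem 4.3.10 and its refinement, which compute, for $y\in X^{(j)}$,
\[
HN_m(O_{X,y}[\varepsilon]\ on\ y[\varepsilon],\varepsilon)=H_y^{j}\big(\Omega^{\bullet}_{O_{X,y}/\mathbb{Q}}\big),
\qquad
\Omega^{\bullet}_{O_{X,y}/\mathbb{Q}}=\Omega^{m+j-1}_{O_{X,y}/\mathbb{Q}}\oplus\Omega^{m+j-3}_{O_{X,y}/\mathbb{Q}}\oplus\cdots
\]
Combined with Corollary 4.1.4 at the generic point and the analogous computation of $HN_m(k(X)[\varepsilon],\varepsilon)=\Omega^{m-1}_{k(X)/\mathbb{Q}}\oplus\Omega^{m-3}_{k(X)/\mathbb{Q}}\oplus\cdots$, these identifications term-by-term match the $E_1$-page of the coniveau spectral sequence for the relative theory $\mathcal{HN}(O,\varepsilon)$ with the $E_1$-page of the Cousin complex of the complex of sheaves $\Omega^{\bullet}_{O_X/\mathbb{Q}}$. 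Since the Cousin complex of a coherent (more precisely, a direct limit of free) sheaf on a smooth variety is its flasque Cohen--Macaulay resolution, and since by Corollary 3.12 the relative $HN$-theory is effaceable (it is the fibre of two effaceable functors, and the fibre of effaceable functors is effaceable), the sheafified relative $HN$-Gersten sequence is exact and equals the Cousin resolution of $\Omega^{\bullet}_{O_X/\mathbb{Q}}$. This gives the left column.

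For step (ii), the projections $Pr_i$ come from splitting off the relative part: for each codimension $j$ there is a short exact sequence
\[
0\to HN_{m-j}(O_{X,y}[\varepsilon]\ on\ y[\varepsilon],\varepsilon)\to HN_{m-j}(O_{X,y}[\varepsilon]\ on\ y[\varepsilon])\xrightarrow{\varepsilon=0} HN_{m-j}(O_{X,y}\ on\ y)\to 0,
\]
and $Pr_i$ is the projection onto the relative summand. These are functorial in $U$, hence sheafify, and they commute with the Gersten differentials because the latter are natural transformations of functors on $Sch^{op}/k$; so the left square commutes and the $Pr_i$ assemble into a morphism of complexes. Finally, the composite $Pr_i\circ Chern$ is the desired formal tangent map, landing in the Cousin resolution by step (i).

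The main obstacle I expect is step (i): verifying that the term-by-term identifications of Theorem 4.3.10 are genuinely compatible with the Gersten differentials — i.e. that the abstract coniveau differential on the relative $\mathcal{HN}$-spectral sequence corresponds, under these isomorphisms, to the classical residue/boundary map of the Cousin complex of $\Omega^{\bullet}_{O_X/\mathbb{Q}}$. The effacement machinery of Section 3 guarantees exactness of the sheafified sequence, so the complexes have the same cohomology; but pinning down that they are isomorphic \emph{as complexes} (not merely quasi-isomorphic) requires tracing the boundary maps through the depth/local-cohomology spectral sequence used in the proof of Theorem 4.3.10. Everything else — the right column, the Chern maps, the naturality of $Pr_i$ — is formal given the results already quoted.
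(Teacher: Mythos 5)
Your proposal follows essentially the same route as the paper: the right-hand square is the Chern-character morphism between the two sheafified Gersten sequences over $X[\varepsilon]$ (the paper's Corollary 3.16), and the left-hand square is the paper's Lemma 5.3, obtained exactly as you describe by splitting off the relative part of $HN$ and identifying it with the Cousin complex of $\Omega^{\bullet}_{O_X/\mathbb{Q}}$ via the computation $HN_{m-j}(O_{X,y}[\varepsilon]\ on\ y[\varepsilon],\varepsilon)=H^{j}_{y}(\Omega^{\bullet}_{O_{X,y}/\mathbb{Q}})$. The compatibility-of-differentials issue you flag at the end is a legitimate concern, but it is equally left implicit in the paper's own (very terse) proof of that lemma, so your argument does not omit anything the paper actually supplies.
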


We will show the following lemma first.
\begin{lemma}
 There exists the following commutative splitting diagram($m$ can be any integer, each column is a flasque resolution):

{\footnotesize
\[
  \begin{CD}
     0 @. 0 @. 0\\
     @VVV @VVV @VVV\\
     \Omega_{O_{X}/ \mathbb{Q}}^{\bullet} @<Pr_{1}<< HN_{m}(O_{X[\varepsilon]}) @<<< HN_{m}(O_{X}) \\
     @VVV @VVV @VVV\\
     \Omega_{k(X)/ \mathbb{Q}}^{\bullet} @<Pr_{2}<<  HN_{m}(k(X)[\varepsilon]) @<<< HN_{m}(k(X)) \\
     @VVV @VVV @VVV\\
     \oplus_{d \in X^{(1)}}\underline{H}_{d}^{1}(\Omega_{O_{X}/\mathbb{Q}}^{\bullet}) @<Pr_{3}<< \oplus_{d[\varepsilon]\in X[\varepsilon]^{(1)}}\underline{HN}_{m-1}(O_{X,d}[\varepsilon] \ on \ d[\varepsilon]) @<<<  \oplus_{d \in X^{(1)}}\underline{HN}_{m-1}(O_{X,d} \ on \ d)\\
     @VVV @VVV @VVV\\
     \oplus_{y \in X^{(2)}}\underline{H}_{y}^{2}(\Omega_{O_{X}/ \mathbb{Q}}^{\bullet}) @<Pr_{4}<< \oplus_{y[\varepsilon] \in X[\varepsilon]^{(2)}}\underline{HN}_{m-2}(O_{X,y}[\varepsilon] \ on \ y[\varepsilon]) @<<< \oplus_{y \in X^{(2)}}\underline{HN}_{m-2}(O_{X,y} \ on \ y) \\
     @VVV @VVV @VVV\\
      \dots @<Pr<< \dots @<<< \dots \\ 
     @VVV @VVV @VVV\\
     \oplus_{x\in X^{(n)}}\underline{H}_{x}^{n}(\Omega_{O_{X}/ \mathbb{Q}}^{\bullet}) @<Pr_{n+2}<< \oplus_{x[\varepsilon]\in X[\varepsilon]^{(n)}}\underline{HN}_{m-n}(O_{X,x}[\varepsilon] \ on \ x[\varepsilon]) @<<<  \oplus_{x \in X^{(n)}}\underline{HN}_{m-n}(O_{X,x} \ on \ x) \\
     @VVV @VVV @VVV\\
      0 @. 0 @. 0
  \end{CD}
\]
}
where 
\begin{equation}
\begin{cases}
 \begin{CD}
 \Omega_{O_{X}/ \mathbb{Q}}^{\bullet} = \Omega^{m-1}_{O_{X}/\mathbb{Q}}\oplus \Omega^{m-3}_{O_{X}/\mathbb{Q}}\oplus \dots\\
  \Omega_{k(X)/ \mathbb{Q}}^{\bullet} = \Omega^{m-1}_{k(X)/\mathbb{Q}}\oplus \Omega^{m-3}_{k(X)/\mathbb{Q}}\oplus \dots
 \end{CD}
\end{cases}
\end{equation}
\end{lemma}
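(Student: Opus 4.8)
The plan is to build the three columns separately and then recognize the left-hand column as the direct summand of the middle column complementary to the right one. The key structural input is that the augmentation $\varepsilon=0$ and its section $k\hookrightarrow k[\varepsilon]$ — whose composite $k\to k[\varepsilon]\xrightarrow{\varepsilon=0}k$ is the identity — induce natural transformations $\mathcal{HN}\to\mathcal{HN}_{k[\varepsilon]}$ and $\mathcal{HN}_{k[\varepsilon]}\to\mathcal{HN}$ of functors on $\operatorname{Sch}^{op}/k$ whose composite is the identity, where $\mathcal{HN}_{k[\varepsilon]}(X)=\mathcal{HN}(O_X[\varepsilon])$. Applying the functorial coniveau (Gersten) construction turns this into a degreewise-split pair of maps of complexes of sheaves, so that the $\varepsilon$-thickened Gersten complex splits, as a complex, as the original Gersten complex plus a complementary complex $C^{\bullet}$.

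First I would check that the right and middle columns are exact flasque resolutions. Both $\mathcal{HN}$ and $\mathcal{HN}_{k[\varepsilon]}=\mathcal{HN}_T$ with $T=\operatorname{Spec}k[\varepsilon]$ satisfy \'etale excision and the projective bundle formula for $\mathbb P^1$ — the former by Theorem 3.5, the latter by the universal-exactness corollary (Corollary 3.12) — so the effacement theorem (Theorem 3.8) shows that their sheafified Gersten complexes on the smooth variety $X$ are exact flasque resolutions of the sheaves $HN_m(O_X)$ and $HN_m(O_{X[\varepsilon]})$, with codimension-$p$ terms $\bigoplus_{x\in X^{(p)}}\underline{HN}_{m-p}(O_{X,x}\text{ on }x)$ and $\bigoplus_{x[\varepsilon]\in X[\varepsilon]^{(p)}}\underline{HN}_{m-p}(O_{X,x}[\varepsilon]\text{ on }x[\varepsilon])$ respectively (recall $X$ and $X[\varepsilon]$ have the same underlying space, and each such term is a $j_{x\ast}$ of abelian groups, hence flasque). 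The complementary complex $C^{\bullet}$ is the sheafified Gersten complex of the relative functor $\mathcal{HN}(\mathcal O,\varepsilon)=\operatorname{fib}(\mathcal{HN}_{k[\varepsilon]}\to\mathcal{HN})$, which is again effaceable because it is a homotopy fibre of two functors satisfying \'etale excision and the projective bundle formula; hence $C^{\bullet}$ too is an exact flasque resolution of its $H^0$. (Equivalently: a direct summand of an exact complex of flasque sheaves complementary to a flasque subcomplex is automatically exact and flasque.)

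It then remains to identify $C^{\bullet}$ with the left column. Its $H^0$ is $HN_m(O_{X[\varepsilon]})/HN_m(O_X)=HN_m(O_X[\varepsilon],\varepsilon)$, which by the sheafified relative negative-cyclic computation (Theorem 4.1.5) equals $\Omega^\bullet_{O_X/\mathbb Q}=\Omega^{m-1}_{O_X/\mathbb Q}\oplus\Omega^{m-3}_{O_X/\mathbb Q}\oplus\cdots$; the stalk form of this (Corollary 4.1.4, giving $HN_n(R[\varepsilon],\varepsilon)=\Omega^{n-1}_{R/\mathbb Q}\oplus\Omega^{n-3}_{R/\mathbb Q}\oplus\cdots$ for $R$ regular) applied to $R=k(X)$ identifies the generic stalk of $C^{\bullet}$, namely $HN_m(k(X)[\varepsilon],\varepsilon)$, with $\Omega^\bullet_{k(X)/\mathbb Q}$. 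For $x\in X^{(p)}$ the codimension-$p$ summand $\underline{HN}_{m-p}(O_{X,x}[\varepsilon]\text{ on }x[\varepsilon],\varepsilon)$ of $C^{\bullet}$ is, by the local computation of relative negative cyclic homology with support (Theorem 4.3.8, with its ``$m$'' replaced by $m-p$ and its ``$j$'' by $p$, so that the top degree $(m-p)+p-1$ is $m-1$ and the answer no longer depends on $p$), equal to $\underline H^p_x(\Omega^\bullet_{O_X/\mathbb Q})$ for the \emph{same} graded sheaf $\Omega^\bullet_{O_X/\mathbb Q}=\Omega^{m-1}_{O_X/\mathbb Q}\oplus\Omega^{m-3}_{O_X/\mathbb Q}\oplus\cdots$. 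Hence $C^{\bullet}$ is exactly the left column displayed in the lemma, and, being exact with cohomology concentrated in degree $0$ and equal there to $\Omega^\bullet_{O_X/\mathbb Q}$, it is the Cousin flasque resolution of $\Omega^\bullet_{O_X/\mathbb Q}$; the maps $Pr_i$ are the projections of the split decomposition onto $C^{\bullet}$, and they commute with the differentials because the splitting does, while the right-hand squares commute by construction.

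The main obstacle is structural rather than computational: one must carry out the augmentation/section splitting \emph{at the level of the coniveau complexes}, so that the middle column is the direct sum of the right column and $C^{\bullet}$ \emph{as complexes}, not merely in each degree. This rests on the functoriality of the filtration by codimension of support in the coefficient functor together with the identity $(\varepsilon=0)\circ(\text{section})=\operatorname{id}$; working with the effaceable relative functor $\mathcal{HN}(\mathcal O,\varepsilon)$, rather than trying to construct the left column by hand, is what makes this clean. A secondary point is to make sure the negative-cyclic computations of Section 4 genuinely apply to the function field $k(X)$ and to the regular local rings $O_{X,x}$ — which are only essentially of finite type over $k$ — and to observe the cancellation by which the index shift produces the \emph{same} graded sheaf $\Omega^{m-1}\oplus\Omega^{m-3}\oplus\cdots$ in every codimension; this is precisely what makes the left column a bona fide Cousin complex of a single sheaf.
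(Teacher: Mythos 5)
Your architecture is essentially the one the paper intends: split the middle column off the right column using the section $k\hookrightarrow k[\varepsilon]$ of the augmentation, observe that the complementary summand is the coniveau complex of the relative functor $\mathcal{HN}(\mathcal O,\varepsilon)$, check it is effaceable (or argue by complementary summands), and identify its terms with the Cousin complex of $\Omega^{\bullet}_{O_X/\mathbb Q}$ via the local computation $HN_{m-p}(O_{X,x}[\varepsilon]\ on\ x[\varepsilon],\varepsilon)=H^p_x(\Omega^{\bullet}_{O_{X,x}/\mathbb Q})$ (Theorem 4.13), including the correct observation that the shift $(m-p)+p-1=m-1$ makes the graded sheaf independent of the codimension. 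All of that is right and considerably more complete than what the paper writes down.

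There is, however, one point you pass over, and it happens to be the \emph{entire} content of the paper's own proof of this lemma. The codimension-$p$ terms of the coniveau complex are by definition direct limits $\varinjlim_{x\in U}\mathcal{HN}^{\ast}(U[\varepsilon]\ on\ (\overline{\{x\}}\cap U)[\varepsilon])$, whereas Theorem 4.13 computes the group attached to the local ring $O_{X,x}$ itself via a local-cohomology spectral sequence. The remark following Theorem 3.14 warns explicitly that these need not agree, because negative cyclic homology does not commute with filtered colimits. The paper's proof consists precisely of the observation that for the \emph{relative} groups one has $HN_{p}(\,\cdot\,[\varepsilon]\ on\ \cdot\,,\varepsilon)\cong HC_{p-1}(\,\cdot\,[\varepsilon]\ on\ \cdot\,,\varepsilon)$, and ordinary cyclic homology does commute with filtered colimits, so the colimit-defined terms of your complex $C^{\bullet}$ coincide with the local-ring groups to which Theorem 4.13 applies. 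Your closing worry about $O_{X,x}$ and $k(X)$ being only essentially of finite type is adjacent to, but not the same as, this issue; without the $HN(\,\cdot\,,\varepsilon)\cong HC(\,\cdot\,,\varepsilon)[-1]$ step your application of Theorem 4.13 to the terms of $C^{\bullet}$ is not yet justified. With that one sentence added, your argument is a correct and much fuller version of the paper's.
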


\begin{proof}
Since $HN_{p}(O_{X,x}[\varepsilon] \ on \ x[\varepsilon], \varepsilon)= HC_{p-1}(O_{X,x}[\varepsilon] \ on \ x[\varepsilon], \varepsilon)$, the latter relative group agrees with that of [5, example 2.7].  
So one can use theorem 4.13. Thanks for Schlichting for pointing out this.
\end{proof}

Now we give a proof of theorem 5.2.
\begin{proof}
(Proof of theorem 5.2)

The above result tells us there are natural projections from Gersten sequence involving negative cyclic homology to Cousin resolution of $\Omega_{O_{X}/ \mathbb{Q}}^{\bullet}$:
{\small
\[
  \begin{CD}
     0 @. 0 \\
     @VVV @VVV \\
     \Omega_{O_{X}/ \mathbb{Q}}^{\bullet} @<Pr_{1}<< HN_{m}(O_{X[\varepsilon]})  \\
     @VVV @VVV \\
     \Omega_{k(X)/ \mathbb{Q}}^{\bullet} @<Pr_{2}<<  HN_{m}(k(X)[\varepsilon]) \\
     @VVV @VVV \\
     \oplus_{d \in X^{(1)}}\underline{H}_{d}^{1}(\Omega_{O_{X}/\mathbb{Q}}^{\bullet}) @<Pr_{3}<< \oplus_{d[\varepsilon]\in X[\varepsilon]^{(1)}}\underline{HN}_{m-1}(O_{X,d}[\varepsilon] \ on \ d[\varepsilon])\\
     @VVV @VVV \\
     \oplus_{y \in X^{(2)}}\underline{H}_{y}^{2}(\Omega_{O_{X}/ \mathbb{Q}}^{\bullet}) @<Pr_{4}<< \oplus_{y[\varepsilon] \in X[\varepsilon]^{(2)}}\underline{HN}_{m-2}(O_{X,y}[\varepsilon] \ on \ y[\varepsilon]) \\
     @VVV @VVV \\
      \dots @<Pr<< \dots  \\ 
     @VVV @VVV \\
     \oplus_{x\in X^{(n)}}\underline{H}_{x}^{n}(\Omega_{O_{X}/ \mathbb{Q}}^{\bullet}) @<Pr_{n+2}<< \oplus_{x[\varepsilon]\in X[\varepsilon]^{(n)}}\underline{HN}_{m-n}(O_{X,x}[\varepsilon] \ on \ x[\varepsilon])  \\
     @VVV @VVV \\
      0 @. 0 
  \end{CD}
\]
}

We have shown the following commutative diagram induced by Chern character in corollary 3.16:

\[
  \begin{CD}
     0 @. 0 \\
     @VVV @VVV \\
      HN_{m}(O_{X[\varepsilon]}) @<Chern<< K_{m}(O_{X}[\varepsilon]) \\
     @VVV @VVV\\
     HN_{m}(k(X)[\varepsilon]) @<Chern<< K_{m}(k(X)[\varepsilon]) \\
     @VVV @VVV\\
     \oplus_{d[\varepsilon]\in X[\varepsilon]^{(1)}}\underline{HN}_{m-1}(O_{X,d}[\varepsilon] \ on \ d[\varepsilon]) @<Chern<<  \oplus_{d[\varepsilon]\in X[\varepsilon]^{(1)}}\underline{K}_{m-1}(O_{X,d}[\varepsilon] \ on \ d[\varepsilon])\\
     @VVV @VVV\\
      \oplus_{y[\varepsilon] \in X[\varepsilon]^{(2)}}\underline{HN}_{m-2}(O_{X,y}[\varepsilon] \ on \ y[\varepsilon]) @<Chern<< \oplus_{y[\varepsilon] \in X[\varepsilon]^{(2)}}\underline{K}_{m-2}(O_{X,y}[\varepsilon] \ on \ y[\varepsilon]) \\
    @VVV @VVV\\
      \dots @<Chern<< \dots \\ 
    @VVV @VVV\\
      \oplus_{x[\varepsilon]\in X[\varepsilon]^{(n)}}\underline{HN}_{m-n}(O_{X,x}[\varepsilon] \ on \ x[\varepsilon]) @<Chern<<  \oplus_{x[\varepsilon]\in X[\varepsilon]^{(n)}}\underline{K}_{m-n}(O_{X,x}[\varepsilon] \ on \ x[\varepsilon]) \\
      @VVV @VVV\\
      0 @. 0
  \end{CD}
\]

Combining the above two commutative diagrams, we see there exists the following commutative diagrams.
{\scriptsize
\[
  \begin{CD}
     0 @. 0 @. 0\\
     @VVV @VVV @VVV\\
     \Omega_{O_{X}/ \mathbb{Q}}^{\bullet} @<Pr_{1}<< HN_{m}(O_{X[\varepsilon]}) @<Chern<< K_{m}(O_{X}[\varepsilon]) \\
     @VVV @VVV @VVV\\
     \Omega_{k(X)/ \mathbb{Q}}^{\bullet} @<Pr_{2}<<  HN_{m}(k(X)[\varepsilon]) @<Chern<< K_{m}(k(X)[\varepsilon]) \\
     @VVV @VVV @VVV\\
     \oplus_{d \in X^{(1)}}\underline{H}_{d}^{1}(\Omega_{O_{X}/\mathbb{Q}}^{\bullet}) @<Pr_{3}<< \oplus_{d[\varepsilon]\in X[\varepsilon]^{(1)}}\underline{HN}_{m-1}(O_{X,d}[\varepsilon] \ on \ d[\varepsilon]) @<Chern<<  \oplus_{d[\varepsilon]\in X[\varepsilon]^{(1)}}\underline{K}_{m-1}(O_{X,d}[\varepsilon] \ on \ d[\varepsilon])\\
     @VVV @VVV @VVV\\
     \oplus_{y \in X^{(2)}}\underline{H}_{y}^{2}(\Omega_{O_{X}/ \mathbb{Q}}^{\bullet}) @<Pr_{4}<< \oplus_{y[\varepsilon] \in X[\varepsilon]^{(2)}}\underline{HN}_{m-2}(O_{X,y}[\varepsilon] \ on \ y[\varepsilon]) @<Chern<< \oplus_{y[\varepsilon] \in X[\varepsilon]^{(2)}}\underline{K}_{m-2}(O_{X,y}[\varepsilon] \ on \ y[\varepsilon]) \\
     @VVV @VVV @VVV\\
      \dots @<Pr<< \dots @<Chern<< \dots \\ 
     @VVV @VVV @VVV\\
     \oplus_{x\in X^{(n)}}\underline{H}_{x}^{n}(\Omega_{O_{X}/ \mathbb{Q}}^{\bullet}) @<Pr_{n+2}<< \oplus_{x[\varepsilon]\in X[\varepsilon]^{(n)}}\underline{HN}_{m-n}(O_{X,x}[\varepsilon] \ on \ x[\varepsilon]) @<Chern<<  \oplus_{x[\varepsilon]\in X[\varepsilon]^{(n)}}\underline{K}_{m-n}(O_{X,x}[\varepsilon] \ on \ x[\varepsilon]) \\
     @VVV @VVV @VVV\\
      0 @. 0 @. 0
  \end{CD}
\]
}
\end{proof}

\begin{definition}
 The formal tangent maps from the  Bloch-Gersten-Quillen 
sequence to the Cousin resolution are defined as compositions of Chern character and natural projections as above.
\end{definition}

\begin{corollary}
There exists formal tangent maps from the  Bloch-Gersten-Quillen sequence to the Cousin resolution:
\[
  \begin{CD}
     0 @.  0\\
     @VVV  @VVV\\
     \Omega_{O_{X}/ \mathbb{Q}}^{\bullet}  @<tan1<< K_{m}(O_{X}[\varepsilon]) \\
     @VVV @VVV \\
     \Omega_{k(X)/ \mathbb{Q}}^{\bullet}  @<tan2<< K_{m}(k(X)[\varepsilon]) \\
     @VVV @VVV \\
     \oplus_{d \in X^{(1)}}\underline{H}_{d}^{1}(\Omega_{O_{X}/\mathbb{Q}}^{\bullet})  @<tan3<<  \oplus_{d[\varepsilon]\in X[\varepsilon]^{(1)}}\underline{K}_{m-1}(O_{X,d}[\varepsilon] \ on \ d[\varepsilon])\\
     @VVV @VVV \\
     \oplus_{y \in X^{(2)}}\underline{H}_{y}^{2}(\Omega_{O_{X}/ \mathbb{Q}}^{\bullet})  @<tan4<< \oplus_{y[\varepsilon] \in X[\varepsilon]^{(2)}}\underline{K}_{m-2}(O_{X,y}[\varepsilon] \ on \ y[\varepsilon]) \\
     @VVV @VVV \\
      \dots @<tan<< \dots \\ 
     @VVV @VVV\\
     \oplus_{x\in X^{(n)}}\underline{H}_{x}^{n}(\Omega_{O_{X}/ \mathbb{Q}}^{\bullet})  @<tan(n+2)<< \oplus_{x[\varepsilon]\in X[\varepsilon]^{(n)}}\underline{K}_{m-n}(O_{X,x}[\varepsilon] \ on \ x[\varepsilon]) \\
     @VVV @VVV \\
      0 @. 0 
  \end{CD}
\]
where $tani$ is defined as $Pr_{i}\circ Ch$.
\end{corollary}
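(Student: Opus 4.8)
The plan is to read this corollary off from Theorem 5.2 by deleting its middle column. Theorem 5.2 already produces a commutative diagram with three columns, each a flasque resolution: on the right the sheafified Bloch--Gersten--Quillen sequence on $X[\varepsilon]$, in the middle the analogous Gersten sequence assembled from negative cyclic homology, and on the left the Cousin resolution of $\Omega^{\bullet}_{O_X/\mathbb{Q}}$. Its horizontal arrows are the Chern character $Ch$ (supplied by Corollary 3.16) followed by the natural projections $Pr_i$ (supplied by Lemma 5.3). I would therefore set $\tan i := Pr_i \circ Ch$ and take the outer rectangle. Commutativity of each square of the resulting two-column diagram is the pasting of the two commutative squares of Theorem 5.2 along the middle column, and the vertical composite is a chain map because both $Ch$ and each $Pr_i$ are morphisms of the respective Gersten complexes, so their composite is one too. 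The flasque-resolution property of the two surviving columns is already part of Theorem 5.2 on the left, and of Definition 5.1 together with the universal-exactness Corollary 3.12 on the right.

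The one point worth making explicit is that $\tan i$ genuinely computes a ``formal tangent'', i.e.\ that it factors through the relative theory $K_{\bullet}(-[\varepsilon]\ \mathrm{on}\ -[\varepsilon],\varepsilon)$ and lands in the Cousin resolution. For this I would use the splitting $K_m(O_X[\varepsilon]) \cong K_m(O_X)\oplus K_m(O_X[\varepsilon],\varepsilon)$, together with its counterpart at each term of the Gersten sequence, coming from the retraction $O_X[\varepsilon]\to O_X$ and $\varepsilon$-excision for the groups with support; then the relative Chern character identifies the relative summand $K_{m-j}(O_{X,x}[\varepsilon]\ \mathrm{on}\ x[\varepsilon],\varepsilon)$ with $HN_{m-j}(O_{X,x}[\varepsilon]\ \mathrm{on}\ x[\varepsilon],\varepsilon)$ (the Goodwillie-type theorem for relative K-groups with support of section 4), and the latter with $H_x^{j}(\Omega^{\bullet}_{O_{X,x}/\mathbb{Q}})$. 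Since $Pr_i$ is precisely projection onto this relative summand, $\tan i$ annihilates the image of $K_{\bullet}(O_X)$ and lands where claimed.

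Accordingly, the real content lies upstream, and the step I expect to be the main obstacle is the naturality already invested in Lemma 5.3: one needs the termwise identifications $HN_{m-j}(O_{X,x}[\varepsilon]\ \mathrm{on}\ x[\varepsilon],\varepsilon)\cong H_x^{j}(\Omega^{\bullet})$ to be compatible with the Gersten differentials, so that the $Pr_i$ assemble into a genuine chain map onto the Cousin complex rather than a mere list of isomorphisms. This is exactly where the effacement theorem and universal exactness are used, combined with the fact that the Chern character is a morphism of coniveau spectral sequences. I also expect the subtlety flagged in the proof of Lemma 5.3 --- that negative cyclic homology does not commute with the filtered colimits defining the coniveau $E_1$-terms, so that one passes through the $HC$ reformulation in order that the relative groups agree with those of [5, example 2.7] --- to be the one delicate point, and I would handle it exactly as the paper does.
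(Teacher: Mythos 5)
Your proposal matches the paper's own route: the corollary is obtained exactly by composing the Chern character diagram of Corollary 3.16 with the projection diagram of Lemma 5.3 (i.e.\ pasting the squares of Theorem 5.2 and taking the outer rectangle), with $tani := Pr_i\circ Ch$ as in Definition 5.4. Your additional remarks on factoring through the relative summand and on the colimit subtlety in Lemma 5.3 are consistent elaborations of what the paper already invests upstream, so there is nothing further to add.
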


Combining the above diagram with results  on computing relative K-groups with support,theorem 4.15, we get the following theorem which says that 
the formal tangent sequence to the Bloch-Gersten-Quillen 
sequence is the Cousin resolution.
\begin{theorem}
 The formal tangent sequence to the Bloch-Gersten-Quillen 
sequence is the Cousin resolution. That is there exists the following splitting commutative diagram:
{\footnotesize
\[
  \begin{CD}
     0 @. 0 @. 0\\
     @VVV @VVV @VVV\\
     \Omega_{O_{X}/ \mathbb{Q}}^{\bullet} @<tan1<< K_{m}(O_{X[\varepsilon]}) @<<< K_{m}(O_{X}) \\
     @VVV @VVV @VVV\\
     \Omega_{k(X)/ \mathbb{Q}}^{\bullet} @<tan2<<  K_{m}(k(X)[\varepsilon]) @<<< K_{m}(k(X)) \\
     @VVV @VVV @VVV\\
     \oplus_{d \in X^{(1)}}\underline{H}_{d}^{1}(\Omega_{O_{X}/\mathbb{Q}}^{\bullet}) @<tan3<< \oplus_{d[\varepsilon]\in X[\varepsilon]^{(1)}}\underline{K}_{m-1}(O_{X,d}[\varepsilon] \ on \ d[\varepsilon]) @<<<  \oplus_{d \in X^{(1)}}\underline{K}_{m-1}(O_{X,d} \ on \ d)\\
     @VVV @VVV @VVV\\
     \oplus_{y \in X^{(2)}}\underline{H}_{y}^{2}(\Omega_{O_{X}/ \mathbb{Q}}^{\bullet}) @<tan4<< \oplus_{y[\varepsilon] \in X[\varepsilon]^{(2)}}\underline{K}_{m-2}(O_{X,y}[\varepsilon] \ on \ y[\varepsilon]) @<<< \oplus_{y \in X^{(2)}}\underline{K}_{m-2}(O_{X,y} \ on \ y) \\
     @VVV @VVV @VVV\\
      \dots @<tan<< \dots @<<< \dots \\ 
     @VVV @VVV @VVV\\
     \oplus_{x\in X^{(n)}}\underline{H}_{x}^{n}(\Omega_{O_{X}/ \mathbb{Q}}^{\bullet}) @<tan(n+2)<< \oplus_{x[\varepsilon]\in X[\varepsilon]^{(n)}}\underline{K}_{m-n}(O_{X,x}[\varepsilon] \ on \ x[\varepsilon]) @<<<  \oplus_{x \in X^{(n)}}\underline{K}_{m-n}(O_{X,x} \ on \ x) \\
     @VVV @VVV @VVV\\
      0 @. 0 @. 0
  \end{CD}
\]
}
where $tani$ is defined above.
\end{theorem}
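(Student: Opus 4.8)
The plan is to deduce Theorem 5.6 from Corollary 5.5 by adjoining the ordinary Bloch--Gersten--Quillen complex $\mathcal{G}_0$ on $X$ as a third column and then showing that every horizontal row is a split short exact sequence whose cokernel is the corresponding Cousin term. Corollary 5.5 already supplies the commutative diagram of flasque resolutions, with the tangent maps $tan_i = Pr_i\circ Ch$, running from $\mathcal{G}_1$ (the Bloch--Gersten--Quillen complex on $X[\varepsilon]$, a flasque resolution by the definition-theorem) to the Cousin resolution of $\Omega^{\bullet}_{O_X/\mathbb{Q}}$. The ring inclusion $O_X\hookrightarrow O_X[\varepsilon]$, i.e.\ pullback along the projection $X[\varepsilon]\to X$, is natural in the scheme, so it induces a morphism of coniveau complexes of flasque sheaves $\mathcal{G}_0\to\mathcal{G}_1$; since $X[\varepsilon]$ has the same underlying space as $X$ this matches $X^{(p)}$ with $X[\varepsilon]^{(p)}$ point by point, and the squares formed with the Gersten differentials of $\mathcal{G}_1$ commute by functoriality. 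As these flasque sheaves are pushforwards along inclusions of points, exactness of the sheaf rows reduces stalkwise to the ring-level statements below.

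Next I would check exactness of the rows one codimension at a time. Fix $p\ge 0$ and $x\in X^{(p)}$, and write $\iota\colon K_{m-p}(O_{X,x} \ on \ x)\to K_{m-p}(O_{X,x}[\varepsilon] \ on \ x[\varepsilon])$ for the corresponding component of $\mathcal{G}_0\to\mathcal{G}_1$. Setting $\varepsilon=0$ gives a retraction of $\iota$, so $\iota$ is split injective and its cokernel is the relative $K$-group with support $K_{m-p}(O_{X,x}[\varepsilon] \ on \ x[\varepsilon],\varepsilon)$; by Theorem 4.15, applied with ``$m$''$=m-p$ and ``$j$''$=p$, this cokernel is $\underline{H}^{p}_{x}(\Omega^{\bullet}_{O_X/\mathbb{Q}})$ with $\Omega^{\bullet}=\Omega^{m-1}_{O_X/\mathbb{Q}}\oplus\Omega^{m-3}_{O_X/\mathbb{Q}}\oplus\dots$, which is exactly the Cousin term. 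It then remains to identify $tan=Pr\circ Ch$ on this cokernel with the isomorphism of Theorem 4.15. Naturality of the Chern character gives $Ch\circ\iota=\iota_{HN}\circ Ch$, so $Ch$ carries the image of $\iota$ into the ``$\varepsilon=0$'' direct summand $HN_{m-p}(O_{X,x} \ on \ x)$ of $HN_{m-p}(O_{X,x}[\varepsilon] \ on \ x[\varepsilon])$, which is annihilated by the projection $Pr$ onto the relative summand (the splittings underlying Lemma 5.3); hence $tan\circ\iota=0$ and $tan$ factors through the cokernel. On the relative summand $Ch$ restricts to the relative Chern character with support, an isomorphism onto $HN_{m-p}(O_{X,x}[\varepsilon] \ on \ x[\varepsilon],\varepsilon)$ by Theorem 4.10, and $Pr$ restricted there is the identification $HN_{m-p}(O_{X,x}[\varepsilon] \ on \ x[\varepsilon],\varepsilon)=\underline{H}^{p}_{x}(\Omega^{\bullet}_{O_X/\mathbb{Q}})$ of Theorem 4.13. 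Thus the induced map $\overline{tan}$ is an isomorphism, so $0\to K_{m-p}(O_{X,x} \ on \ x)\xrightarrow{\iota} K_{m-p}(O_{X,x}[\varepsilon] \ on \ x[\varepsilon])\xrightarrow{tan}\underline{H}^{p}_{x}(\Omega^{\bullet})\to 0$ is exact and split. The same argument at the generic point gives the row with $K_m(k(X))$ and $\Omega^{\bullet}_{k(X)/\mathbb{Q}}$ (using Corollary 4.4 for $HN_m(k(X)[\varepsilon],\varepsilon)$), and the global-sections row gives the one with $K_m(O_X)$, $K_m(O_{X[\varepsilon]})$ and $\Omega^{\bullet}_{O_X/\mathbb{Q}}$.

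Finally I would assemble the diagram: all the splittings are induced by $\varepsilon=0$ and hence compatible with the natural Gersten differentials, so $\mathcal{G}_1$ decomposes as the direct sum of $\mathcal{G}_0$ and the relative complex, and the family of isomorphisms $\overline{tan}$ identifies the relative complex with the Cousin resolution of $\Omega^{\bullet}_{O_X/\mathbb{Q}}$, which is itself a flasque resolution of the sheaf $\Omega^{\bullet}_{O_X/\mathbb{Q}}$. Combined with Corollary 5.5 this produces the splitting commutative diagram asserted in Theorem 5.6, exhibiting the Cousin resolution of $TK_m(O_X)$ as $\ker(\mathcal{G}_1\to\mathcal{G}_0)$ and so answering Green--Griffiths' question for $X_1=X[\varepsilon]$; the case of a general thickening $X_j$ runs the same way once the relative group $K_{m-p}(O_{X,x}[t]/(t^{j+1}) \ on \ \cdots,(t))$ is computed as $j$ copies of the $[\varepsilon]$-answer.

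The step I expect to be the main obstacle is the middle one: pinning down that the abstract projection $Pr$ of Lemma 5.3 restricts, on the relative-with-support summand, to exactly the isomorphism onto $\underline{H}^{p}_{x}(\Omega^{\bullet}_{O_X/\mathbb{Q}})$ furnished by Theorem 4.13, and that the relative Chern character genuinely is an isomorphism with supports at the local rings $O_{X,x}$ (which are only essentially of finite type, and at the generic point lack the naive noetherian-finiteness one might want). Concretely this is the bookkeeping that ensures Theorem 4.10 and Theorem 4.13 are invoked for precisely these rings and with precisely the index shift making $\Omega^{(m-p)+p-1}=\Omega^{m-1}$ align across all codimensions $p$; once that is fixed, the rest is naturality of $Ch$ together with the splitting-lemma formalism.
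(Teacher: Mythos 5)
Your proposal is correct and follows essentially the same route as the paper: the paper's proof of this theorem is the single remark that one combines the diagram of Corollary 5.5 with the computation of the relative $K$-groups with support (Corollary 4.15), and your argument is precisely that combination, spelled out — the splitting via $\varepsilon=0$, the identification of the cokernel with $H^{p}_{x}(\Omega^{\bullet})$ via the index shift $m\mapsto m-p$, $j\mapsto p$, and the verification via naturality of $Ch$ and the relative Goodwillie isomorphism that $tan_i=Pr_i\circ Ch$ induces that identification. The delicate point you flag (that the direct-limit $HN$ with support at $O_{X,x}$ agrees with the relative group to which Theorem 4.13 applies) is exactly the issue the paper disposes of in the proof of Lemma 5.3 via $HN_{p}(-,\varepsilon)=HC_{p-1}(-,\varepsilon)$.
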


Based on theorem 4.15, we also have the following commutative diagram which roughly says Adams operations $\psi^{k}$ on K-theory can decompose the above diagram into eigen-components. We note that negative K-groups might appear. One can extend Adams' operations to negative range by using Weibel's method, recalled in section4.2. Let $K^{(i)}_{p}(O_{X,x} \ on \ x)$ denote the eigen-space of $\psi^{k}=k^{i}$.

\begin{theorem}
 There exists the following splitting commutative diagram:
{\footnotesize
\[
  \begin{CD}
     0 @. 0 @. 0\\
     @VVV @VVV @VVV\\
     \Omega_{O_{X}/ \mathbb{Q}}^{\bullet,(i)} @<tan1<< K^{(i)}_{m}(O_{X[\varepsilon]}) @<<< K^{(i)}_{m}(O_{X}) \\
     @VVV @VVV @VVV\\
     \Omega_{k(X)/ \mathbb{Q}}^{\bullet,(i)} @<tan2<<  K^{(i)}_{m}(k(X)[\varepsilon]) @<<< K^{(i)}_{m}(k(X)) \\
     @VVV @VVV @VVV\\
     \oplus_{d \in X^{(1)}}\underline{H}_{d}^{1}(\Omega_{O_{X}/\mathbb{Q}}^{\bullet,(i)}) @<tan3<< \oplus_{d[\varepsilon]\in X[\varepsilon]^{(1)}}\underline{K}^{(i)}_{m-1}(O_{X,d}[\varepsilon] \ on \ d[\varepsilon]) @<<<  \oplus_{d \in X^{(1)}}\underline{K}^{(i)}_{m-1}(O_{X,d} \ on \ d)\\
     @VVV @VVV @VVV\\
     \oplus_{y \in X^{(2)}}\underline{H}_{y}^{2}(\Omega_{O_{X}/ \mathbb{Q}}^{\bullet,(i)}) @<tan4<< \oplus_{y[\varepsilon] \in X[\varepsilon]^{(2)}}\underline{K}^{(i)}_{m-2}(O_{X,y}[\varepsilon] \ on \ y[\varepsilon]) @<<< \oplus_{y \in X^{(2)}}\underline{K}^{(i)}_{m-2}(O_{X,y} \ on \ y) \\
     @VVV @VVV @VVV\\
      \dots @<tan<< \dots @<<< \dots \\ 
     @VVV @VVV @VVV\\
     \oplus_{x\in X^{(n)}}\underline{H}_{x}^{n}(\Omega_{O_{X}/ \mathbb{Q}}^{\bullet,(i)}) @<tan(n+2)<< \oplus_{x[\varepsilon]\in X[\varepsilon]^{(n)}}\underline{K}^{(i)}_{m-n}(O_{X,x}[\varepsilon] \ on \ x[\varepsilon]) @<<<  \oplus_{x \in X^{(n)}}\underline{K}^{(i)}_{m-n}(O_{X,x} \ on \ x) \\
     @VVV @VVV @VVV\\
      0 @. 0 @. 0
  \end{CD}
\]
}
where 
\begin{equation}
\begin{cases}
 \begin{CD}
 \Omega_{O_{X}/ \mathbb{Q}}^{\bullet,(i)}= \Omega^{{2i-m+1}}_{O_{X}/ \mathbb{Q}}, for \  \frac{m-1}{2}  < \ i \leq m-1.\\
  \Omega_{O_{X}/ \mathbb{Q}}^{\bullet,(i)}= 0, else.
 \end{CD}
\end{cases}
\end{equation} 
\end{theorem}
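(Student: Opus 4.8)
The plan is to carry out the proof of Theorem 5.6 in a $\psi^k$-equivariant manner and then to split every object into Adams eigenspaces. The Adams operations of Section 4.2 --- extended to the negative K-groups with support by Weibel's descent argument --- are natural transformations of $\mathcal{K}$ (and, after $\otimes\mathbb{Q}$, of $\mathcal{HN}$) regarded as a ``cohomology theory with support'': in particular they are compatible with the open-immersion pullbacks and \'etale-excision isomorphisms that build the coniveau exact couple, hence with every differential $d_1^{p,q}$ of the coniveau spectral sequence and, the eigenspace projectors being $\mathbb{Q}$-linear idempotents, with Zariski sheafification. Consequently the flasque resolution of Theorem 5.6 is a direct sum, over the Adams weights $i$, of subcomplexes of flasque sheaves, and each summand is again a flasque resolution, being a $\mathbb{Q}$-linear retract of the exact resolution of Theorem 5.6; in particular one does not need to re-prove effacement for the individual eigen-functors $\mathcal{K}^{(i)}$. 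This produces the $\psi^k=k^i$-eigencomponent of the right-hand column.

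Next I would refine the two ingredients used in the proof of Theorem 5.2. For the Chern-character square, Corollary 3.16 together with the Cathelineau-type results --- that the relative Chern character respects Adams operations (Theorems 4.7, 4.8, 4.12), and the splitting fibrations $\mathcal{K}^{(i)}(O,\varepsilon)\to\mathcal{K}(O,\varepsilon)\to\prod_{j\ne i}\mathcal{K}^{(j)}(O,\varepsilon)$ with their $\mathcal{HN}$-analogue from [6], applied inside each $\mathbb{H}_Y(X,-)$ --- gives, for each $i$, a commutative diagram of flasque resolutions relating the weight-$i$ pieces $\underline{\mathcal{K}}^{(i)}$ and $\underline{\mathcal{HN}}^{(i)}$ of the sheafified Gersten sequences on $X[\varepsilon]$. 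For the projection $Pr_i$ of Lemma 5.3, the point is that it is exactly the identification of the relative group $HN_p(O_{X,x}[\varepsilon]\text{ on }x[\varepsilon],\varepsilon)$ with a local cohomology of differential forms, which in its finer form (Theorem 4.14) carries the weight-$i$ summand $HN^{(i)}_p(O_{X,x}[\varepsilon]\text{ on }x[\varepsilon],\varepsilon)$ isomorphically onto $H^j_x(\Omega^{\bullet,(i)}_{O_{X,x}/\mathbb{Q}})$; hence $Pr_i$ restricts to a morphism of weight-$i$ subcomplexes, yielding the eigencomponent of the left-hand column together with its flasque-resolution property.

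With these pieces in hand the diagram of Theorem 5.7 is assembled exactly as Theorems 5.2 and 5.6 were: one composes the weight-$i$ Chern-character square with the weight-$i$ projection to define the tangent maps $tan\,i = Pr_i\circ Ch$ on eigenspaces, and identifies the kernel of each $\varepsilon=0$ map in the middle column with the term $\bigoplus_{x\in X^{(j)}}\underline{H}^j_x(\Omega^{\bullet,(i)}_{O_X/\mathbb{Q}})$ of the Cousin resolution by means of the finer local computation, Corollary 4.15 (the ``Moreover'' part), applied at a codimension-$j$ point with $m$ replaced by $m-j$, after reconciling the Adams-weight indexing coming from $HN^{(i)}_\bullet\cong HC^{(i-1)}_{\bullet-1}$ of Section 4.1 with the form degree $2i-m+1$ appearing in the statement. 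Summing over $i$ recovers the diagram of Theorem 5.6, so each eigencomponent diagram is a direct summand of it; in particular each column is a flasque resolution.

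The step I expect to be the main obstacle is the compatibility bookkeeping, on two fronts. First, one must verify carefully that the $\psi^k$ built on negative K-theory with support in Section 4.2 is genuinely natural for all the maps entering the coniveau spectral sequence, so that it descends to the Cousin complexes of Zariski sheaves and commutes with their differentials; this is what legitimises decomposing the whole resolution into eigencomponents. Second, one must pin down, once and for all, the precise range of Adams weights and the exact shift relating the weight in $K^{(i)}$, in $HN^{(i)}$, in $HC^{(i-1)}$, and the form $\Omega^{2i-m+1}_{O_X/\mathbb{Q}}$ of the statement, so that the local eigenspace computations of Theorem 4.14 / Corollary 4.15 match the stated term $\Omega^{\bullet,(i)}_{O_X/\mathbb{Q}}$ in every codimension. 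Granting these, the remainder follows formally from the equivariant forms of results already established.
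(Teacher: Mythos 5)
Your proposal is correct and follows essentially the same route as the paper, which deduces this theorem from the eigencomponent computation of the relative $K$-groups with support (Corollary 4.15, via the Cathelineau-type refinement of Goodwillie's isomorphism) together with Weibel's extension of $\psi^k$ to negative $K$-groups, and then repeats the argument of the non-equivariant theorem. In fact you supply more detail than the paper does --- in particular the naturality of $\psi^k$ with respect to the maps building the coniveau spectral sequence and the reconciliation of the Adams-weight shift between $K^{(i)}$, $HN^{(i)}$ and $HC^{(i-1)}$ --- both of which the paper leaves implicit.
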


Moreover, we have the following fact by using similiar methods:

\begin{theorem}
There exists the following splitting commutative diagram($m$ and $j$ can be any integer, $X_{j}$ is the j-th infinitesimal thickening, each column is a flasque resolution):
{\footnotesize
\[
  \begin{CD}
     0 @. 0 @. 0\\
     @VVV @VVV @VVV\\
     (\Omega_{O_{X}/ \mathbb{Q}}^{\bullet})^{\oplus j} @<tan1<< K_{m}(O_{X_{j}}) @<<< K_{m}(O_{X}) \\
     @VVV @VVV @VVV\\
     (\Omega_{k(X)/ \mathbb{Q}}^{\bullet})^{\oplus j} @<tan2<<  K_{m}(k(X)_{j}) @<<< K_{m}(k(X)) \\
     @VVV @VVV @VVV\\
     \oplus_{d \in X^{(1)}}\underline{H}_{d}^{1}((\Omega_{O_{X}/\mathbb{Q}}^{\bullet})^{\oplus j} @<tan3<< \oplus_{d_{j} \in X_{j} ^{(1)}}\underline{K}_{m-1}(O_{X_{j},d_{j}} \ on \ d_{j}) @<<<  \oplus_{d \in X^{(1)}}\underline{K}_{m-1}(O_{X,d} \ on \ d)\\
     @VVV @VVV @VVV\\
     \oplus_{y \in X^{(2)}}\underline{H}_{y}^{2}((\Omega_{O_{X}/ \mathbb{Q}}^{\bullet})^{\oplus j} @<tan4<< \oplus_{y_{j} \in X_{j} ^{(2)}}\underline{K}_{m-2}(O_{X_{j},y_{j}} \ on \ y_{j}) @<<< \oplus_{y \in X^{(2)}}\underline{K}_{m-2}(O_{X,y} \ on \ y) \\
     @VVV @VVV @VVV\\
      \dots @<tan<< \dots @<<< \dots \\ 
     @VVV @VVV @VVV\\
     \oplus_{x\in X^{(n)}}\underline{H}_{x}^{n}((\Omega_{O_{X}/ \mathbb{Q}}^{\bullet})^{\oplus j} @<tan(n+2)<< \oplus_{x_{j} \in X_{j} ^{(n)}}\underline{K}_{m-n}(O_{X_{j},x_{j}} \ on \ x_{j}) @<<<  \oplus_{x \in X^{(n)}}\underline{K}_{m-n}(O_{X,x} \ on \ x) \\
     @VVV @VVV @VVV\\
      0 @. 0 @. 0
  \end{CD}
\]
}
where
\begin{equation}
\begin{cases}
 \begin{CD}
 \Omega_{O_{X}/ \mathbb{Q}}^{\bullet} = \Omega^{m-1}_{O_{X}/\mathbb{Q}}\oplus \Omega^{m-3}_{O_{X}/\mathbb{Q}}\oplus \dots\\
  \Omega_{k(X)/ \mathbb{Q}}^{\bullet} = \Omega^{m-1}_{k(X)/\mathbb{Q}}\oplus \Omega^{m-3}_{k(X)/\mathbb{Q}}\oplus \dots
 \end{CD}
\end{cases}
\end{equation} 
 
\end{theorem}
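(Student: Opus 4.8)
The plan is to run the argument of the preceding two theorems (the case $j=1$ of dual numbers), systematically replacing $k[\varepsilon]=k[t]/(t^{2})$ by the truncated polynomial algebra $R_{j}:=k[t]/(t^{j+1})$ and the nilpotent ideal $\varepsilon$ by the sheaf of nilpotent ideals $I_{j}:=t\cdot O_{X}\otimes_{k}R_{j}$ (so $I_{j}^{\,j+1}=0$). Since $X_{j}=X\times T_{j}$ is a nilpotent thickening, it has the same underlying space as $X$, the codimension-$p$ points correspond bijectively, and $O_{X_{j},x_{j}}=O_{X,x}\otimes_{k}R_{j}$ for $x\in X^{(p)}$. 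First I would invoke universal exactness (corollary 3.12) with $T=T_{j}$: both $X\mapsto\mathcal K(X\times T_{j})$ and $X\mapsto\mathcal{HN}(X\times T_{j})$ inherit \'etale excision and the projective bundle formula, hence are effaceable on smooth schemes; this already yields the flasque resolution property of $\mathcal G_{j}$ and, exactly as in corollary 3.16, a Chern-character morphism between the exact sheafified Gersten sequences of $X_{j}$ built from $K$ and from $HN$, together with the analogous map for the subsequence built over $O_{X}$.

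The only genuinely new input is the local computation. Because $I_{j}$ is nilpotent and everything is taken over $\mathbb Q$, the space-level Goodwillie--Cathelineau theorem of Corti\~nas--Haesemeyer--Weibel ([6]) applies verbatim with $\varepsilon$ replaced by $I_{j}$, so $\mathcal K(O,I_{j})\simeq\mathcal{HN}(O,I_{j})$ and $\mathcal K^{(i)}(O,I_{j})\simeq\mathcal{HN}^{(i)}(O,I_{j})$; applying $\mathbb H_{Y}(X,-)$ and passing to homotopy groups, as in theorem 4.15, gives $K_{n}(X_{j}\ \text{on}\ Y_{j},I_{j})=HN_{n}(X_{j}\ \text{on}\ Y_{j},I_{j})$ compatibly with Adams weights. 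It remains to compute the right-hand side via the depth argument of section 4.3: for $y\in X^{(p)}$ the local ring $O_{X,y}$ is regular of dimension $p$, so only $H^{p}_{y}$ of the relative negative cyclic complex survives, and the needed input is the $R_{j}$-analogue of corollary 4.1.4, namely
\[
HN_{n}\bigl(R[t]/(t^{j+1}),\,(t)\bigr)\;\cong\;\bigl(\Omega^{n-1}_{R/\mathbb Q}\oplus\Omega^{n-3}_{R/\mathbb Q}\oplus\cdots\bigr)^{\oplus j}
\]
for a smooth $\mathbb Q$-algebra $R$, refined in each Adams weight $i$ to $(\Omega^{2i-n-1}_{R/\mathbb Q})^{\oplus j}$ in the appropriate range. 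This follows from the classical computation of the cyclic homology of truncated polynomial algebras in characteristic $0$ (Geller--Weibel; see also [35]) together with the K\"unneth isomorphism for the Hochschild, hence relative cyclic, complex of $R\otimes_{\mathbb Q}k[t]/(t^{j+1})$. Feeding this into the depth spectral sequence gives, for $y\in X^{(p)}$,
\[
K_{m-p}\bigl(O_{X,y}[t]/(t^{j+1})\ \text{on}\ y_{j},\,I_{j}\bigr)=H^{p}_{y}\bigl((\Omega^{\bullet}_{O_{X,y}/\mathbb Q})^{\oplus j}\bigr),\qquad \Omega^{\bullet}_{O_{X,y}/\mathbb Q}=\Omega^{m-1}_{O_{X,y}/\mathbb Q}\oplus\Omega^{m-3}_{O_{X,y}/\mathbb Q}\oplus\cdots,
\]
together with its Adams-graded refinement, exactly as in the corollary closing section 4.

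Finally I would assemble the diagram. Setting $tani:=Pr_{i}\circ Ch$, where $Pr_{i}$ is the natural projection onto the $HN$-Gersten sequence followed by the identification above, the right-hand squares commute by functoriality of the Chern character and naturality of the coniveau spectral sequence (corollaries 3.12, 3.16), and the left-hand squares commute because $Pr_{i}$ is induced on coniveau $E_{1}$-pages by the natural projection of spectra onto the relative theory $\mathcal{HN}(O,I_{j})$. Each column is a flasque resolution by the effacement theorem (theorem 3.6, corollary 3.12), the left column being the Cousin resolution of $(\Omega^{\bullet}_{O_{X}/\mathbb Q})^{\oplus j}$, which is a resolution because each $\Omega^{r}_{O_{X,y}/\mathbb Q}$ has depth $\dim O_{X,y}$ (as in the proof in section 4.3). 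Exactness of the first three columns and the identification of $\ker(\mathcal G_{j}\to\mathcal G_{0})$, the $t=0$ reduction, with the Cousin complex of $(\Omega^{\bullet}_{O_{X}/\mathbb Q})^{\oplus j}$ then follow termwise from the local computation, since the horizontal kernels are computed stalkwise. I expect the main obstacle to be the cyclic homology computation for $R\otimes_{\mathbb Q}k[t]/(t^{j+1})$: one must verify not only the abstract isomorphism with $j$ copies of the dual-number answer but also that this splitting is natural in $R$ and compatible with localization, so that it sheafifies correctly and the Gersten differentials of the middle column descend to those of the Cousin complex. Granting that, the remaining steps are a formal transcription of sections 3 and 4 with $\varepsilon$ replaced by $I_{j}$.
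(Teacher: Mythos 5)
Your proposal is correct and follows essentially the same route as the paper, which offers no written argument for this theorem beyond the phrase ``by using similar methods'': namely, rerun the $\varepsilon$-case machinery with $T=T_{j}$ (effaceability and universal exactness from corollary 3.12, the Chern character between the sheafified Gersten sequences, the Corti\~nas--Haesemeyer--Weibel identification for the nilpotent ideal $I_{j}$, and the depth computation of section 4.3 applied to the relative negative cyclic homology of the truncated polynomial algebra). Your explicit identification of the one genuinely new input --- the computation $HN_{n}(R[t]/(t^{j+1}),(t))\cong(\Omega^{n-1}_{R/\mathbb{Q}}\oplus\Omega^{n-3}_{R/\mathbb{Q}}\oplus\cdots)^{\oplus j}$ and its naturality --- is a useful elaboration that the paper omits entirely.
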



\begin{thebibliography}{10}




\bibitem{A-LJ}
B.Ang\'eniol and M.Lejeune-Jalabert, \emph{Calcul diff\'erentiel et classes caract\'eristiques en g\'eom\'etrie alg\'ebrique}, (French) [Differential calculus and characteristic classes in algebraic geometry] With an English summary, Travaux en Cours [Works in Progress], 38. Hermann, Paris, 1989.

\bibitem{B}
 P.Balmer,  \emph{Niveau spectral sequences on singular schemes and failure of generalized Gersten  conjecture}, Proceedings of the American Mathematical Society 137, no 1 (2009), pp. 99-106.

\bibitem{C}
 J.-L.Cathelineau, \emph{Lambda structures in Algebraic K-theory and Cyclic Homology}, K-theory 4 (1991), 591-606.
\bibitem{CHW}
G.Corti$\tilde{n}$as, C.Haesemeyer and C.Weibel, \emph{K-regularity, cdh-fibrant Hochschild homology, and a conjecture of Vorst}, J. AMS 21 (2007), 547-561.
\bibitem{CHSW}
G.Corti$\tilde{n}$as, C.Haesemeyer, M.Schlichting and C.Weibel, \emph{Cyclic homology,cdh-cohomology and negative k-theory}, Annals of Math. 167 (2008), 549-563.
\bibitem{CHW}
G.Corti$\tilde{n}$as, C.Haesemeyer and C.Weibel, \emph{Infinitesimal cohomology and the Chern character
to negative cyclic homology}, Math. Annalen 344 (2009), 891-922.

\bibitem{CTHK}
J.L.Colliot-Th\'el\`ene, R.T.Hoobler and B.Kahn, \emph{The Bloch-Ogus-Gabber theorem}, Algebraic K -theory (Toronto, ON, 1996), 31–94, Fields Inst. Commun., 16, Amer. Math. Soc., Providence, RI, 1997. 

\bibitem{G}
T.Goodwillie, \emph{Relative algebraic K-theory and cyclic homology}, Ann. of Math. 124 (1986), 347-402.


\bibitem{GW}
S.Geller and C.Weibel, \emph{Hodge Decompositions of Loday symbols in K-theory and cyclic homology}, K-theory 8 (1994), 587-632.





\bibitem{G-G}
M.Green and P.Griffiths, \emph{On the Tangent space to the space of algebraic cycles on a smooth algebraic variety}, Annals of Math Studies, 157. Princeton University Press, Princeton, NJ, 2005, vi+200 pp. ISBN: 0-681-12044-7.

\bibitem{GS}
H.Gillet and C.Soul\'e, \emph{Intersection theory using Adams operations}, Inventiones mathematicae.90, pages 243-277. Springer-Verlag, 1987.

\bibitem{GS}
H.Gillet and C.Soul\'e, \emph{Filtrations on Higher K-theory}, In Algebraic K-theory volume 67 of Proc. Symp. Pure Math., pages 89-148. AMS, 1999.
\bibitem{Gr-1}
D.Grayson, \emph{Exterior power operations on algebraic K-theory},  K-theory, volume 3, 1989, pages 247-260.
\bibitem{Gr-2}
D.Grayson, \emph{Adams operations on higher K-theory},  K-theory, volume 6, 1992, pages 97-111.
\bibitem{Gr-3}
D.Grayson, \emph{Universal exactness in algebraic K-theory}, Journal of Pure and Applied Algebra, volume 36, 1985, pages 139-141.

\bibitem{H-1}
R.Hartshorne,  \emph{Residues and duality}, Lecture Notes in Mathematics, No. 20 Springer-Verlag, Berlin-New York 1966 vii+423 pp.

\bibitem{He}
 L.Hesselholt, \emph{K-theory of truncated polynomial algebras}, Handbook of K-theory, vol. 1, pp. 71-110, Springer-Verlag, Berlin, 2005.

\bibitem{K-1}
B.Keller, \emph{Invariance and localization for cyclic homology of dg-algebras}, Journal of Pure and
Applied Algebra, 123:223-273, 1998.
\bibitem{K-2}
B.Keller, \emph{ On the cyclic homology of ringed spaces and schemes}, Documenta Mathematica,
3:231-259, 1998.
\bibitem{K}
B.Keller, \emph{On the cyclic homology of exact categories}, Journal of Pure and Applied Algebra,
136:1-56, 1999.
\bibitem{L}
 J.-L.Loday, \emph{Cyclic homology}, volume 301 of Grundlehren der Mathematischen Wissenschaften
[Fundamental Principles of Mathematical Sciences]. Springer-Verlag, Berlin,
1992. Appendix E by Maria O. Ronco.
\bibitem{M}
R.McCarthy, \emph{The cyclic homology of an exact category}, Journal of Pure and Applied
Algebra, 93:251-296, 1994.
\bibitem{m08}
 M.Kerz,  \emph{Milnor K-theory of local rings}, Thesis.

\bibitem{S}
C.Soul\'e, \emph{Op\'erations en K-th\'eorie alg\'ebrique}, Canad. J. Math. 37 (1985), 488-550.
\bibitem{S-1}
 M.Schlichting, \emph{Higher algebraic K-theory(after Quillen, Thomason and others)}, Topics in Algebraic and Topological K-theory. Springer Lecture Notes in Math. 2008 (2011), 167-242.
\bibitem{S-2}
 M.Schlichting,	\emph{Delooping the K-theory of exact categories}, Topology 43 (2004), no. 5, 1089 - 1103.
\bibitem{S-3}
 M.Schlichting,	\emph{Negative K-theory of derived categories}, Math. Z. 253 (2006), no. 1, 97 - 134.
 \bibitem{S-4}
 M.Schlichting,	\emph{Lecture notes given in Spring 2010}.
\bibitem{T}
R.W.Thomason, \emph{Algebraic K-theory and ´etale cohomology}, Annales Sc. ´ Ec. Norm. Sup.
(Paris), 18:437-552, 1985.

\bibitem{TT}
R.W.Thomason and T.Trobaugh, \emph{Higher algebraic K-theory of schemes and of derived
categories}, In The Grothendieck Festschrift, Volume III, volume 88 of Progress in Math.,
pages 247-436. Birkh¨auser, Boston, Bas´el, Berlin, 1990.
\bibitem{W1}
C.Weibel, \emph{K-book online project}.
\bibitem{W2}
 C.Weibel, \emph{Nil K-theory maps to cyclic homology}, Trans. AMS, 303
(1987), 541-558.
\bibitem{W-3}
 C.Weibel, \emph{Pic is a contracted functor}, Invent. Math. 103 (1991), 351-377.
\bibitem{W-6}
 C.Weibel, \emph{Cyclic homology for schemes}, Proc. AMS 124 (1996), 1655-1662.

\bibitem{Y}
S.Yang, \emph{Higher algebraic K-theory and tangent spaces to Chow groups},  thesis.



\end{thebibliography}
\end{document}